\newtheorem{remark}[theorem]{Remark}
\newcommand{\R}{{\mathbb R}}
\newcommand{\Z}{{\mathbb Z}}
\newcommand{\C}{{\mathbb C}}
\newcommand{\be}{\begin{eqnarray}}
\newcommand{\ben}{\begin{eqnarray*}}
\newcommand{\en}{\end{eqnarray}}
\newcommand{\enn}{\end{eqnarray*}}
\newcommand{\ba}{\backslash}
\newcommand{\pa}{\partial}
\newcommand{\ov}{\overline}
\newcommand{\ggrad}{{\rm grad\,}}        
\newcommand{\ddiv}{{\rm div\,}}
\newcommand{\g}{\gamma}
\newcommand{\G}{\Gamma}
\newcommand{\eps}{\epsilon}
\newcommand{\Om}{\Omega}
\newcommand{\om}{\omega}
\newcommand{\la}{\lambda}
\newcommand{\wi}{\widetilde}
\newcommand{\hx}{\hat{x}}
\title{Inverse elastic scattering problems with phaseless far field data}
\author{Xia Ji\thanks{LSEC, Academy of Mathematics and Systems Science, Chinese Academy of Sciences, Beijing 100190, China ({\tt jixia@lsec.cc.ac.cn}).}
\and
Xiaodong Liu\thanks{Institute of Applied Mathematics, Academy of Mathematics and Systems Science, Chinese Academy of Sciences, 100190 Beijing, China({\tt xdliu@amt.ac.cn}).}
}
\begin{document}

\maketitle

\begin{abstract}
This paper is concerned with uniqueness, phase retrieval and shape reconstruction methods for inverse elastic scattering problems with phaseless far field data.
The phaseless far field data is closely related to the outward energy flux, which is easily measured in practice.  Systematically, we study two basic models, i.e., inverse scattering of plane waves by rigid bodies and inverse scattering of sources with compact support. For both models, we show that the phaseless far field data is invariant under translation of the underlying scattering objects, which implies that the location of the objects can not be uniquely recovered by the data.
To solve this problem, we consider simultaneously the incident point sources with one fixed source point and at most three scattering strengths. With this technique, we establish some uniqueness results for source scattering problem with multi-frequency phaseless far field data. Furthermore, a fast and stable phase retrieval approach is proposed based on a simple geometric result which provides a stable reconstruction of a point in the plane from three distances to given points. Difficulties arise for inverse scattering by rigid bodies due to the additional unknown far field pattern of the point sources. To overcome this difficulty, we introduce an artificial rigid body into the system and show that the underlying rigid bodies can be uniquely determined by the corresponding phaseless far field data at a fixed frequency. Noting that the far field pattern of the scattered field corresponding to point sources is very small if the source point is far away from the scatterers, we propose an appropriate phase retrieval method for obstacle scattering problems, without using the artificial rigid body. Finally, we propose several sampling methods for shape reconstruction with phaseless far field data directly. For inverse obstacle scattering problems, two different direct sampling methods are proposed with data at a fixed frequency. For inverse source scattering problems, we introduce two direct sampling methods for source supports with sparse multi-frequency data.  The phase retrieval techniques are also combined with the classical sampling methods for the shape reconstructions.
Extended numerical examples in two dimensions are conducted with noisy data, and the results further verify the effectiveness and robustness of
the proposed phase retrieval techniques and sampling methods.

\vspace{.2in} {\bf Keywords:}
Elastic scattering, phaseless far field data, uniqueness, phase retrieval, direct sampling methods.

\vspace{.2in} {\bf AMS subject classifications:}
35P25, 45Q05, 78A46, 74B05

\end{abstract}
\section{Introduction}
The inverse wave scattering problems are to determine the unknown objects using the prescribed radiated wave fields.
These problems are mainly motivated by a lot of practical applications. For example, waves are transmitted through the earth to detect oil or gas and to study the earth's geological structure, waves are also used on the human body for medical diagnosis and therapy.
The measurement of the scattering of waves has been developed into an effective engineering technique for the nondestructive evaluation of materials for detection of potentially dangerous defects as cracks.
A survey on the state of the art of the mathematical theory and numerical approaches for inverse time harmonic acoustic and electromagnetic scattering problems can be found in the standard monograph \cite{CK}.
The inverse elastic wave scattering is more challenging due to the coupling of compressional waves and shear waves that propagate at different speeds.
We refer to \cite{GintidesSini, Hahner98, HahnerHsiao, HuKirschSini} for the uniqueness and stability theory. Further, several numerical approaches have been developed for shape or source reconstruction \cite{Arens2001,AlvesKress2002,BaoChenLi,BaoHuSunYin,ChenHuang,GuzinaChikichev,HuKirschSini,HuLiLiu,HuLiLiuSun,HuLiLiuWang,HuLiLiuWang,JiLiuXi,LiWangWangZhao,TYG, Sevroglou}.
For the readers interested in a more comprehensive treatment of the direct and inverse elastic scattering problems, we suggest consulting \cite{BonnetConstantinescu, DassiosKiriakiPolyzos, Kupradze1965, Kupradze1979}  on this subject.

The two well known difficulties of the inverse scattering problems are nonlinearity and ill-posedness. Actually, in many cases of practical interest, the third difficulty is incomplete data, i.e., only partial information can be measured directly. There are two cases of incomplete data. The first one is the limited-aperture data, where the measurements are only available in limited directions or positions. Limited-aperture data can present a severe challenge for the reconstructions \cite{JiLiuXi, LiWangWangZhao}. Recently, some data recovery techniques have been developed to obtain the full-aperture data by combining data and models \cite{JiLiuXi,LiuSun17}.
The second one is the phaseless data due to the fact that the phase information is difficult or even impossible to be captured.
Many works with phased far field patterns rely heavily on the fact that, by Rellich's lemma, the radiating waves and their far field patterns are one-to-one.
Unfortunately, the corresponding result is not available for the modulus of the far field patterns. Actually, it is well known that the modulus of the far field pattern is invariant under the translation of underlying objects \cite{KR97,KR99}. Thus, the location of the underlying objects can not be uniquely determined. We refer to \cite{AmmariChowZou,ChengHuang-phaseless,DZG,Klibanov14,Klibanov17,KlibanovRomanov17,KlibanovRomanov17-SIAM,KR97,KR99,LZball,XZZ18,XZZ-SIAM,ZG,ZhangGuoLiLiu,ZZ18,ZZ-ip17,ZZ-jcp17}
for related works on shape reconstruction for phaseless inverse acoustic scattering problems.

In this work, we consider the elastic scattering problems with phaseless far field data. To our best knowledge, this is the first work on uniqueness, phase retrieval and sampling methods for inverse elastic scattering problem with phaseless far field data. We refer to a recent manuscript \cite{DLL} on iterative methods with a reference ball for determining a rigid body using phaseless far field data.
By considering simultaneously the scattering of point sources, we introduce a fast and simple phase retrieval method, and then propose some direct sampling methods with phaseless far field data for shape reconstruction.
This work is a nontrivial extension of the results in \cite{JiLiuZhang-obstacle,JiLiuZhang-source} for the inverse acoustic scattering problem of the Helmholtz equation to the inverse elastic scattering of the Navier equation. The elastic wave equation is more challenging because of the coexistence of compressional waves and shear waves propagating at different speeds. The phaseless acoustic far field data for point sources/scatterers is just the modulus of the strength, which is a constant \cite{JiLiuZhang-obstacle,JiLiuZhang-source}.
However, the phaseless elastic far field data for point sources/scatterers changes for different observation directions, and even vanishes at some special directions. Hence more sophisticated modification is required.

This paper is organized as follows.
In the next section, we introduce the two basic scattering models, i.e., scattering of plane waves by rigid bodies and scattering of sources with compact support.
We show that the phaseless far field data is invariant under translation of the underlying scattering objects, which implies that the location of the underlying scattering objects can not be uniquely determined by the phaseless far field data. To overcome this difficulty, we consider simultaneously the scattering of point sources.
Section \ref{sec:uniqueness} is devoted to uniqueness results with phaseless far field data. Some uniqueness results for inverse source scattering problem have been established using multi-frequency phaseless far field data. For inverse scattering by rigid bodies, with the help of an artificial rigid body, we prove some uniqueness results using phaseless far field data at a fixed frequency.
In Section \ref{sec:phaseretrieval+Shapereconstruction}, we introduce a phase retrieval technique and propose some direct sampling methods using phaseless far field data. Only one kind of phaseless far field data is needed in the numerical schemes, and the artificial rigid body is avoided.
Extended numerical simulations in two dimensions are presented in the last section to indicate the efficiency and robustness of the proposed methods.

In this paper, we aim at conciseness, so that we restrict ourselves to the two dimensional case. Three dimensional case can be dealt with similarly after some modifications.
Our methods proposed are independent of physical properties of the scatterers. However, for simplicity, we restrict our presentation to the case where the scatterer is a rigid body in the obstacle scattering problem. The other cases such as scattering by a cavity or by a penetrable inhomogeneous medium of compact support can be treated analogously.

\section{Elastic scattering problems}
This section is devoted to address the elastic scattering problems.
We begin with the notations used throughout this paper. All vectors will be denoted in bold script.
For a vector $\mathbf{x}:=(x_1, x_2)^{T}\in\R^2$, we introduce the two unit vectors $\hat{\mathbf{x}}:=\mathbf{x}/|\mathbf{x}|$ and $\hat{\mathbf{x}}^{\perp}$ obtained by
rotating $\hat{\mathbf{x}}$ anticlockwise by $\pi/2$.
For simplicity, we write $\pa_i$ for the usual partial derivative $\frac{\pa}{\pa x_i},\,i=1,2$.
Then, in addition to the usual differential operators $\ggrad:=(\pa_1, \pa_2)^T$ and $\ddiv:=(\pa_1, \pa_2)$,
we define two auxiliary differential operators $\ggrad^{\perp}$ and $\ddiv^{\perp}$ by
$\ggrad^{\perp}:= (-\pa_2, \pa_1)^T$ and $\ddiv^{\perp}:= (-\pa_2, \pa_1)$, respectively. It is easy to deduce the differential indentities $\ddiv^{\perp}\ggrad = \ddiv\ggrad^{\perp}=0$.
In general, we shall use the notations $\ggrad_{\mathbf{y}}$ and $\ddiv_{\mathbf{y}}$ to denote gradient and divergence with respect to $\mathbf{y}$, and $ds(\mathbf{y})$ and $d\mathbf{y}$ to remind the reader that the appropriate integrals are with respect to $\mathbf{y}$.
Denote by $\mathbb{S}:=\{\mathbf{x}\in \R^2:|\mathbf{x}|=1\}$ the unit circle in $\R^2$.
Let $\Om\subset\R^2$ be an open and bounded Lipschitz domain such that the exterior $\R^2\ba\ov{\Om}$ of $\Om$ is connected.
Here and throughout the paper we denote $\ov{\Om}$ the closure of the set $\Om$. A confusion with the complex conjugate $\ov{z}$ of $z\in\C$ is not expected.
Let $\omega>0$ be the circular frequency, $\lambda$ and $\mu$ be Lam$\acute{\mbox{e}}$ constants satisfying $\mu>0, \,2\mu+\lambda>0$.
Furthermore, denote by
\ben
k_p:=\om/\sqrt{\la+2\mu}\quad\mbox{and}\quad k_s:=\om/\sqrt{\mu}
\enn
the compressional and shear wave number, respectively.

The two basic problems in classical scattering theory are the scattering of elastic waves by bounded scatterers and of external forces with compact support.
In this section we will present the details on these two problems in Subsections \ref{ESP-O} and \ref{ESP-F}, respectively.
We show that the corresponding phaseless elastic far field data is invariant under translation of the sources/scatterers. Thus, location of the underlying objects can not be uniquely determined from the phaseless elastic far field data. To solve this problem, we consider simultaneously the scattering of point sources.

\subsection{Scattering of plane waves by scatterers}\label{ESP-O}
The first problem is the scattering of elastic waves by a bounded scatterer $\Om$.
As incident fields $\mathbf{u}^{in}$, plane waves are of special interest. The time harmonic elastic plane wave with incident direction $\mathbf{d}\in \mathbb{S}$ is given by
\be\label{plane-waves}
\mathbf{u}^{in}=a_{p}\mathbf{u}_{p}^{in}+a_{s}\mathbf{u}_{s}^{in},\quad a_p,\,a_s\in \C,
\en
where $\mathbf{u}_{p}^{in}:=\mathbf{d}e^{ik_{p}\mathbf{x}\cdot \mathbf{d}}$ is a plane compressional wave and $\mathbf{u}_{s}^{in}:=\mathbf{d}^{\perp}e^{ik_{s}\mathbf{x}\cdot \mathbf{d}}$ is a plane shear wave, respectively.

The propagation of time-harmonic elastic wave equation in an isotropic homogeneous media outside $\Om$ is governed by the reduced Navier equation
\be\label{elastic-O}
\Delta^{\ast} \mathbf{u}_{\Om}+\omega^2 \mathbf{u}_{\Om}=0\quad \mbox{in}\,\,\R^2\ba\ov{\Om}, \quad \Delta^{\ast}:= \mu\Delta +(\lambda+\mu) \ggrad\ddiv,
\en
where $\mathbf{u}_{\Om}$ denotes the total displacement field. For a rigid body $\Om$, the total displacement field $\mathbf{u}_{\Om}$ satisfies the first (Dirichlet) boundary condition
\be\label{Dirichlet}
\mathbf{u}_{\Om}=0\quad\mbox{on}\,\pa \Om.
\en
Straightforward calculations show that the incident plane waves \eqref{plane-waves} is an entire solution of the Navier equation \eqref{elastic-O}.
The scatterer $\Om$ gives rise to a scattered field $\mathbf{u}^{sc}_{\Om}=\mathbf{u}_{\Om}-\mathbf{u}^{in}$, which is also a solution of the Navier equation \eqref{elastic-O}.
In the sequel, for the scattering of an incident plane wave ${\bf u}^{in}_m,m=p,s $, by a rigid body $\Om$, we denote the corresponding scattered field  by $\mathbf{u}^{sc}_{\Om,m}(\mathbf{x}, \mathbf{d}),m=p,s$.
It is well known that the scattered field has a decomposition in the form
\ben
\mathbf{u}^{sc}_{\Om,m}=\mathbf{u}^{sc}_{\Om,mp}+\mathbf{u}^{sc}_{\Om,ms}\quad\mbox{in}\,\,\R^2\ba\ov{\Om},
\enn
where
\ben
\mathbf{u}^{sc}_{\Om,mp}:=-\frac{1}{k^2_p}\ggrad\ddiv\mathbf{u}^{sc}_{\Om,m} \quad\mbox{and}\quad \mathbf{u}^{sc}_{\Om,ms}:=-\frac{1}{k^2_s}\ggrad^{\perp}\ddiv^{\perp}\mathbf{u}^{sc}_{\Om,m},\quad m=p,s,
\enn
are known as the compressional (longitudinal) and shear (transversal) parts of $\mathbf{u}^{sc}_{\Om,m}$, respectively.
It is clear that
\ben
\Delta \mathbf{u}^{sc}_{\Om,mp} +k_p^2\mathbf{u}^{sc}_{\Om,mp}=0,\quad \ddiv^{\perp}\mathbf{u}^{sc}_{\Om,mp}=0 \quad \mbox{in}\,\,\R^2\ba\ov{\Om},\,m=p,s
\enn
and
\ben
\Delta \mathbf{u}^{sc}_{\Om,ms} +k_s^2\mathbf{u}^{sc}_{\Om,ms}=0,\quad \ddiv\mathbf{u}^{sc}_{\Om,ms}=0 \quad \mbox{in}\,\,\R^2\ba\ov{\Om},\,m=p,s.
\enn
To characterize outgoing waves, the scattered fields are required to satisfy the Kupradze's radiation conditions
\be\label{KupradzeRC-O}
\frac{\partial \mathbf{u}_{\Om,mp}^{sc}}{\partial r}-ik_p\mathbf{u}_{\Om,mp}^{sc}=o(r^{-\frac{1}{2}}),\quad
\frac{\partial \mathbf{u}_{\Om,ms}^{sc}}{\partial r}-ik_s\mathbf{u}_{\Om,ms}^{sc}=o(r^{-\frac{1}{2}}),\quad m=p,s
\en
uniformly in all directions $\hat{\mathbf{x}}\in \mathbb{S}$ as $r:=|\mathbf{x}|\rightarrow\infty$.
For the unique solvability of the scattering problems \eqref{elastic-O}-\eqref{KupradzeRC-O} in the space $[H^{1}_{loc}(\R^2\ba\ov{\Om})]^{2}$ we refer to
Kupradze \cite{Kupradze1965} and Li, Wang et al. \cite{LiWangWangZhao}.

It is well known that every radiating solution $\mathbf{u}^{sc}_{\Om,m}$ to the Navier equation has an asymptotic behaviour of the form
\be\label{usasymptotic}
\mathbf{u}^{sc}_{\Om,mp}(\mathbf{x})&=&\frac{k_p}{\om}\frac{e^{i\pi/4}}{\sqrt{8\pi\om}}\frac{e^{ik_p|\mathbf{x}|}}{\sqrt{|{\mathbf{x}}|}}u^\infty_{\Om,mp}(\hat{\mathbf{x}})\hat{\mathbf{x}}+O(|\mathbf{x}|^{-3/2}),\quad \mathbf{|x|}\to \infty,\cr
\mathbf{u}^{sc}_{\Om,ms}(\mathbf{x})&=&\frac{k_s}{\om}\frac{e^{i\pi/4}}{\sqrt{8\pi\om}}\frac{e^{ik_s|\mathbf{x}|}}{\sqrt{|{\mathbf{x}}|}}u^\infty_{\Om,ms}(\hat{\mathbf{x}}){\mathbf{x}}^{\perp}+O(|\mathbf{x}|^{-3/2}), \quad \mathbf{|x|}\to \infty
\en
uniformly in all direction $\mathbf{\hx}\in \mathbb{S}$.
The functions $u^\infty_{\Om,mp}$ and $u^\infty_{\Om,ms}$, known as the compressional and shear far field pattern of $\mathbf{u}^{sc}_{\Om,m}$, respectively, are complex valued analytic functions on $\mathbb{S}$.
The far field patterns admit the following representations (see e.g. (2.29)-(2.30) in \cite{JiLiuXi})
\be
\label{upinfty2}u^\infty_{\Om,mn}\mathbf{(\hat{x},d)}&=& \int_{\pa\Om}\Big\{\mathbf{u}_{n}^{in}(\mathbf{y,-\hat{x}})\cdot\mathbb{T}_{\mathbf{\nu(y)}}\mathbf{u}^{sc}_{\Om,m}\mathbf{(y,d)}\cr
&&\, -[\mathbb{T}_{\mathbf{\nu(y)}}\mathbf{u}_{n}^{in}(\mathbf{y,-\hat{x}})]\cdot\mathbf{u}^{sc}_{\Om,m}\mathbf{(y,d)}\Big\}ds(\mathbf{y}),\quad \mathbf{\hx,d}\in \mathbb{S}, \,m,n=p,s.\quad
\en
Here, for a curve $\G$, $\mathbb{T}_{\nu}$ is the surface traction operator defined by
\ben
\mathbb{T}_{\nu}:=2\mu\nu\cdot\ggrad+\la\nu\ddiv-\mu\nu^{\perp}\ddiv^{\perp}
\enn
in terms of the exterior unit normal vector $\nu$ on $\G$.

Throughout this paper, we will denote the pair of far field patterns
\ben
[u^\infty_{\Om,mp}(\hat{\mathbf{x}}, \mathbf{d});\, u^\infty_{\Om,ms}(\hat{\mathbf{x}}, \mathbf{d})]
\enn
by $\mathbf{u}^\infty_{\Om,m}(\hat{\mathbf{x}}, \mathbf{d}),\,m=p,s$,
indicating the dependence on the observation direction $\hat{\mathbf{x}}\in \mathbb{S}$ and
the incident direction $\mathbf{d}\in \mathbb{S}$.
In our recent work \cite{JiLiuXi}, we have shown that the far field patterns satisfy the reciprocity relation
\be\label{ReciprocityRelations}
u^{\infty}_{\Om, mn}({\mathbf{\hat{x},d}})=u^{\infty}_{\Om, nm}({\mathbf{-d, -\hat{x}}}), \mathbf{\hat{x},d}\in \mathbb{S},\,m,n=p,s.
\en
The classical inverse elastic scattering problem is to determine the scatterer $\Om$ from partial or full far field data $\mathbf{u}^\infty_{\Om,m}(\hat{\mathbf{x}}, \mathbf{d})$, $\mathbf{\hx,d}\in \mathbb{S}, \,m=p,s$.
A wealth of theory and numerical results for such an inverse problem is now available. We refer to \cite{HahnerHsiao, HuKirschSini,GintidesSini} for the uniqueness results. The numerical methods can be found in \cite{AlvesKress2002,Arens2001,HuKirschSini,JiLiuXi,Sevroglou}.

In many applications we have only the modulus of the far field data, while the phase information is difficult to be captured.
Actually, the phaseless far field data is closely related to the outward energy flux. In the case of time harmonic case, the outward energy flux $J_{mr},m=p,s,$ through a circle $\pa B_r$ with radius $r$ centered at the origin is given by
\ben
J_{mr}:=-4\om\Im\int_{\pa B_r}\mathbf{u}^{sc}_{\Om,m}\cdot \ov{\mathbb{T}_{\nu}\mathbf{u}^{sc}_{\Om,m}}ds, \quad m=p,s.
\enn
Here, we take $r$ large enough such that $\Om$ is contained in the interior of $\pa B_r$.
We refer to the corresponding concept for acoustic wave in \cite{Kr1}. \\
\begin{theorem}
\ben
J_{mr}=\int_{\mathbb{S}}\left\{k_p|u^{\infty}_{\Om,mp}|^2+k_s|u^{\infty}_{\Om,ms}|^2\right\}ds,\ m=p,s.
\enn
\end{theorem}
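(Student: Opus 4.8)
The plan is to first show that $J_{mr}$ does not depend on $r$ (for every $r$ large enough that $\ov{\Om}\subset B_r$), and then to evaluate it in the limit $r\to\infty$ by inserting the far field asymptotics \eqref{usasymptotic}. For the $r$-independence I would apply Betti's first identity (the elastic Green's formula) to $\mathbf{u}^{sc}_{\Om,m}$ paired with its complex conjugate over the annulus $B_{r_2}\ba\ov{B_{r_1}}$ with $r_1<r_2$. Since $\Delta^{\ast}\mathbf{u}^{sc}_{\Om,m}=-\om^2\mathbf{u}^{sc}_{\Om,m}$ in the exterior, the volume contribution reduces to $-\om^2\int|\mathbf{u}^{sc}_{\Om,m}|^2$ together with the strain-energy density $2\mu\,|\epsilon(\mathbf{u}^{sc}_{\Om,m})|^2+\la\,|\ddiv\mathbf{u}^{sc}_{\Om,m}|^2$, both of which are real. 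Taking imaginary parts therefore annihilates the entire volume contribution and leaves $\Im\int_{\pa B_{r_2}}\mathbf{u}^{sc}_{\Om,m}\cdot\ov{\mathbb{T}_{\nu}\mathbf{u}^{sc}_{\Om,m}}\,ds=\Im\int_{\pa B_{r_1}}\mathbf{u}^{sc}_{\Om,m}\cdot\ov{\mathbb{T}_{\nu}\mathbf{u}^{sc}_{\Om,m}}\,ds$ (the two spheres carry opposite outward normals), i.e. $J_{mr}$ is independent of $r$.

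It then suffices to compute $\lim_{r\to\infty}J_{mr}$. On $\pa B_r$ the outward normal is $\nu=\hat{\mathbf{x}}$, and I would split $\mathbf{u}^{sc}_{\Om,m}=\mathbf{u}^{sc}_{\Om,mp}+\mathbf{u}^{sc}_{\Om,ms}$. The compressional part is irrotational ($\ddiv^{\perp}\mathbf{u}^{sc}_{\Om,mp}=0$) and, by \eqref{usasymptotic}, radially polarized along $\hat{\mathbf{x}}$, while the shear part is solenoidal ($\ddiv\mathbf{u}^{sc}_{\Om,ms}=0$) and polarized along $\hat{\mathbf{x}}^{\perp}$. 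Using the Kupradze radiation conditions \eqref{KupradzeRC-O} (equivalently, differentiating the leading terms of \eqref{usasymptotic}), only the radial derivative hitting the oscillatory factors $e^{ik_p r}$, $e^{ik_s r}$ survives at order $r^{-1/2}$, and I would establish
\[
\mathbb{T}_{\hat{\mathbf{x}}}\mathbf{u}^{sc}_{\Om,mp}=i(\la+2\mu)k_p\,\mathbf{u}^{sc}_{\Om,mp}+o(r^{-1/2}),\qquad \mathbb{T}_{\hat{\mathbf{x}}}\mathbf{u}^{sc}_{\Om,ms}=i\mu k_s\,\mathbf{u}^{sc}_{\Om,ms}+o(r^{-1/2}),
\]
the discarded traction terms (transverse components of $\ggrad$, the derivative of the $r^{-1/2}$ amplitude, and the curvature of $\hat{\mathbf{x}}$) being of order $r^{-3/2}$.

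Substituting these into $\mathbf{u}^{sc}_{\Om,m}\cdot\ov{\mathbb{T}_{\hat{\mathbf{x}}}\mathbf{u}^{sc}_{\Om,m}}$ produces four terms. The two diagonal ones give $-i(\la+2\mu)k_p\,|\mathbf{u}^{sc}_{\Om,mp}|^2$ and $-i\mu k_s\,|\mathbf{u}^{sc}_{\Om,ms}|^2$, which are purely imaginary; the two mixed ones are proportional to $\mathbf{u}^{sc}_{\Om,mp}\cdot\ov{\mathbf{u}^{sc}_{\Om,ms}}$ and its analogue, whose leading factors point along $\hat{\mathbf{x}}$ and $\hat{\mathbf{x}}^{\perp}$. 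Since $\hat{\mathbf{x}}\cdot\hat{\mathbf{x}}^{\perp}=0$, these cross terms vanish at leading order (they are moreover rapidly oscillating like $e^{i(k_p-k_s)r}$), so only the diagonal contributions persist. Applying $-4\om\,\Im$ and passing to the limit, the arc-length element $ds=r\,d\theta$ on $\pa B_r$ cancels the $r^{-1}$ decay of $|\mathbf{u}^{sc}_{\Om,mp}|^2$ and $|\mathbf{u}^{sc}_{\Om,ms}|^2$; inserting the explicit normalization constants of \eqref{usasymptotic} and using $k_p^2=\om^2/(\la+2\mu)$ and $k_s^2=\om^2/\mu$ collapses the remaining constants to the weights $k_p$ and $k_s$, which gives the asserted identity.

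The step I expect to be the crux is the traction asymptotics above: one must verify that $\mathbb{T}_{\hat{\mathbf{x}}}$ acts on each polarized radiating part simply as multiplication by the scalar $i(\la+2\mu)k_p$ (respectively $i\mu k_s$) up to $o(r^{-1/2})$. This is precisely where the special polarizations enter, because $2\mu\,\hat{\mathbf{x}}\cdot\ggrad$ contributes its full radial derivative, the term $\la\,\hat{\mathbf{x}}\,\ddiv$ survives only for the compressional part, and $-\mu\,\hat{\mathbf{x}}^{\perp}\ddiv^{\perp}$ survives only for the shear part, all angular derivatives being lower order. Once this is in hand, the orthogonality $\hat{\mathbf{x}}\perp\hat{\mathbf{x}}^{\perp}$ disposes of the coupling between the compressional and shear contributions and the rest is routine bookkeeping; note also that the $r$-independence already reduces matters to the leading-order asymptotics, so no uniform estimate in $r$ is required.
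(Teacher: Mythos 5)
Your proposal is correct and follows essentially the same route as the paper: Betti's formula on an annulus with imaginary parts taken to kill the (real) volume terms, followed by insertion of the far field asymptotics and the orthogonality of $\hat{\mathbf{x}}$ and $\hat{\mathbf{x}}^{\perp}$ to discard the cross terms. The only difference is that the traction asymptotics you identify as the crux, $\mathbb{T}_{\hat{\mathbf{x}}}\mathbf{u}^{sc}_{\Om,mp}=i(\la+2\mu)k_p\,\mathbf{u}^{sc}_{\Om,mp}+o(r^{-1/2})$ and $\mathbb{T}_{\hat{\mathbf{x}}}\mathbf{u}^{sc}_{\Om,ms}=i\mu k_s\,\mathbf{u}^{sc}_{\Om,ms}+o(r^{-1/2})$ (which, using $(\la+2\mu)k_p^2=\mu k_s^2=\om^2$, are exactly \eqref{Tusasymptotic}), are derived by you rather than quoted from \cite{Arens2001} as the paper does.
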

\begin{proof}
Let $B_R$ be a disk with radius $R>r$ centered at the origin.
We now apply Betti's formula \cite{Arens2001} in $B_R\ba\ov{B_r}$, with the help of the Navier equation \eqref{elastic-O}, for $m=p,s$, to obtain
\be\label{r-R}
&&\int_{\pa B_r}\mathbf{u}^{sc}_{\Om,m}\cdot \ov{\mathbb{T}_{\nu}\mathbf{u}^{sc}_{\Om,m}}ds\cr
&=&\int_{\pa B_R}\mathbf{u}^{sc}_{\Om,m}\cdot \ov{\mathbb{T}_{\nu}\mathbf{u}^{sc}_{\Om,m}}ds
  -\int_{B_R\ba\ov{B_r}}\Big\{\mathbf{u}^{sc}_{\Om,m}\cdot \Delta^{\ast}\ov{\mathbf{u}^{sc}_{\Om,m}} +\mathcal {E}(\mathbf{u}^{sc}_{\Om,m},\ov{\mathbf{u}^{sc}_{\Om,m}})\Big\}d\mathbf{x}\cr
&=&\int_{\pa B_R}\mathbf{u}^{sc}_{\Om,m}\cdot \ov{\mathbb{T}_{\nu}\mathbf{u}^{sc}_{\Om,m}}ds
  +\int_{B_R\ba\ov{B_r}}\Big\{\om^2\mathbf{u}^{sc}_{\Om,m}\cdot \ov{\mathbf{u}^{sc}_{\Om,m}} -\mathcal {E}(\mathbf{u}^{sc}_{\Om,m},\ov{\mathbf{u}^{sc}_{\Om,m}})\Big\}d\mathbf{x}.
\en
Here, for any two smooth vector functions $\mathbf{v}=(v_1,v_2)^T$ and $\mathbf{w}=(w_1,w_2)^T$,
\ben
\mathcal {E}(\mathbf{v},\mathbf{w}):=\la \ddiv\mathbf{v} \ddiv\mathbf{w}-\mu\ddiv^{\perp}\mathbf{v}\ddiv^{\perp}\mathbf{w}+2\mu\sum_{i,j=1}^{2}\pa_{j}v_i\pa_{j}w_i.
\enn
Taking the imaginary part of the last equation \eqref{r-R} yields
\ben
J_{mr}=-4\om\Im\int_{\pa B_R}\mathbf{u}^{sc}_{\Om,m}\cdot \ov{\mathbb{T}_{\nu}\mathbf{u}^{sc}_{\Om,m}}ds,\ m=p,s.
\enn
By straightforward calculations, we have the asymptotic behaviour \cite{Arens2001} for $m=p,s$
\be\label{Tusasymptotic}
\mathbb{T}_{\mathbf{\hat{x}}}\mathbf{u}^{sc}_{\Om,m}(\mathbf{x})
&=&i\om\frac{e^{i\pi/4}}{\sqrt{8\pi\om}}\frac{e^{ik_p|\mathbf{x}|}}{\sqrt{|{\mathbf{x}}|}}u^\infty_{\Om,mp}(\hat{\mathbf{x}})\hat{\mathbf{x}}\cr
&&+i\om\frac{e^{i\pi/4}}{\sqrt{8\pi\om}}\frac{e^{ik_s|\mathbf{x}|}}{\sqrt{|{\mathbf{x}}|}}u^\infty_{\Om,ms}(\hat{\mathbf{x}}){\mathbf{x}}^{\perp}+O(|\mathbf{x}|^{-3/2}), \quad \mathbf{x}\to \infty.
\en
The proof is now completed by letting $R\rightarrow\infty$ and using \eqref{usasymptotic} and \eqref{Tusasymptotic}.
\end{proof}

We are interested in the inverse problems with phaseless data $|u^{\infty}_{\Om,mn}|,m,n=p,s$.
Unfortunately, the following translation invariance property for the phaseless far field data implies that it is impossible to determine the location of the underlying obstacle, even multiple directions and multiple frequencies are considered.\\

\begin{theorem}\label{TI-O}
Let $\Om_{\mathbf{h}}:=\{\mathbf{y}\in\R^2: \mathbf{y}=\mathbf{z+h}:\;z\in \Om\}$ be the shifted rigid body with a fixed vector $\mathbf{h}\in \R^{2}$.
Then, for any fixed circular frequency $\om>0$,
\be\label{Trans-Invariance}
|u^{\infty}_{\Om_{\mathbf{h}},mn}(\hat{\mathbf{x}},\mathbf{d})| = |u^{\infty}_{\Om,mn}(\hat{\mathbf{x}},\mathbf{d})|, \quad \hat{\mathbf{x}},\mathbf{d}\in\mathbb{S},\,m,n=p,s.
\en
\end{theorem}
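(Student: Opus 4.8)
The plan is to reduce the modulus identity to an exact pointwise translation relation between the scattered field for $\Om_{\mathbf{h}}$ and that for $\Om$, and then simply read off the effect on the far field patterns. For a fixed incident type $m\in\{p,s\}$ (with associated wave number $k_m$) and fixed $\mathbf{d}\in\mathbb{S}$, I would introduce the auxiliary field
\be\label{auxfield}
\mathbf{v}(\mathbf{x}):=e^{ik_m\mathbf{h}\cdot\mathbf{d}}\,\mathbf{u}_{\Om,m}(\mathbf{x}-\mathbf{h},\mathbf{d}),\qquad \mathbf{x}\in\R^2\ba\ov{\Om_{\mathbf{h}}},
\en
and argue that $\mathbf{v}$ is precisely the total field for the shifted obstacle $\Om_{\mathbf{h}}$ under the same incident wave $\mathbf{u}^{in}_m$, so that uniqueness forces $\mathbf{u}_{\Om_{\mathbf{h}},m}=\mathbf{v}$.

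To verify this I would check the defining properties of the shifted scattering problem one by one. Since $\Delta^{\ast}$ has constant coefficients, it commutes with translation, so $\mathbf{v}$ solves the Navier equation \eqref{elastic-O} in $\R^2\ba\ov{\Om_{\mathbf{h}}}$. On the boundary, writing $\mathbf{x}=\mathbf{y}+\mathbf{h}$ with $\mathbf{y}\in\pa\Om$ gives $\mathbf{v}(\mathbf{x})=e^{ik_m\mathbf{h}\cdot\mathbf{d}}\mathbf{u}_{\Om,m}(\mathbf{y},\mathbf{d})=0$, so the Dirichlet condition \eqref{Dirichlet} holds on $\pa\Om_{\mathbf{h}}$. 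The decisive little computation is that the incident wave is reproduced exactly: for $m=p,s$ one has $e^{ik_m\mathbf{h}\cdot\mathbf{d}}\mathbf{u}^{in}_m(\mathbf{x}-\mathbf{h})=\mathbf{u}^{in}_m(\mathbf{x})$, because the phase $e^{ik_m(\mathbf{x}-\mathbf{h})\cdot\mathbf{d}}$ absorbs the prefactor and the polarization vectors $\mathbf{d},\mathbf{d}^{\perp}$ are translation independent. Hence the incident part of $\mathbf{v}$ is $\mathbf{u}^{in}_m$ and its scattered part $e^{ik_m\mathbf{h}\cdot\mathbf{d}}\mathbf{u}^{sc}_{\Om,m}(\cdot-\mathbf{h},\mathbf{d})$ inherits the Kupradze radiation conditions \eqref{KupradzeRC-O}, which are also translation invariant. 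Uniqueness of the forward problem then yields the clean relation $\mathbf{u}^{sc}_{\Om_{\mathbf{h}},m}(\mathbf{x},\mathbf{d})=e^{ik_m\mathbf{h}\cdot\mathbf{d}}\mathbf{u}^{sc}_{\Om,m}(\mathbf{x}-\mathbf{h},\mathbf{d})$; because the Helmholtz projectors $-k_p^{-2}\ggrad\ddiv$ and $-k_s^{-2}\ggrad^{\perp}\ddiv^{\perp}$ are translation invariant, the same relation holds separately for the compressional and shear parts $\mathbf{u}^{sc}_{\Om,mp}$ and $\mathbf{u}^{sc}_{\Om,ms}$.

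It then remains to transfer this to the far field via the asymptotics \eqref{usasymptotic}. Inserting $\mathbf{x}-\mathbf{h}$ into \eqref{usasymptotic} and using the standard expansions $|\mathbf{x}-\mathbf{h}|=|\mathbf{x}|-\hat{\mathbf{x}}\cdot\mathbf{h}+O(|\mathbf{x}|^{-1})$ and $\widehat{\mathbf{x}-\mathbf{h}}\to\hat{\mathbf{x}}$ as $|\mathbf{x}|\to\infty$, the radiating factor produces $e^{ik_n|\mathbf{x}-\mathbf{h}|}=e^{ik_n|\mathbf{x}|}e^{-ik_n\hat{\mathbf{x}}\cdot\mathbf{h}}(1+O(|\mathbf{x}|^{-1}))$ for the output type $n\in\{p,s\}$, while the polarization directions $\hat{\mathbf{x}}$, $\hat{\mathbf{x}}^{\perp}$ and the argument of $u^\infty_{\Om,mn}$ converge with errors that stay within $O(|\mathbf{x}|^{-3/2})$. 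Comparing the resulting leading term with the definition of the far field pattern of $\mathbf{u}^{sc}_{\Om_{\mathbf{h}},m}$ gives
\be\label{ff-shift}
u^{\infty}_{\Om_{\mathbf{h}},mn}(\hat{\mathbf{x}},\mathbf{d})=e^{\,ik_m\mathbf{h}\cdot\mathbf{d}-ik_n\hat{\mathbf{x}}\cdot\mathbf{h}}\,u^{\infty}_{\Om,mn}(\hat{\mathbf{x}},\mathbf{d}),\qquad m,n=p,s,
\en
and since both exponential factors are unimodular, taking $|\cdot|$ yields \eqref{Trans-Invariance}.

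The only genuinely delicate step is the last one: one must expand $|\mathbf{x}-\mathbf{h}|$ and $\widehat{\mathbf{x}-\mathbf{h}}$ carefully enough to extract the unimodular phase $e^{-ik_n\hat{\mathbf{x}}\cdot\mathbf{h}}$ cleanly while confirming that all remaining contributions are absorbed into the $O(|\mathbf{x}|^{-3/2})$ remainder, so that the leading coefficient really is the stated far field pattern. It is also worth emphasizing that the incident phase in \eqref{ff-shift} carries the incident wave number $k_m$ whereas the observation phase carries the output wave number $k_n$; the coexistence of the two speeds is harmless here precisely because only the moduli are compared. Everything else is a consequence of the translation invariance of the Navier operator together with uniqueness of the forward problem.
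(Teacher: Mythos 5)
Your proof is correct, and it arrives at the paper's key intermediate identity --- the exact phase relation $u^{\infty}_{\Om_{\mathbf{h}},mn}(\hat{\mathbf{x}},\mathbf{d})=e^{ik_m\mathbf{h}\cdot\mathbf{d}}e^{-ik_n\mathbf{h}\cdot\hat{\mathbf{x}}}\,u^{\infty}_{\Om,mn}(\hat{\mathbf{x}},\mathbf{d})$, i.e.\ precisely \eqref{uinf=uinfpp}--\eqref{uinf=uinfss} --- but by a partially different route. The first half coincides with the paper: there, too, the relation $\mathbf{u}^{sc}_{\Om_{\mathbf{h}},m}(\mathbf{x},\mathbf{d})=e^{ik_m\mathbf{h}\cdot\mathbf{d}}\mathbf{u}^{sc}_{\Om,m}(\mathbf{x}-\mathbf{h},\mathbf{d})$ is obtained by matching Dirichlet data on $\pa\Om_{\mathbf{h}}$ and invoking uniqueness for radiating solutions (your formulation via the total field and well-posedness of the forward problem is the same argument in different clothing). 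The genuine difference is in how this field relation is pushed to the far field. The paper applies the traction operator to get \eqref{Tu=Tu}, inserts both relations into the boundary representation \eqref{upinfty2}, and changes variables $\mathbf{y}=\mathbf{z}+\mathbf{h}$ in the integral, so the two unimodular factors $e^{ik_m\mathbf{h}\cdot\mathbf{d}}$ and $e^{-ik_n\mathbf{h}\cdot\hat{\mathbf{x}}}$ appear algebraically, with no asymptotic bookkeeping. You instead split the shifted scattered field using the translation-invariant projectors $-k_p^{-2}\ggrad\ddiv$ and $-k_s^{-2}\ggrad^{\perp}\ddiv^{\perp}$ and extract the phase directly from the asymptotics \eqref{usasymptotic}, expanding $|\mathbf{x}-\mathbf{h}|=|\mathbf{x}|-\hat{\mathbf{x}}\cdot\mathbf{h}+O(|\mathbf{x}|^{-1})$ and using smoothness of $u^{\infty}_{\Om,mn}$ on $\mathbb{S}$ to keep all corrections inside the $O(|\mathbf{x}|^{-3/2})$ remainder. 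Your route is more elementary in that it needs only the definition of the far field pattern (uniqueness of the leading asymptotic coefficient) and avoids both the representation formula and the traction computation; it also makes transparent why the incident phase carries $k_m$ while the observation phase carries $k_n$. The price is exactly the delicate expansion you flag, which the paper's change-of-variables argument bypasses entirely. Both arguments are complete, and in either case the conclusion \eqref{Trans-Invariance} follows by taking moduli of the unimodular prefactors.
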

\begin{proof}
We firstly consider the case with $m=p$ and $n=p$, i.e., the compressional far field pattern corresponding to incident plane compressional wave.
For the shifted rigid body $\Om_{\bf h}$, we have
\be\label{u=u}
\mathbf{u}^{sc}_{\Om_{\mathbf{h}},p}(\mathbf{y},\mathbf{d})
&=&-\mathbf{d}e^{ik_p\mathbf{y}\cdot\mathbf{d}}\cr
&=&-e^{ik_p\mathbf{h}\cdot\mathbf{d}}\mathbf{d}e^{ik_p\mathbf{(y-h)}\cdot\mathbf{d}}\cr
&=&e^{ik_p\mathbf{h}\cdot\mathbf{d}}\mathbf{u}^{sc}_{\Om,p}(\mathbf{y-h},\mathbf{d}),\qquad \mathbf{y}\in\pa \Om_{\bf h},\mathbf{d}\in\mathbb{S}.
\en
Note that the radiating solution is completely determined by the boundary value and the fact that $e^{ik_p\mathbf{h}\cdot\mathbf{d}}$ is a constant, we have
\ben
\mathbf{u}^{sc}_{\Om_{\mathbf{h}},p}(\mathbf{y},\mathbf{d})=e^{ik_p\mathbf{h}\cdot\mathbf{d}}\mathbf{u}^{sc}_{\Om,p}(\mathbf{y-h},\mathbf{d}),\quad \mathbf{y}\in\R^2\ba\ov{\Om_{\bf h}},\mathbf{d}\in\mathbb{S}.
\enn
Consequently,
\be\label{Tu=Tu}
\mathbb{T}_{\mathbf{\nu(y)}}\mathbf{u}^{sc}_{\Om_{\mathbf{h}},p}\mathbf{(y,d)}
=e^{ik_p\mathbf{h}\cdot\mathbf{d}}\mathbb{T}_{\mathbf{\nu(y)}}\mathbf{u}^{sc}_{\Om,p}\mathbf{(y-h,d)},\quad\mathbf{y}\in\pa\Om_{\bf h},\mathbf{d}\in\mathbb{S}.
\en
With the representations \eqref{upinfty2}, \eqref{u=u}-\eqref{Tu=Tu}, we have
\be\label{uinf=uinfpp}
u^\infty_{\Om_{\mathbf{h}},pp}\mathbf{(\hat{x},d)}
&=& \int_{\pa\Om_{\mathbf{h}}}\Big\{-\mathbf{\hat{x}}e^{-ik_p\mathbf{y\cdot \hat{x}}}\cdot\mathbb{T}_{\mathbf{\nu(y)}}\mathbf{u}^{sc}_{\Om_{\mathbf{h}},p}\mathbf{(y,d)}\cr
&&\qquad\qquad\qquad\quad +[\mathbb{T}_{\mathbf{\nu(y)}} \mathbf{\hat{x}}e^{-ik_p\mathbf{y\cdot \hat{x}}}]\cdot\mathbf{u}^{sc}_{\Om_{\mathbf{h}},p}\mathbf{(y,d)}\Big\}ds(\mathbf{y})\cr
&=& e^{ik_p\mathbf{h\cdot (d-\hat{x})}}\int_{\pa\Om}\Big\{-\mathbf{\hat{x}}e^{-ik_p\mathbf{z\cdot \hat{x}}}\cdot\mathbb{T}_{\mathbf{\nu(z)}} \mathbf{u}^{sc}_{\Om,p}\mathbf{(z,d)}\cr
&&\qquad\qquad\qquad\quad +[\mathbb{T}_{\mathbf{\nu(z)}} \mathbf{\hat{x}}e^{-ik_p\mathbf{z\cdot \hat{x}}}]\cdot \mathbf{u}^{sc}_{\Om,p}\mathbf{(z,d)}\Big\}ds(\mathbf{z})\cr
&=&  e^{ik_p\mathbf{h\cdot (d-\hat{x})}} u^\infty_{\Om,pp}\mathbf{(\hat{x},d)}, \quad \hat{\mathbf{x}},\mathbf{d}\in\mathbb{S}.
\en
Similarly, we have
\be
\label{uinf=uinfps}
u^\infty_{\Om_{\mathbf{h}},ps}\mathbf{(\hat{x},d)} &=& e^{-ik_s\mathbf{h\cdot\hat{x}}}e^{ik_p\mathbf{h\cdot d}} u^\infty_{\Om,ps}\mathbf{(\hat{x},d)}, \quad \hat{\mathbf{x}},\mathbf{d}\in\mathbb{S},\\
\label{uinf=uinfsp}
u^\infty_{\Om_{\mathbf{h}},sp}\mathbf{(\hat{x},d)} &=& e^{-ik_p\mathbf{h\cdot\hat{x}}}e^{ik_s\mathbf{h\cdot d}} u^\infty_{\Om,sp}\mathbf{(\hat{x},d)}, \quad \hat{\mathbf{x}},\mathbf{d}\in\mathbb{S},\\
\label{uinf=uinfss}
u^\infty_{\Om_{\mathbf{h}},ss}\mathbf{(\hat{x},d)} &=& e^{ik_s\mathbf{h\cdot (d-\hat{x})}} u^\infty_{\Om,ss}\mathbf{(\hat{x},d)}, \quad \hat{\mathbf{x}},\mathbf{d}\in\mathbb{S}.
\en
Finally, \eqref{Trans-Invariance}  follows by taking modulus on both sides of the above four equalities \eqref{uinf=uinfpp}-\eqref{uinf=uinfss}.
\end{proof}

Recently, a simple phase retrieval technique has been proposed in \cite{JiLiuZhang-obstacle} for acoustic obstacle scattering problems. The basic idea is to add three point like scatterers located at $\mathbf{z}\in\R^2\ba\ov{\Om}$ with different scattering strengths $\tau_j\in\C, j=1,2,3$, into the scattering system. The corresponding far field patterns are given by
\ben
u^{\infty}_j=\tau_je^{ik\mathbf{z\cdot(d-\hat{x})}}, j=1,2,3,
\enn
where $k$ is the wave number.
A key step of the proposed phase retrieval technique is to choose three different far field data $u_j^{\infty}$. This is possible by choosing $\tau_j,j=1,2,3$, such that they are not collinear in the complex plane.
Similarly, for the elastic scattering of plane compressional wave $\mathbf{u}^{in}_{p}$ by point like scatterer located at $\mathbf{z}\in\R^2\ba\ov{\Om}$ with strengths $\tau_j\in\C, j=1,2,3$, the corresponding compressional far field pattern is given by
\ben
u^{\infty}_{pp,j}=\tau_je^{ik_p\mathbf{z\cdot(d-\hat{x})}}(\mathbf{d\cdot\hat{x}}), j=1,2,3.
\enn
Clearly, difficulties arise due to the term $(\mathbf{d\cdot\hat{x}})$. In particular, the three elastic compressional far field patterns $u^{\infty}_{pp,j},j=1,2,3$, vanish if $\mathbf{d\cdot\hat{x}}=0$. In practice, $\mathbf{d\cdot\hat{x}}$ is very small for many cases, which makes the phase retrieval quite unstable.

In this work, instead of adding point like scatterers, we introduce point sources into the scattering system. In other words, we also consider the scattering of point sources.
Denote by $\mathbf{z}\in \R^2\ba\ov{\Om},$ $\tau\in\C$ and $\mathbf{q}\in \mathbb{S}$ the location, scattering strength and polarization of the point source, respectively.
The incident point source $\mathbf{v}^{in}$ is given by
\ben
\mathbf{v}^{in}(\mathbf{x,z,q},\tau):= \tau \mathbf{\Phi(x,z)q},
\enn
where $\mathbf{\Phi}$ is the Green's tensor \cite{Arens2001} of the Navier equation in $\R^2$ given by
\ben
\mathbf{\Phi(x,y)}
&:=&\frac{i}{4\mu}H_0^{(1)}(k_s\vert \mathbf{x-y} \vert)\mathbb{I}\cr
&&+\frac{i}{4\om^2}\grad_{\mathbf{x}}\grad_{\mathbf{x}}^{T}(H_0^{(1)}(k_s\vert\mathbf{x-y} \vert)-H_0^{(1)}(k_p\vert\mathbf{x-y}\vert)), \quad \mathbf{x\neq y}
\enn
in terms of the identity matrix $\mathbb{I}$ and the Hankel function $H^{(1)}_0$ of the first kind of order zero.
Note that $\mathbf{\Phi(x,y)q}$ is a radiating elastic scattered field and the corresponding far field patterns are given by \cite{JiLiuXi}
\be
\label{Phip}\mathbf{\Phi}^\infty_{p}(\mathbf{\hx, y, q})&=&  e^{-ik_p\mathbf{\hx\cdot y}}(\mathbf{q}\cdot\mathbf{\hx}),\quad\mathbf{\hx,q}\in \mathbb{S},\, \mathbf{y}\in\R^2,\\
\label{Phis}\mathbf{\Phi}^\infty_{s}(\mathbf{\hx, y, q})&=&  e^{-ik_s\mathbf{\hx\cdot y}}(\mathbf{q}\cdot\mathbf{\hx}^{\perp}),\quad \mathbf{\hx,q}\in \mathbb{S},\,\mathbf{y}\in\R^2.
\en
Denote by $\mathbf{v}^{sc}_{\Om}(\mathbf{x,z,q},\tau)$ and $\mathbf{v}^{\infty}_{\Om}(\mathbf{\hx,z,q},\tau)$ the scattered field and far field pattern, respectively, due to the scattering of point source $\mathbf{v}^{in}(\mathbf{x,z,q},\tau)$ by the rigid body $\Om$.

For all $\mathbf{x}\in\R^2\ba\ov{\Om\cup\{\mathbf{z}\}},\,\mathbf{d,q}\in\mathbb{S},\,\tau\in\C$, define
\ben
\mathbf{w}^{sc}_{\Om\cup\{\mathbf{z}\}}(\mathbf{x,d,q},\tau):=\mathbf{u}^{sc}_{\Om}(\mathbf{x,d})+\mathbf{v}^{sc}_{\Om}(\mathbf{x,z,q},\tau)+\mathbf{v}^{in}(\mathbf{x,z,q},\tau).
\enn
Then $\mathbf{w}^{sc}_{\Om\cup\{\mathbf{z}\}}$ is a radiating solution to
\be\label{usOMz0}
\Delta^{\ast} \mathbf{w}^{sc}_{\Om\cup\{\bf z\}}+\omega^2 \mathbf{w}^{sc}_{\Om\cup\{\mathbf{z}\}}=-\tau \delta_{\mathbf{z}}\mathbf{q}\quad \mbox{in}\,\,\R^2\ba\ov{\Om\cup\{\mathbf{z}\}},
\en
where $\delta_{\mathbf{z}}$ is the Dirac delta function at the point $\mathbf{z}$, and
\be
\label{Dirichlet2}
\mathbf{w}^{sc}_{\Om\cup\{\mathbf{z}\}} = - \mathbf{u}^{in} \quad\mbox{on}\,\,\pa \Om.
\en
Then, by linearity, for all $\mathbf{\hat{x}, d,q}\in\mathbb{S},\tau\in\C$, denote respectively by
\be\label{wmpdef}
w^{\infty}_{\Om\cup\{\mathbf{z}\},mp}(\mathbf{\hat{x},d,q},\tau)=u^{\infty}_{\Om,mp}(\mathbf{\hat{x},d})+v^{\infty}_{\Om,p}(\mathbf{\hx,z,q},\tau)+\tau e^{-ik_p\mathbf{\hx\cdot z}}(\mathbf{q}\cdot\mathbf{\hx})
\en
the compressional far field pattern corresponding to the incident field $\mathbf{u}^{in}_{m}, m=p,s$,
and
\be\label{wspdef}
w^{\infty}_{\Om\cup\{\mathbf{z}\},ms}(\mathbf{\hat{x},d,q},\tau)=u^{\infty}_{\Om,ms}(\mathbf{\hat{x},d})+v^{\infty}_{\Om,s}(\mathbf{\hx,z,q},\tau)+\tau e^{-ik_s\mathbf{\hx\cdot z}}(\mathbf{q}\cdot\mathbf{\hx}^{\perp})
\en
the shear far field pattern corresponding to $\mathbf{u}^{in}_{m}, m=p,s$.

The following Theorem \ref{weakinteraction} implies that the far field pattern $\mathbf{v}^{\infty}_{\Om}$ corresponding to the point source $\mathbf{v}^{in}$ is quite small
if the distance $\rho:=dist (z, \Om)$ is large enough. To prove this, we recall the elastic single-layer boundary operator $\mathbf{S}: [H^{-1/2}(\pa\Om)]^2 \rightarrow [H^{1/2}(\pa\Om)]^2$, given by
\be\label{singlelayerpotential}
(\mathbf{S}\mathbf{\bbphi})(\mathbf{x}):=\int_{\pa\Om}\mathbf{\Phi(x,y)\bbphi(y)}ds(\mathbf{y}),\quad \mathbf{x}\in\pa\Om
\en
and the elastic double-layer boundary operator $\mathbf{K}: [H^{1/2}(\pa\Om)]^2 \rightarrow [H^{1/2}(\pa\Om)]^2 $ by
\ben
\mathbf{(K\bbphi)(x)}:=\int_{\pa\Om}(\mathbb{T}_{\mathbf{\nu(y)}}\mathbf{\Phi(x,y)})^{T}\bbphi(\mathbf{y})ds(\mathbf{y}),\quad \mathbf{x}\in \pa\Om.
\enn
The mapping properties of $\mathbf{S}$ and $\mathbf{K}$ are intensively studied in \cite{HahnerHsiao,Mclean}.\\

\begin{theorem}\label{weakinteraction}
Let ${\bf z}$ be a point outside $\Om$ such that the distance $\rho:=dist (\mathbf{z}, \Om)$ is large enough. Then we have
\ben
v^{\infty}_{\Om,m}(\mathbf{\hx,z,q},\tau) = O\left(\frac{1}{\sqrt{\rho}}\right),\quad \rho\rightarrow\infty,\,\mathbf{\hat{x}, q}\in\mathbb{S}, \tau\in\C, \,m=p,s.
\enn
\end{theorem}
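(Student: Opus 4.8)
The plan is to reduce the claim to a single decay estimate for the incident point source restricted to $\pa\Om$, and then to transport that smallness first to the layer density and then to the far field. By the rigid boundary condition, the scattered field $\mathbf{v}^{sc}_{\Om}(\cdot,\mathbf{z},\mathbf{q},\tau)$ is the unique radiating solution of the exterior Dirichlet problem for the Navier equation \eqref{elastic-O} with boundary data $-\mathbf{v}^{in}(\cdot,\mathbf{z},\mathbf{q},\tau)|_{\pa\Om}=-\tau\,\mathbf{\Phi}(\cdot,\mathbf{z})\mathbf{q}|_{\pa\Om}$. First I would represent this solution by a single-layer potential with density $\bbphi\in[H^{-1/2}(\pa\Om)]^2$, or, to cover the irregular frequencies, by a combined-field potential; in either case the boundary condition becomes a boundary integral equation $\mathbf{A}\bbphi=-\tau\,\mathbf{\Phi}(\cdot,\mathbf{z})\mathbf{q}|_{\pa\Om}$, where $\mathbf{A}$ is assembled from $\mathbf{S}$ (and $\mathbf{K}$) and is boundedly invertible on $[H^{-1/2}(\pa\Om)]^2$ with an inverse that is \emph{independent of} $\mathbf{z}$ and $\mathbf{q}$. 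This yields
\ben
\|\bbphi\|_{[H^{-1/2}(\pa\Om)]^2}\le C\,|\tau|\,\big\|\mathbf{\Phi}(\cdot,\mathbf{z})\mathbf{q}\big\|_{[H^{1/2}(\pa\Om)]^2},
\enn
with $C$ depending only on $\Om$ and $\om$.

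Next I would read off the far field. Passing $\mathbf{v}^{sc}_{\Om}$ to its asymptotics and using the far field kernels of the fundamental solution in \eqref{Phip}--\eqref{Phis}, the pattern $v^{\infty}_{\Om,m}(\mathbf{\hx},\mathbf{z},\mathbf{q},\tau)$ is obtained by integrating $\bbphi$ against $\mathbf{\Phi}^{\infty}_m(\mathbf{\hx},\cdot,\cdot)$ over $\pa\Om$. Since these kernels are smooth and uniformly bounded for $\mathbf{\hx}\in\mathbb{S}$ and $\mathbf{y}\in\pa\Om$, the far-field-extraction map is bounded into $C(\mathbb{S})$ uniformly in $\mathbf{\hx}$, giving
\ben
\sup_{\mathbf{\hx}\in\mathbb{S}}\big|v^{\infty}_{\Om,m}(\mathbf{\hx},\mathbf{z},\mathbf{q},\tau)\big|\le C'\,\|\bbphi\|_{[H^{-1/2}(\pa\Om)]^2}\le C''\,|\tau|\,\big\|\mathbf{\Phi}(\cdot,\mathbf{z})\mathbf{q}\big\|_{[H^{1/2}(\pa\Om)]^2}.
\enn
Everything is therefore reduced to one estimate.

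The main obstacle, and the only point where the hypothesis $\rho\to\infty$ is used, is to prove
\ben
\big\|\mathbf{\Phi}(\cdot,\mathbf{z})\mathbf{q}\big\|_{[H^{1/2}(\pa\Om)]^2}=O\!\left(\rho^{-1/2}\right),\qquad \rho=\mathrm{dist}(\mathbf{z},\Om)\to\infty,
\enn
uniformly in $\mathbf{q}\in\mathbb{S}$. For this I would use the large-argument asymptotics $H^{(1)}_0(t)=\sqrt{2/(\pi t)}\,e^{i(t-\pi/4)}(1+O(1/t))$ together with $H^{(1)}_1(t)=O(t^{-1/2})$ and the Hankel recurrence relations. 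Writing $r=|\mathbf{y}-\mathbf{z}|\ge\rho$ for $\mathbf{y}\in\pa\Om$, every Hankel function entering $\mathbf{\Phi}$ is $O(r^{-1/2})$; the delicate part is the $\nabla_{\mathbf{x}}\nabla_{\mathbf{x}}^{T}$ block, together with the tangential derivatives along $\pa\Om$ needed for the $H^{1/2}$-norm. The key observation is that each such derivative falls on the oscillatory factor $e^{ik r}/\sqrt{r}$ and thus reproduces the leading rate $O(r^{-1/2})$, while the differentiated geometric prefactors stay bounded and the genuinely lower-order pieces decay faster, like $r^{-3/2}$. Hence every entry of $\mathbf{\Phi}(\mathbf{y},\mathbf{z})\mathbf{q}$ and its tangential derivatives is $O(\rho^{-1/2})$ uniformly on the fixed compact curve $\pa\Om$, so that the $[H^{1}(\pa\Om)]^2$-norm, and a fortiori the $[H^{1/2}(\pa\Om)]^2$-norm, is $O(\rho^{-1/2})$. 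Combining the three displays gives $v^{\infty}_{\Om,m}(\mathbf{\hx},\mathbf{z},\mathbf{q},\tau)=O(\rho^{-1/2})$ uniformly in $\mathbf{\hx},\mathbf{q}\in\mathbb{S}$; the dependence on $\tau$ is exactly linear by linearity of the scattering map in the source strength, so $|\tau|$ may be absorbed into the implied constant. This proves the claim for both $m=p$ and $m=s$.
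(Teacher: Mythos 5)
Your proposal is correct and follows essentially the same route as the paper's own proof: both represent $\mathbf{v}^{sc}_{\Om}$ by a layer potential (the paper uses the combined potential $(\mathbb{T}_{\nu}\mathbf{\Phi})^{T}+i\mathbf{\Phi}$ with density in $[H^{1/2}(\pa\Om)]^2$ and the boundedly invertible operator $I/2+\mathbf{K}+i\mathbf{S}$), invert the resulting boundary integral equation with a $\mathbf{z}$-independent bounded inverse, and feed in the $O(\rho^{-1/2})$ decay of the point-source trace $\tau\mathbf{\Phi}(\cdot,\mathbf{z})\mathbf{q}$ on $\pa\Om$ to conclude smallness of the density and hence of the far field. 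If anything, your write-up is the more careful one: the paper passes from the pointwise bound $\mathbf{\Phi(x,y)q}=O(|\mathbf{x}-\mathbf{y}|^{-1/2})$ directly to smallness of the density, whereas the bounded-inverse argument really requires the $[H^{1/2}(\pa\Om)]^2$-norm of the boundary data to be $O(\rho^{-1/2})$, which is exactly the tangential-derivative estimate you supply.
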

\begin{proof}
We seek a solution in the form
\be\label{vs}
\mathbf{v}^{sc}_{\Om}(\mathbf{x,z,q},\tau)
=\int_{\pa\Om}[(\mathbb{T}_{\mathbf{\nu(y)}}\mathbf{\Phi(x,y)})^{T}+i\mathbf{\Phi(x,y)}]\bbphi_{\Om}(\mathbf{y,z,q},\tau)ds(\mathbf{y})\qquad
\en
for all $\mathbf{x}\in\R^2\ba\ov{\Om\cup\{\mathbf{z}\}}, \mathbf{q}\in\mathbb{S}, \tau\in\C$
with a density $\bbphi_{\Om}\in [H^{1/2}(\pa\Om)]^2$.
From the jump relation of the double layer potential \cite{HahnerHsiao,Mclean}, we see that the representation $\mathbf{v}^{sc}_{\Om}$
given in \eqref{vs} solves the exterior Dirichlet boundary problem provided the density is a solution of
the integral equation
\ben
\Big(I/2+\mathbf{K}+i \mathbf{S}\Big)\bbphi_{\Om} = -\mathbf{v}^{in}=-\tau\mathbf{\Phi(\cdot,z)q}\quad \mbox{on}\,\,\pa\Om.
\enn
Note that $I/2+\mathbf{K}+i \mathbf{S}$ is bijective and the inverse $(I/2+\mathbf{K}+i \mathbf{S})^{-1}: [H^{1/2}(\pa\Om)]^2\rightarrow [H^{1/2}(\pa\Om)]^2$ is bounded \cite{HahnerHsiao}. Therefore
\be\label{ppphi}
\bbphi_{\Om} = -(I/2+\mathbf{K}+i \mathbf{S})^{-1}(\tau\mathbf{\Phi(\cdot,z)q}).
\en
A straightforward calculation shows that the Green tensor $\mathbf{\Phi}$ satisfies Sommerfeld's finiteness condition
\ben
\mathbf{\Phi(x,y)q}=O\left(|x-y|^{-\frac{1}{2}}\right), \quad |x-y|\rightarrow\infty.
\enn
This implies that
\ben
\bbphi_{\Om}(\mathbf{y,z,q},\tau) =  O\left(\frac{1}{\sqrt{\rho}}\right), \quad \rho\rightarrow\infty
\enn
for all $\mathbf{y}\in\pa\Om, \mathbf{q}\in\mathbb{S}$. Inserting this into \eqref{vs}, we find that
\ben
\mathbf{v}^{sc}_{\Om}(\mathbf{x,z,q},\tau) =  O\left(\frac{1}{\sqrt{\rho}}\right), \quad \rho\rightarrow\infty
\enn
for all $\mathbf{x}\in\R^2\ba\ov{\Om\cup\{\mathbf{z}\}}, \mathbf{q}\in\mathbb{S}, \tau\in\C$.
The proof is completed by letting $|\mathbf{x}|\rightarrow \infty$ in \eqref{vs} and using \eqref{Phip}-\eqref{Phis}.
\end{proof}

With the previous analysis, we are now well prepared for studying the following elastic inverse obstacle scattering problem.

{\bf Phaseless-IP1}: {\em Determine the location and shape of the scatterer $\Om$ from the following phaseless far field data
\ben
\Big| w^{\infty}_{\Om\cup\{\mathbf{z}\},mn}(\mathbf{\hat{x},d,q},\tau) \Big|, \quad \mathbf{\hat{x}, d,q}\in\mathbb{S},\, \tau\in\C,\, m,n=p, s.
\enn
}

\subsection{Scattering of sources} \label{ESP-F}
In this subsection, we study the scattering of sources with compact support. Denote by $\mathbf{F}=\mathbf{F(x)}\in [L^2(\R^2)]^2$ the external force with compact support $\Om$.
For the source scattering problem, we assume that
\ben
\om\in \mathbb{W}:=(\om_{min}, \om_{max}),
\enn
where $\om_{min}<\om_{max}$ are two fixed circular frequencies.
For the scattering of a source $\mathbf{F}$ we denote the scattered field by $\mathbf{u}^{sc}_{\mathbf{F}}(\mathbf{x},\om)$, and the corresponding far field pattern by
${\bf u}^{\infty}_{\mathbf{F}}(\mathbf{\hx},\om)=[u^{\infty}_{\mathbf{F},p}(\mathbf{\hx},\om),\,u^{\infty}_{\mathbf{F},s}(\mathbf{\hx},\om)]$.

The propagation of time-harmonic elastic wave in an isotropic homogeneous media is governed by the reduced Navier equation
\be\label{elastic-F}
\Delta^{\ast} \mathbf{u}^{sc}_{\mathbf{F}}+\omega^2 \mathbf{u}^{sc}_{\mathbf{F}}=\mathbf{F}\quad \mbox{in}\,\,\R^2.
\en
Again, the scattered field $\mathbf{u}^{sc}_{\mathbf{F}}$ admits the decomposition
\ben
\mathbf{u}^{sc}_{\mathbf{F}}=\mathbf{u}^{sc}_{\mathbf{F},p}+\mathbf{u}^{sc}_{\mathbf{F},s}:=-\frac{1}{k^2_p}\ggrad\ddiv\mathbf{u}^{sc}_{\mathbf{F}}-\frac{1}{k^2_s}\ggrad^{\perp}\ddiv^{\perp}\mathbf{u}^{sc}_{\mathbf{F}}\quad
\mbox{in}\, \R^2\ba\ov{D}
\enn
and satisfies the Kupradze's radiation conditions
\be\label{KupradzeRC-F}
\frac{\partial \mathbf{u}_{\mathbf{F},p}^{sc}}{\partial r}-ik_p\mathbf{u}_{\mathbf{F},p}^{sc}=o(r^{-\frac{1}{2}}),\qquad
\frac{\partial \mathbf{u}_{\mathbf{F},s}^{sc}}{\partial r}-ik_s\mathbf{u}_{\mathbf{F},s}^{sc}=o(r^{-\frac{1}{2}}),
\en
uniformly in all directions $\hat{\mathbf{x}}\in \mathbb{S}$ as $r:=|\mathbf{x}|\rightarrow\infty$.

The radiating solution $\mathbf{u}^{sc}_{\mathbf{F}}$ to the scattering problem \eqref{elastic-F}-\eqref{KupradzeRC-F} takes the form
\be\label{us}
\mathbf{u}^{sc}_{\mathbf{F}}(\mathbf{x},\om)=\int_{\R^2}\mathbf{\Phi(x,y)F(y)}d\mathbf{y},\quad \mathbf{x}\in\R^2,\,\om\in \mathbb{W}.
\en
By \eqref{Phip}-\eqref{Phis}, the corresponding compressional and shear far field patterns of $\mathbf{u}^{sc}_{\mathbf{F}}$ are given by
\be\label{uinfFp}
u^{\infty}_{\mathbf{F},p}(\mathbf{\hx},\om)=\int_{\R^2}e^{-ik_p\mathbf{\hx\cdot y}}\mathbf{\hx\cdot F(y)}d\mathbf{y},\quad \mathbf{\hx}\in\mathbb{S},\,\om\in \mathbb{W}
\en
and
\be\label{uinfFs}
u^{\infty}_{\mathbf{F},s}(\mathbf{\hx},\om)=\int_{\R^2}e^{-ik_s\mathbf{\hx\cdot y}}\mathbf{\hx^{\perp}\cdot F(y)}d\mathbf{y},\quad \mathbf{\hx}\in\mathbb{S},\,\om\in \mathbb{W},
\en
respectively.

The analogous result of Theorem \ref{TI-O} is formulated in the following theorem.
\begin{theorem}\label{TI-F}
Let $\mathbf{F_{h}(y)}:=\mathbf{F(y+h)}, \mathbf{y}\in \R^2$ be the shifted source with a fixed vector $\mathbf{h}\in \R^2$.
For the scattering of the source $\mathbf{F_{h}}$, we denote the scattered field by $\mathbf{u}^{sc}_{\mathbf{F_h}}$ and its far field pattern by $(u^{\infty}_{\mathbf{F_h},p}, u^{\infty}_{\mathbf{F_h},s})$. Then we have the following translation invariance property
\be\label{Trans-Invariance-F}
|u^{\infty}_{\mathbf{F_{h}},m}(\hat{\mathbf{x}},\om)| = |u^{\infty}_{\mathbf{F},m}(\hat{\mathbf{x}},\om)|, \quad \hat{\mathbf{x}}\in\mathbb{S},\,\om\in \mathbb{W},\,m=p,s.
\en
\end{theorem}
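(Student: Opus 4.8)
The plan is to avoid any PDE argument and to work directly from the explicit integral representations \eqref{uinfFp} and \eqref{uinfFs} of the compressional and shear far field patterns. Unlike the obstacle case of Theorem \ref{TI-O}, where one must first transfer a boundary relation to the exterior domain via the uniqueness of radiating solutions, here each far field pattern is already presented as an explicit integral of the source against an exponential kernel, so the entire claim reduces to a single change of variables.

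First I would insert the shifted source $\mathbf{F_h(y)}=\mathbf{F(y+h)}$ into \eqref{uinfFp}, obtaining
\[
u^{\infty}_{\mathbf{F_h},p}(\hat{\mathbf{x}},\om)=\int_{\R^2}e^{-ik_p\hat{\mathbf{x}}\cdot\mathbf{y}}\,\hat{\mathbf{x}}\cdot\mathbf{F(y+h)}\,d\mathbf{y}.
\]
Substituting $\mathbf{z}=\mathbf{y+h}$ (so that $d\mathbf{z}=d\mathbf{y}$ and $\hat{\mathbf{x}}\cdot\mathbf{y}=\hat{\mathbf{x}}\cdot\mathbf{z}-\hat{\mathbf{x}}\cdot\mathbf{h}$) factors the kernel as $e^{-ik_p\hat{\mathbf{x}}\cdot\mathbf{y}}=e^{ik_p\hat{\mathbf{x}}\cdot\mathbf{h}}\,e^{-ik_p\hat{\mathbf{x}}\cdot\mathbf{z}}$. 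Since the phase $e^{ik_p\hat{\mathbf{x}}\cdot\mathbf{h}}$ is independent of the integration variable, it may be pulled outside the integral, yielding $u^{\infty}_{\mathbf{F_h},p}(\hat{\mathbf{x}},\om)=e^{ik_p\hat{\mathbf{x}}\cdot\mathbf{h}}\,u^{\infty}_{\mathbf{F},p}(\hat{\mathbf{x}},\om)$. The identical computation applied to \eqref{uinfFs}, with $k_p$ replaced by $k_s$ and the polarization $\hat{\mathbf{x}}$ replaced by $\hat{\mathbf{x}}^{\perp}$, gives $u^{\infty}_{\mathbf{F_h},s}(\hat{\mathbf{x}},\om)=e^{ik_s\hat{\mathbf{x}}\cdot\mathbf{h}}\,u^{\infty}_{\mathbf{F},s}(\hat{\mathbf{x}},\om)$.

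Finally, taking moduli of both identities and using that $k_p$, $k_s$ and $\hat{\mathbf{x}}\cdot\mathbf{h}$ are real, so that $|e^{ik_m\hat{\mathbf{x}}\cdot\mathbf{h}}|=1$ for $m=p,s$, produces \eqref{Trans-Invariance-F}. I do not expect any genuine obstacle here; the argument is a routine translation computation whose only delicate point is the sign convention, namely that the definition $\mathbf{F_h(y)}=\mathbf{F(y+h)}$ shifts the support by $-\mathbf{h}$ and hence produces the phase $e^{+ik_m\hat{\mathbf{x}}\cdot\mathbf{h}}$ rather than $e^{-ik_m\hat{\mathbf{x}}\cdot\mathbf{h}}$. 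Since only the modulus enters the conclusion, this sign is immaterial and the translation invariance follows.
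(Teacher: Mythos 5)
Your proposal is correct and coincides with the paper's own proof: both insert the shifted source into the representations \eqref{uinfFp}--\eqref{uinfFs}, change variables to pull out the unimodular phase factor $e^{ik_m\hat{\mathbf{x}}\cdot\mathbf{h}}$ (same sign as in the paper's relations \eqref{uFp}--\eqref{uFs}), and conclude by taking moduli. The only difference is that you spell out the change of variables the paper leaves implicit.
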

\begin{proof}
By the far field representations \eqref{uinfFp}-\eqref{uinfFs}, we find
\be\label{uFp}
u^{\infty}_{\mathbf{F_{h}},p}(\hat{\mathbf{x}},\om)=e^{ik_p\mathbf{h\cdot\hx}}u^{\infty}_{\mathbf{F},p}(\hat{\mathbf{x}},\om), \quad \hat{\mathbf{x}}\in\mathbb{S},\,\om\in \mathbb{W}
\en
and
\be\label{uFs}
u^{\infty}_{\mathbf{F_{h}},s}(\hat{\mathbf{x}},\om)=e^{ik_s\mathbf{h\cdot\hx}}u^{\infty}_{\mathbf{F},s}(\hat{\mathbf{x}},\om), \quad \hat{\mathbf{x}}\in\mathbb{S},\,\om\in \mathbb{W}.
\en
Then the statement follows by taking the modulus on both sides of \eqref{uFp}-\eqref{uFs}.
\end{proof}

Theorem \ref{TI-F} implies that the phaseless far field data are invariant under the translation of the source. Thus the location of the source can be not uniquely determined.
Clearly, from \eqref{uinfFp}-\eqref{uinfFs}, we have
\ben
|u^{\infty}_{c\mathbf{F},m}(\hat{\mathbf{x}},\om)|=|u^{\infty}_{\mathbf{F},m}(\hat{\mathbf{x}},\om)|, \quad \hat{\mathbf{x}}\in\mathbb{S},\,\om\in \mathbb{W},\,m=p,s
\enn
for any constant $c\in\C$ with $|c|=1$. Therefore, the source can not be uniquely determined from the phaseless far field data $|u^{\infty}_{\mathbf{F},p}(\hat{\mathbf{x}},\om)|$
and $|u^{\infty}_{\mathbf{F},s}(\hat{\mathbf{x}},\om)|, \quad \hat{\mathbf{x}}\in\mathbb{S},\,\om\in \mathbb{W}$, even the location $\Om$ of the source is known in advance.

To solve the above mentioned problems, we introduce again point sources into the scattering system. Let $\mathbf{z}\in \R^2\ba\ov{\Om},$ $\tau\in\C$ and $\mathbf{q}\in \mathbb{S}$ be the location, scattering strength and polarization of the point source, respectively.
For the scattering of combined sources, the scattered field $\mathbf{u}^{sc}_{\mathbf{F\cup\{z}\}}:=\mathbf{u}^{sc}_{\mathbf{F\cup\{z}\}}(\mathbf{x,q},\om,\tau)$ is given by
\be\label{usFz0}
\mathbf{u}^{sc}_{\mathbf{F}\cup\{\mathbf{z}\}}(\mathbf{x,q},\om,\tau)=
\mathbf{u}^{sc}_{\mathbf{F}}(\mathbf{x},\om)+\tau \mathbf{\Phi(x,z)}\mathbf{q},\quad \mathbf{x}\in\R^2, \mathbf{q}\in\mathbb{S}, \,\om\in \mathbb{W},\,\tau\in\C\qquad
\en
and the corresponding compressional and shear far field patterns are given by
\be\label{uinfFpz0}
u^{\infty}_{\mathbf{F}\cup\{\mathbf{z}\},p}(\mathbf{\hx,q},\om,\tau)=
u^{\infty}_{\mathbf{F},p}(\mathbf{\hx},\om)+\tau e^{-ik_p\mathbf{\hx\cdot z}}\mathbf{q\cdot\hx},\quad \mathbf{\hx, q}\in\mathbb{S}, \,\om\in \mathbb{W},\,\tau\in\C \qquad
\en
and
\be\label{uinfFsz0}
u^{\infty}_{\mathbf{F}\cup\{\mathbf{z}\},s}(\mathbf{\hx,q},\om,\tau)=
u^{\infty}_{\mathbf{F},s}(\mathbf{\hx},\om)+\tau e^{-ik_s\mathbf{\hx\cdot z}}\mathbf{q\cdot\hx^{\perp}},\quad \mathbf{\hx, q}\in\mathbb{S}, \,\om\in \mathbb{W},\,\tau\in\C, \qquad
\en
respectively.

Finally, the corresponding inverse source problem of our interest is as follows.

{\bf Phaseless-IP2}: {\em Determine the source $\mathbf{F}$ from the following phaseless far field data
\ben
\Big| u^{\infty}_{\mathbf{F}\cup\{\mathbf{z}\},m}(\mathbf{\hat{x},q},\om,\tau) \Big|, \quad \mathbf{\hat{x}, q}\in\mathbb{S},\,\om\in\mathbb{W},\,\tau\in\C,\, m=p, s.
\enn
}

\section{Uniqueness}\label{sec:uniqueness}
Using the results of the previous section, this section is devoted to study uniqueness for the inverse elastic scattering problems with phaseless far field data.

\subsection{Uniqueness for sources}
Firstly, we establish the uniqueness results for {\bf Phaseless-IP2}, i.e., the inverse source scattering problems.
Let $B_R$ be a disk with radius $R$ large enough such that $\Om$ is contained inside. In a recent paper \cite{BaoLiZhao}, the authors show that the source $\mathbf{F}$ can be uniquely determined by the scattered field $\mathbf{u}^{sc}_{\bf F}(\mathbf{x},\om), \mathbf{x}\in\pa B_R, \om\in \mathbb{W}$. By Rellich's lemma \cite{CK}, the compressional scattered field $\mathbf{u}^{sc}_{\mathbf{F},p}$ and the shear scattered field $\mathbf{u}^{sc}_{\mathbf{F},s}$ are uniquely determined by the corresponding compressional far field $u^{\infty}_{\mathbf{F},p}$ and shear far field $u^{\infty}_{\mathbf{F},s}$, respectively. Therefore, we immediately deduce the following uniqueness result with phased far field data.\\

\begin{theorem}\label{uni-full}
The source $\mathbf{F}$ is uniquely determined by the multi-frequency far field data sets
\be\label{Ap}
\mathcal {A}_{p}:=\big\{u^{\infty}_{\mathbf{F},p}(\mathbf{\hx}, \om):\,\mathbf{\hx}\in \mathbb{S},\,\om\in \mathbb{W}\big\}
\en
and
\be\label{As}
\mathcal {A}_{s}:=\big\{u^{\infty}_{\mathbf{F},s}(\mathbf{\hx}, \om):\,\mathbf{\hx}\in \mathbb{S},\,\om\in \mathbb{W}\big\}.
\en
\end{theorem}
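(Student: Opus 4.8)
The plan is to chain together the two ingredients recalled immediately before the theorem: the multi-frequency interior-data uniqueness of \cite{BaoLiZhao} for the full scattered field, and Rellich's lemma applied separately to the compressional and shear parts. Since the assertion is a uniqueness statement, I would argue contrapositively: suppose two compactly supported sources $\mathbf{F}$ and $\mathbf{G}$, both supported inside $B_R$, generate the same data, i.e. $u^{\infty}_{\mathbf{F},p}(\hat{\mathbf{x}},\om)=u^{\infty}_{\mathbf{G},p}(\hat{\mathbf{x}},\om)$ and $u^{\infty}_{\mathbf{F},s}(\hat{\mathbf{x}},\om)=u^{\infty}_{\mathbf{G},s}(\hat{\mathbf{x}},\om)$ for all $\hat{\mathbf{x}}\in\mathbb{S}$ and all $\om\in\mathbb{W}$, and I would aim to conclude $\mathbf{F}=\mathbf{G}$.

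First I would fix an arbitrary frequency $\om\in\mathbb{W}$ and pass from far fields back to near fields. The difference $\mathbf{u}^{sc}_{\mathbf{F},p}-\mathbf{u}^{sc}_{\mathbf{G},p}$ is a radiating solution of the Helmholtz equation with wave number $k_p$ in $\R^2\ba\ov{B_R}$ whose far field pattern vanishes by hypothesis, so Rellich's lemma (exactly as recalled just before the theorem) yields $\mathbf{u}^{sc}_{\mathbf{F},p}=\mathbf{u}^{sc}_{\mathbf{G},p}$ in $\R^2\ba\ov{B_R}$; the identical argument with wave number $k_s$ gives $\mathbf{u}^{sc}_{\mathbf{F},s}=\mathbf{u}^{sc}_{\mathbf{G},s}$ there. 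Summing the Helmholtz decomposition $\mathbf{u}^{sc}_{\mathbf{F}}=\mathbf{u}^{sc}_{\mathbf{F},p}+\mathbf{u}^{sc}_{\mathbf{F},s}$ then shows $\mathbf{u}^{sc}_{\mathbf{F}}=\mathbf{u}^{sc}_{\mathbf{G}}$ in $\R^2\ba\ov{B_R}$, and in particular on the measurement circle $\pa B_R$. As $\om\in\mathbb{W}$ was arbitrary, this equality of full scattered fields holds across the whole frequency band.

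With the full scattered field now identified on $\pa B_R$ for all $\om\in\mathbb{W}$, I would invoke the source uniqueness theorem of \cite{BaoLiZhao}: a compactly supported source is uniquely determined by $\mathbf{u}^{sc}(\cdot,\om)|_{\pa B_R}$ with $\om\in\mathbb{W}$. This forces $\mathbf{F}=\mathbf{G}$ and proves the claim, the statement being obtained by running the same reasoning once through the data set $\mathcal{A}_p$ and once through $\mathcal{A}_s$ (both are needed, since each only recovers one of the two radiating parts).

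I expect the only genuinely delicate point to be the role of the frequency band. At a single fixed $\om$ the source-to-far-field map has a nontrivial kernel of non-radiating sources, so uniqueness must fail with one frequency; the interval $\mathbb{W}$ is essential and is precisely what \cite{BaoLiZhao} exploits. The remaining steps are routine once one observes that $\pa B_R$ lies outside the common support, so that both the compressional and shear parts are bona fide radiating Helmholtz solutions there and their sum reproduces the measured field without loss.
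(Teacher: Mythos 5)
Your proposal is correct and takes essentially the same route as the paper: Rellich's lemma recovers the compressional and shear scattered fields (and hence, by summing the decomposition, the full scattered field on $\pa B_R$) from the data sets $\mathcal{A}_p$ and $\mathcal{A}_s$, and the multi-frequency uniqueness result of \cite{BaoLiZhao} for the scattered field on $\pa B_R$, $\om\in\mathbb{W}$, then forces the two sources to coincide. The paper's own proof consists of exactly these two ingredients stated in the paragraph preceding the theorem; your version merely spells out the routine two-source bookkeeping.
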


We want to remark that by analyticity the far field pattern can be determined on the whole unit circle $\mathbb{S}$ using partial value on any circular arc of $\mathbb{S}$. Thus the previous theorem also holds if the far field data are given on any arc. In the sequel we set
\be\label{Sq}
\mathbb{S}_{\mathbf{q}}:=\{\mathbf{\hx}\in\mathbb{S}\,|\,\mathbf{q\cdot\hx}\geq 1/2\}\quad\mbox{and}\quad \mathbb{S}^{\perp}_{\mathbf{q}}:=\{\mathbf{\hx}\in\mathbb{S}\,|\,\mathbf{q\cdot\hx^{\perp}}\geq 1/2\}
\en
for some fixed $\mathbf{q}\in \mathbb{S}$.
Define
\ben
\mathcal{Q}:=\{\mathbf{q}_1,\mathbf{q}_2,\mathbf{q}_3\},
\enn
where
\ben
\mathbf{q}_1:=\left(\cos\frac{\pi}{4}, \sin\frac{\pi}{4}\right)^{T},\quad \mathbf{q}_2:=\left(\cos\frac{11\pi}{12}, \sin\frac{11\pi}{12}\right)^{T},\quad \mathbf{q}_3:=\left(\cos\frac{19\pi}{12}, \sin\frac{19\pi}{12}\right)^{T}.
\enn
It follows that
\ben
\mathbb{S}=\bigcup_{\mathbf{q}\in\mathcal{Q}}\mathbb{S}_{\mathbf{q}}=\bigcup_{\mathbf{q}\in\mathcal{Q}}\mathbb{S}^{\perp}_{\mathbf{q}}.
\enn\\

\begin{theorem}\label{uni-phaseless012}
Define $\tau_1:=|\tau_1|e^{i\alpha}\in\C\ba\{0\}$ with the principal argument $\alpha\in [0,2\pi)$, and let $\mathbf{z_0}$ and $\mathbf{z_1}$ be two different points in $\R^2\ba\ov{\Om}$.
For fixed polarization $\mathbf{q}\in \mathbb{S}$ of the point sources, define two circular arcs $\mathbb{S}_{\mathbf{q}}$ and $\mathbb{S}^{\perp}_{\mathbf{q}}$ as in \eqref{Sq}.
Then the source $\mathbf{F}$ is uniquely determined by the two phaseless far field data sets
\be\label{Bp}
\mathcal {B}_{p}:=\big\{|u^{\infty}_{\mathbf{F\cup\{z\}},p}(\mathbf{\hx, q}, \om, \tau)|:\,\mathbf{\hx}\in \mathbb{S}_{\mathbf{q}},\,\om\in \mathbb{W},\, \tau\in \mathcal \{0, \tau_1\}, \mathbf{z}\in \{\mathbf{z_0,z_1}\}\big\}
\en
and
\be\label{Bs}
\mathcal {B}_{s}:=\big\{|u^{\infty}_{\mathbf{F\cup\{z\}},s}(\mathbf{\hx, q}, \om, \tau)|:\,\mathbf{\hx}\in \mathbb{S}^{\perp}_{\mathbf{q}},\,\om\in \mathbb{W},\, \tau\in \mathcal \{0, \tau_1\}, \mathbf{z}\in \{\mathbf{z_0,z_1}\}\big\}.
\en
\end{theorem}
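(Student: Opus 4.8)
The plan is to exploit the additional point-source data to run a phase-retrieval step that recovers the genuine (phased) compressional and shear far field patterns of $\mathbf{F}$ on the two arcs, and then to fall back on the phased-data uniqueness already recorded in Theorem \ref{uni-full}. Concretely, suppose two sources $\mathbf{F}$ and $\tilde{\mathbf{F}}$ generate the same data sets $\mathcal{B}_p$ and $\mathcal{B}_s$. I would show $u^\infty_{\mathbf{F},p}=u^\infty_{\tilde{\mathbf{F}},p}$ and $u^\infty_{\mathbf{F},s}=u^\infty_{\tilde{\mathbf{F}},s}$ on all of $\mathbb{S}\times\mathbb{W}$, so that the phased data sets $\mathcal{A}_p,\mathcal{A}_s$ of \eqref{Ap}--\eqref{As} coincide for the two sources and Theorem \ref{uni-full} forces $\mathbf{F}=\tilde{\mathbf{F}}$.

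For the phase retrieval, fix $\hat{\mathbf{x}}\in\mathbb{S}_{\mathbf{q}}$ and $\om\in\mathbb{W}$, and write $a:=u^\infty_{\mathbf{F},p}(\hat{\mathbf{x}},\om)$ and $b_j:=\tau_1(\mathbf{q}\cdot\hat{\mathbf{x}})e^{-ik_p\hat{\mathbf{x}}\cdot\mathbf{z}_j}$ for $j=0,1$. By \eqref{uinfFpz0} the three available moduli --- from $\tau=0$, from $(\tau,\mathbf{z})=(\tau_1,\mathbf{z}_0)$, and from $(\tau,\mathbf{z})=(\tau_1,\mathbf{z}_1)$ --- are exactly the distances $|a|$, $|a-(-b_0)|$, $|a-(-b_1)|$ from the unknown point $a$ to the three fixed points $0$, $-b_0$, $-b_1$ of the complex plane. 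Because $\hat{\mathbf{x}}\in\mathbb{S}_{\mathbf{q}}$ forces $\mathbf{q}\cdot\hat{\mathbf{x}}\geq 1/2>0$, the shifts $b_0,b_1$ are nonzero (this is precisely why the arc is chosen this way). The key geometric fact is that a point of the plane is uniquely determined by its distances to three non-collinear points; here the three centers are non-collinear precisely when $b_0/b_1=e^{-ik_p\hat{\mathbf{x}}\cdot(\mathbf{z}_0-\mathbf{z}_1)}$ is not real, i.e. when $k_p\,\hat{\mathbf{x}}\cdot(\mathbf{z}_0-\mathbf{z}_1)\notin\pi\Z$ (note that the phase $\alpha$ of $\tau_1$ cancels here). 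At every such $(\hat{\mathbf{x}},\om)$ the three circles meet in a single point, so $a=\tilde a$ and the phased value is recovered.

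It remains to remove the exceptional collinear set and to pass to the full circle. For a fixed direction $\hat{\mathbf{x}}$ with $\hat{\mathbf{x}}\cdot(\mathbf{z}_0-\mathbf{z}_1)\neq 0$, the collinearity condition $k_p\,\hat{\mathbf{x}}\cdot(\mathbf{z}_0-\mathbf{z}_1)\in\pi\Z$ becomes, since $k_p=\om/\sqrt{\la+2\mu}$, a condition of the form $\om\in c\,\pi\Z$ with a nonzero constant $c=c(\hat{\mathbf{x}})$, hence holds only on a discrete subset of $\mathbb{W}$; as $u^\infty_{\mathbf{F},p}(\hat{\mathbf{x}},\cdot)$ depends analytically on $\om$, the agreement $a=\tilde a$ off this discrete set propagates to all of $\mathbb{W}$. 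The finitely many directions on the arc perpendicular to $\mathbf{z}_0-\mathbf{z}_1$ are then filled in using continuity of $\hat{\mathbf{x}}\mapsto u^\infty_{\mathbf{F},p}$. Thus $u^\infty_{\mathbf{F},p}=u^\infty_{\tilde{\mathbf{F}},p}$ on $\mathbb{S}_{\mathbf{q}}\times\mathbb{W}$, and by the analyticity extension noted just after Theorem \ref{uni-full} on all of $\mathbb{S}\times\mathbb{W}$. Repeating the argument verbatim with $k_s$ in place of $k_p$, the factor $\mathbf{q}\cdot\hat{\mathbf{x}}^{\perp}\geq 1/2$ on $\mathbb{S}^{\perp}_{\mathbf{q}}$, and \eqref{uinfFsz0} recovers $u^\infty_{\mathbf{F},s}$ on $\mathbb{S}\times\mathbb{W}$, and Theorem \ref{uni-full} completes the proof.

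The main obstacle is the phase-retrieval core: establishing that three distances pin down the point uniquely and, above all, certifying the non-collinearity of the three centers on a large enough set of $(\hat{\mathbf{x}},\om)$. The restriction to the arcs $\mathbb{S}_{\mathbf{q}},\mathbb{S}^{\perp}_{\mathbf{q}}$ guarantees the shift terms never vanish, while the use of two distinct source points together with the frequency band $\mathbb{W}$ is what keeps the collinear set negligible; handling that exceptional set by analytic continuation in $\om$ and in $\hat{\mathbf{x}}$ is the delicate bookkeeping to which the rest of the argument reduces.
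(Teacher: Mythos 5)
Your proposal is correct, and its core runs along a genuinely different track from the paper's own proof of this theorem. The shared skeleton is identical: recover the phased compressional and shear far field patterns on the arcs, extend by analyticity, and conclude with Theorem \ref{uni-full}. But the paper's phase-retrieval step is trigonometric: it writes $u^{\infty}_{\mathbf{F},p}=|u^{\infty}_{\mathbf{F},p}|e^{i\phi}$, extracts $\cos[\phi-(\alpha-k_p\hat{\mathbf{x}}\cdot\mathbf{z})]$ from the expanded squared modulus, and kills the reflected branch by observing that the identity $\phi_1+\phi_2-2l'\pi=2[\alpha-k_p\hat{\mathbf{x}}\cdot\mathbf{z}]$ cannot hold for both $\mathbf{z}_0$ and $\mathbf{z}_1$. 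You instead read the three moduli (from $\tau=0$, $(\tau_1,\mathbf{z}_0)$, $(\tau_1,\mathbf{z}_1)$) as distances from the unknown value to the three centers $0$, $-b_0$, $-b_1$ and invoke the three-distance rigidity of Lemma \ref{phaseretrieval} --- the tool the paper reserves for Theorem \ref{uni-phaseless123} --- with the two source points certifying non-collinearity. Your route buys three things: no case analysis; no need to assume $u^{\infty}_{\mathbf{F},p}(\hat{\mathbf{x}},\om)\neq 0$ (you never have to define a phase); and the exceptional set is identified precisely as $k_p\hat{\mathbf{x}}\cdot(\mathbf{z}_0-\mathbf{z}_1)\in\pi\Z$. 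This last point is in fact slightly sharper than the paper: the contradiction used there to rule out \eqref{case2} tacitly treats the integer $l'$ in \eqref{phiplus} as independent of $\mathbf{z}$, whereas if $l'$ is allowed to depend on $\mathbf{z}$ the contradiction genuinely requires $k_p\hat{\mathbf{x}}\cdot(\mathbf{z}_0-\mathbf{z}_1)\notin\pi\Z$, which is exactly the condition you isolate (the paper's set $\mathbb{S}_{\mathbf{q},0}$ only excludes $\hat{\mathbf{x}}\cdot(\mathbf{z}_0-\mathbf{z}_1)=0$). Your bookkeeping for that set --- discreteness in $\om$ for fixed $\hat{\mathbf{x}}$, analytic continuation in $\om$, then continuity and analyticity in $\hat{\mathbf{x}}$ --- is a valid alternative to the paper's (a measure-zero set of directions for each fixed $\om$, then analytic continuation in $\hat{\mathbf{x}}$), and both land on the same application of Theorem \ref{uni-full}.
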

\begin{proof}
We first show that the compressional far field data set $\mathcal{A}_p$ given in \eqref{Ap} is uniquely determined by the phaseless far field data set $\mathcal{B}_p$.
To accomplish this, we observe that from the representation \eqref{uinfFpz0} it follows that
\ben
&&|u^{\infty}_{\mathbf{F\cup\{z\}}, p}(\mathbf{\hx, q}, \om, \tau_1)|^2\cr
&=&|u^{\infty}_{\mathbf{F}, p}(\mathbf{\hx}, \om)+\tau_1 e^{-ik_p\mathbf{\hx\cdot z}}\mathbf{q\cdot\hx}|^2\cr
&=&|u^{\infty}_{\mathbf{F}, p}(\mathbf{\hx}, \om)|^2+2\mathbf{q\cdot\hx}\Re\big(u^{\infty}_{\mathbf{F}, p}(\mathbf{\hx}, \om)\ov{\tau_1 e^{-ik_p\mathbf{\hx\cdot z}}}\big)+|\tau_1|^2,\quad \mathbf{\hx}\in\mathbb{S}_{\mathbf{q}},\,\om\in \mathbb{W},\,\mathbf{z\in \{z_0,z_1\}}.
\enn
From this, since $\mathbf{q\cdot\hx}\geq1/2$ for $\mathbf{\hx}\in\mathbb{S}_{\mathbf{q}}$, we deduce that $\Re\big(u^{\infty}_{\mathbf{F}, p}(\mathbf{\hx}, \om)\ov{\tau_1 e^{-ik_p\mathbf{\hx\cdot z}}}\big)$ can be uniquely determined for all $\mathbf{\hx}\in\mathbb{S}_{\mathbf{q}},\,\om\in \mathbb{W},\,\mathbf{z\in \{z_0,z_1\}}$. We rewrite $u^{\infty}_{\mathbf{F}, p}(\mathbf{\hx}, \om)$ and $\tau_1 e^{-ik_p\mathbf{\hx\cdot z}}$ in the form
\ben
u^{\infty}_{\mathbf{F}, p}(\mathbf{\hx}, \om)=|u^{\infty}_{\mathbf{F}, p}(\mathbf{\hx}, \om)|e^{i\phi(\mathbf{\hx},\om)} \quad\mbox{and}\quad \tau_1 e^{-ik_p\mathbf{\hx\cdot z}}=|\tau_1|e^{i(\alpha-k_p\mathbf{\hx\cdot z})},
\enn
respectively, for all $\mathbf{\hx}\in\mathbb{S}_{\mathbf{q}},\,\om\in \mathbb{W},\,\mathbf{z\in \{z_0,z_1\}}$.
Here, $\phi(\mathbf{\hx},\om)\in [0,2\pi)$ is the principal argument of $u^{\infty}_{\mathbf{F}, p}(\mathbf{\hx}, \om)$.
We claim that the principal argument $\phi(\mathbf{\hx},\om)$ is uniquely determined.
Indeed, assume that there are two arguments $\phi_1(\mathbf{\hx},\om)$ and $\phi_2(\mathbf{\hx},\om)$.
Define
\ben
\mathbb{S}_{\mathbf{q},0}({\bf z_0,z_1},\om):=\{{\bf\hx}\in \mathbb{S}_{\mathbf{q}}:\, u^{\infty}_{\mathbf{F}}(\mathbf{\hx}, \om)\neq 0\quad\,\mbox{and}\quad\, \mathbf{\hx\cdot(z_0-z_1)}\neq 0\}.
\enn
It is easy to verify that $\mathbb{S_\mathbf{q}}\ba\mathbb{S}_{\mathbf{q},0}({\bf z_0,z_1},\om)$ has Lebegue meassure zero.
Since $|u^{\infty}_{\mathbf{F}, p}(\mathbf{\hx}, \om)|$ is given in $\mathcal {B}_p$ and $\Re\big(u^{\infty}_{\mathbf{F}, p}(\mathbf{\hx}, \om)\ov{\tau_1 e^{-ik_p\mathbf{\hx\cdot z}}}\big)$ can be uniquely determined for all $\mathbf{\hx}\in\mathbb{S}_{\mathbf{q}},\,\om\in \mathbb{W},\,\mathbf{z\in \{z_0,z_1\}}$,
we have that for all $\mathbf{\hx}\in\mathbb{S}_{\mathbf{q},0}({\bf z_0,z_1},\om),\,\om\in \mathbb{W},\,\mathbf{z\in \{z_0,z_1\}}$,
\ben
\cos[\phi_1(\mathbf{\hx},\om)-(\alpha-k_p\mathbf{\hx\cdot z})]=\cos[\phi_2(\mathbf{\hx},\om)-(\alpha-k_p\mathbf{\hx\cdot z})],
\enn
and furthermore, we conclude that
\be\label{case1}
\phi_1(\mathbf{\hx},\om)-(\alpha-k_p\mathbf{\hx\cdot z})=\phi_2(\mathbf{\hx},\om)-(\alpha-k_p\mathbf{\hx\cdot z})+2l\pi,\quad \mbox{for\,\,some}\, l\in\Z,\quad
\en
or
\be\label{case2}
\phi_1(\mathbf{\hx},\om)-(\alpha-k_p\mathbf{\hx\cdot z})=-[\phi_2(\mathbf{\hx},\om)-(\alpha-k_p\mathbf{\hx\cdot z})]+2l'\pi,\ \mbox{for\,\,some}\, l'\in\Z.\quad
\en

We now show that the case \eqref{case2} does not hold. Actually, \eqref{case2} implies that
\be\label{phiplus}
\phi_1(\mathbf{\hx},\om)+\phi_2(\mathbf{\hx},\om)-2l'\pi=2[\alpha-k_p\mathbf{\hx\cdot z}],\quad \mathbf{z\in \{z_0, z_1\}}, \,\mbox{for\,\,some}\,l'\in\Z.
\en
The left hand side of \eqref{phiplus} is independent of ${\bf z}$. However, the right hand side of \eqref{phiplus} changes for different $\mathbf{z\in \{z_0, z_1\}}$. This leads to a contradiction, and thus \eqref{case2} does not hold.

For the case when \eqref{case1} holds, we have
\ben
\phi_1(\mathbf{\hx},\om)-\phi_2(\mathbf{\hx},\om)=2l\pi,\quad \mbox{for\,\,some}\,l\in\Z.
\enn
Noting that $\phi_1(\mathbf{\hx},\om),\phi_2(\mathbf{\hx},\om)\in [0,2\pi)$, we have $\phi_1(\mathbf{\hx},\om)-\phi_2(\mathbf{\hx},\om)\in (-2\pi,2\pi)$, and thus $l=0$, i.e.,
\ben
\phi_1(\mathbf{\hx},\om)=\phi_2(\mathbf{\hx},\om), \quad\forall\,\mathbf{\hx}\in\mathbb{S}_{\mathbf{q},0}({\bf z_0,z_1},\om),\,\om\in \mathbb{W}.
\enn
This further implies that $u^{\infty}_{\mathbf{F}, p}(\mathbf{\hx}, \om)$ is uniquely determined for all $\mathbf{\hx}\in\mathbb{S}_{\mathbf{q},0}({\bf z_0,z_1},\om),\,\om\in \mathbb{W}$ and also for $\mathbf{\hx}\in\mathbb{S},\,\om\in \mathbb{W}$ by analytic continuation.

Similarly, the shear far field data set $\mathcal{A}_s$ given in \eqref{As} is uniquely determined by the set $\mathcal{B}_s$.
The conclusion of the theorem now follows from Theorem \ref{uni-full}.
\end{proof}\\

Theorem \ref{uni-phaseless01} below shows that, under further assumptions on the source, uniqueness can even be established with the help of a single artificial point source.\\

\begin{theorem}\label{uni-phaseless01}
Let $\mathbf{z}\in \R^2\ba\ov{\Om},$ $\tau_1\in\R\ba\{0\}$ and $\mathbf{q}\in \mathbb{S}$ be the location, scattering strength and polarization of the artificial point source, respectively. Define again two circular arcs $\mathbb{S}_{\mathbf{q}}$ and $\mathbb{S}^{\perp}_{\mathbf{q}}$ as in \eqref{Sq}.
Assume further that
\be\label{Rdiv>0}
\Re(\ddiv\mathbf{F(y)})\geq0 \quad\mbox{and}\quad \exists\, \mathbf{y_0}\in\R^2\,\, s.t.\,\, \Re(\ddiv\mathbf{F(y_0)})>0.
\en
Then the source $\mathbf{F}$ is uniquely determined by the phaseless data set
\ben
\mathcal {C}_{p}:=\big\{|u^{\infty}_{\mathbf{F\cup\{z\}},p}(\mathbf{\hx, q}, \om, \tau)|:\,\mathbf{\hx}\in \mathbb{S}_{\mathbf{q}},\,\om\in \mathbb{W},\, \tau\in \mathcal \{0, \tau_1\}\big\}
\enn
and
\ben
\mathcal {C}_{s}:=\big\{|u^{\infty}_{\mathbf{F\cup\{z\}},s}(\mathbf{\hx, q}, \om, \tau)|:\,\mathbf{\hx}\in \mathbb{S}^{\perp}_{\mathbf{q}},\,\om\in \mathbb{W},\, \tau\in \mathcal \{0, \tau_1\}\big\}.
\enn
\end{theorem}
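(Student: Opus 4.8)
Suppose $\mathbf{G}$ is any source with compact support satisfying the positivity hypothesis \eqref{Rdiv>0} and producing the same phaseless data $\mathcal{C}_p$ and $\mathcal{C}_s$ as $\mathbf{F}$; the plan is to show $\mathbf{G}=\mathbf{F}$, which is exactly the asserted uniqueness. I would first repeat the opening computation in the proof of Theorem \ref{uni-phaseless012}, expanding $|u^\infty_{\mathbf{F}\cup\{\mathbf{z}\},p}|^2$ by means of \eqref{uinfFpz0}. Because here $\tau_1\in\R\ba\{0\}$, the cross term reduces to $2(\mathbf{q\cdot\hx})\,\tau_1\,\Re\big(u^\infty_{\mathbf{F},p}(\mathbf{\hx},\om)\,e^{ik_p\mathbf{\hx\cdot z}}\big)$, and since $\mathbf{q\cdot\hx}\geq 1/2$ on $\mathbb{S}_{\mathbf{q}}$ this real quantity is recovered from $\mathcal{C}_p$ (the $\tau=\tau_1$ part) together with $|u^\infty_{\mathbf{F},p}|$ (the $\tau=0$ part); the same holds for $\mathbf{G}$. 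Knowing the modulus $|u^\infty_{\mathbf{G},p}|=|u^\infty_{\mathbf{F},p}|$ and the real part $\Re(e^{ik_p\mathbf{\hx\cdot z}}u^\infty_{\mathbf{G},p})=\Re(e^{ik_p\mathbf{\hx\cdot z}}u^\infty_{\mathbf{F},p})$ confines $u^\infty_{\mathbf{G},p}(\mathbf{\hx},\om)$, at each point of $\mathbb{S}_{\mathbf{q}}\times\mathbb{W}$, to exactly two candidates: $u^\infty_{\mathbf{F},p}$ itself or its reflection $e^{-2ik_p\mathbf{\hx\cdot z}}\,\ov{u^\infty_{\mathbf{F},p}}$. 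This is the analogue of \eqref{case1}--\eqref{case2}, but with a single source point $\mathbf{z}$ the $\mathbf{z}$-dependence trick used there to discard \eqref{case2} is no longer available, so the reflected branch must be removed by \eqref{Rdiv>0}.

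The key step is to recognise the reflected branch as a genuine far field. Putting $\wi{\mathbf{F}}(\mathbf{y}):=\ov{\mathbf{F}(2\mathbf{z}-\mathbf{y})}$ and substituting $\mathbf{w}=2\mathbf{z}-\mathbf{y}$ in \eqref{uinfFp}--\eqref{uinfFs} gives $u^\infty_{\wi{\mathbf{F}},p}=e^{-2ik_p\mathbf{\hx\cdot z}}\,\ov{u^\infty_{\mathbf{F},p}}$ and $u^\infty_{\wi{\mathbf{F}},s}=e^{-2ik_s\mathbf{\hx\cdot z}}\,\ov{u^\infty_{\mathbf{F},s}}$; using $\tau_1\in\R$ one checks that $\wi{\mathbf{F}}$ generates the same data $\mathcal{C}_p,\mathcal{C}_s$ as $\mathbf{F}$. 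Thus the reflected branch is precisely the far field of the competitor $\wi{\mathbf{F}}$. To pass from the pointwise dichotomy to a global one I would use analyticity: the product $(u^\infty_{\mathbf{G},p}-u^\infty_{\mathbf{F},p})(u^\infty_{\mathbf{G},p}-u^\infty_{\wi{\mathbf{F}},p})$ vanishes at every point of $\mathbb{S}_{\mathbf{q}}\times\mathbb{W}$, and as a function that is analytic in the angular variable of $\mathbf{\hx}$ and in $\om$ it must vanish identically on $\mathbb{S}\times\mathbb{W}$; since such analytic functions form an integral domain on the connected parameter set, one factor vanishes identically, i.e. $u^\infty_{\mathbf{G},p}\equiv u^\infty_{\mathbf{F},p}$ or $u^\infty_{\mathbf{G},p}\equiv u^\infty_{\wi{\mathbf{F}},p}$, and likewise for the shear component.

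Now the positivity hypothesis performs the selection on the compressional part. Because conjugation and differentiation commute while the map $\mathbf{y}\mapsto 2\mathbf{z}-\mathbf{y}$ reverses first derivatives, one gets $\ddiv\wi{\mathbf{F}}(\mathbf{y})=-\ov{(\ddiv\mathbf{F})(2\mathbf{z}-\mathbf{y})}$, hence $\Re(\ddiv\wi{\mathbf{F}})\leq 0$ with strict negativity at $2\mathbf{z}-\mathbf{y_0}$. Integrating \eqref{uinfFp} by parts shows $u^\infty_{\mathbf{F},p}=-\tfrac{i}{k_p}\widehat{\ddiv\mathbf{F}}(k_p\mathbf{\hx})$, so as $(\mathbf{\hx},\om)$ ranges over $\mathbb{S}\times\mathbb{W}$ the argument $k_p\mathbf{\hx}$ fills an annulus and the identity $u^\infty_{\mathbf{G},p}\equiv u^\infty_{\wi{\mathbf{F}},p}$ forces $\ddiv\mathbf{G}=\ddiv\wi{\mathbf{F}}$; this would give $\Re(\ddiv\mathbf{G})\leq 0$ with strict inequality somewhere, contradicting \eqref{Rdiv>0} for $\mathbf{G}$. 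Therefore $u^\infty_{\mathbf{G},p}\equiv u^\infty_{\mathbf{F},p}$ and the compressional set $\mathcal{A}_p$ is recovered.

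The step I expect to be the main obstacle is the shear component, because \eqref{Rdiv>0} constrains only $\ddiv\mathbf{F}$ and therefore cannot by itself rule out the branch $u^\infty_{\mathbf{G},s}\equiv u^\infty_{\wi{\mathbf{F}},s}$: taken in isolation, a ``mixed'' pattern (compressional unreflected, shear reflected) is still consistent with $\mathcal{C}_p,\mathcal{C}_s$ and with \eqref{Rdiv>0}. The resolution I would aim for is that the two reflections cannot be decoupled once $\mathbf{G}$ is required to be a single compactly supported source: such a field is determined by the pair $(\ddiv\mathbf{G},\ddiv^{\perp}\mathbf{G})$, and having already fixed $\ddiv\mathbf{G}=\ddiv\mathbf{F}$, the difference $\mathbf{G}-\mathbf{F}$ would be a compactly supported divergence-free field with curl $\ddiv^{\perp}\wi{\mathbf{F}}-\ddiv^{\perp}\mathbf{F}$; the compact-support (Paley--Wiener) compatibility of the prescribed divergence and curl on the null variety $\{\xi_1^2+\xi_2^2=0\}$ then ties the shear reflection to the compressional one, leaving $\mathbf{F}$ and $\wi{\mathbf{F}}$ as the only admissible competitors. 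Excluding $\wi{\mathbf{F}}$ as before yields $u^\infty_{\mathbf{G},s}\equiv u^\infty_{\mathbf{F},s}$, so $\mathcal{A}_s$ is recovered and Theorem \ref{uni-full} completes the argument.
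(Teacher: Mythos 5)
Your compressional half is, modulo bookkeeping, the paper's own proof. The paper likewise recovers the cross term $\Re\big(u^{\infty}_{\mathbf{F},p}(\hx,\om)\,e^{ik_p\hx\cdot\mathbf{z}}\big)$ from the $\tau=0$ and $\tau=\tau_1$ data, meets the same identity-versus-conjugate dichotomy, and excludes the conjugate branch by exactly your computation: the paper's \eqref{f1f2}--\eqref{S1S2} read $\ddiv\mathbf{F}_1(\mathbf{y})=-\ddiv_{\mathbf{y}^{\ast}}\ov{\mathbf{F}_2(\mathbf{y}^{\ast})}$, which is your relation $\ddiv\wi{\mathbf{F}}=-\ov{(\ddiv\mathbf{F})(2\mathbf{z}-\cdot)}$ after the paper's normalization (it translates by $\mathbf{z}$ and reflects through the origin, you reflect through $\mathbf{z}$; the two are the same). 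Your globalization of the pointwise dichotomy via the integral-domain property of real-analytic functions is actually cleaner than the paper's handling of the principal arguments $\phi_1,\phi_2\in[0,2\pi)$, which needs the separate case $\phi_1\equiv\pi$ and is delicate at zeros of the far field.

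The gap is exactly where you located it, but your proposed repair fails, and the failure can be exhibited explicitly. Compact support of $\mathbf{G}$ constrains the pair $(\widehat{\ddiv\mathbf{G}},\widehat{\ddiv^{\perp}\mathbf{G}})$ only on the complex variety $\{\xi_1^2+\xi_2^2=0\}$, where $\xi^{\perp}$ is parallel to $\xi$; off that variety the divergence and the curl are completely decoupled, so no Paley--Wiener argument ties the shear reflection to the compressional one. Concretely, take $h_1,h_2$ real, smooth, compactly supported and set
\ben
\mathbf{F}=\ggrad h_1+\ggrad^{\perp}h_2,\qquad
\mathbf{G}=\ggrad h_1+\ggrad^{\perp}\wi{h}_2,\qquad \wi{h}_2(\mathbf{y}):=-h_2(2\mathbf{z}-\mathbf{y}).
\enn
Then $\mathbf{G}$ is compactly supported, $\ddiv\mathbf{G}=\ddiv\mathbf{F}=\Delta h_1$ and $\ddiv^{\perp}\mathbf{G}=\Delta\wi{h}_2=\ddiv^{\perp}\wi{\mathbf{F}}$; since the compressional and shear far fields depend only on $\ddiv$ and $\ddiv^{\perp}$ respectively (integrate \eqref{uinfFp}--\eqref{uinfFs} by parts), this gives $u^{\infty}_{\mathbf{G},p}\equiv u^{\infty}_{\mathbf{F},p}$ and $u^{\infty}_{\mathbf{G},s}\equiv e^{-2ik_s\hx\cdot\mathbf{z}}\,\ov{u^{\infty}_{\mathbf{F},s}}$, and because $\tau_1$ and $\mathbf{q}\cdot\hx^{\perp}$ are real, $\mathbf{G}$ generates exactly the same data sets $\mathcal{C}_p$ and $\mathcal{C}_s$ as $\mathbf{F}$. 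Yet for generic $h_1,h_2$ (e.g.\ both nonnegative and nonzero) one has $\mathbf{G}\neq\mathbf{F}$ and $\mathbf{G}\neq\wi{\mathbf{F}}$: the ``mixed'' competitor is a perfectly good compactly supported source. Excluding it requires a sign hypothesis on $\Re(\ddiv^{\perp}\mathbf{F})$ parallel to \eqref{Rdiv>0}, which the theorem does not assume. You should know this lacuna is inherited from the paper itself: its proof disposes of the shear component with a bare ``similarly'', and that word is precisely the unjustified step. (A separate defect of the statement, affecting both arguments: a compactly supported $\mathbf{F}$ satisfies $\int_{\R^2}\ddiv\mathbf{F}\,d\mathbf{y}=0$, so nonnegativity of $\Re(\ddiv\mathbf{F})$ together with strict positivity at a point is impossible; the hypothesis class of \eqref{Rdiv>0} is empty, and the contradiction steps in either proof are never actually exercised.)
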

\begin{proof}
Following the arguments in Theorem \ref{uni-phaseless012}, noting that $\tau_1\in\R\ba\{0\}$, we deduce that
\ben
\Re\big(u^{\infty}_{\mathbf{F},p}(\mathbf{\hx}, \om)e^{ik_p\mathbf{\hx\cdot z}}\big),\quad \hx \in \mathbb{S}_{\mathbf{q}},\,\om\in\mathbb{W}
\enn
is uniquely determined by the data set $\mathcal{C}_p$.
For the fixed point $\mathbf{z}\in \R^{2}\ba\ov{\Om}$, define $\mathbf{F_{z}(y)}:=\mathbf{F(y+z)}, \mathbf{y}\in \R^2$. Then by the translation relation \eqref{uFp}, we have
\ben
u^{\infty}_{\mathbf{F_{z}},p}(\hat{\mathbf{x}},\om)=e^{ik_p\mathbf{\hx\cdot z}}u^{\infty}_{\mathbf{F},p}(\hat{\mathbf{x}},\om), \quad \hat{\mathbf{x}}\in\mathbb{S}_{\mathbf{q}},\,\om\in \mathbb{W}.
\enn
Therefore,
\ben
|u^{\infty}_{\mathbf{F_{z}},p}(\hat{\mathbf{x}},\om)|=|u^{\infty}_{\mathbf{F},p}(\hat{\mathbf{x}},\om)|, \quad \hat{\mathbf{x}}\in\mathbb{S}_{\mathbf{q}},\,\om\in \mathbb{W}
\enn
and $\Re\big(u^{\infty}_{\mathbf{F_{z}},p}(\hat{\mathbf{x}},\om)\big)$ is also uniquely determined for all $\hx\in \mathbb{S}_{\mathbf{q}},\,\om\in\mathbb{W}$.
We rewrite $u^{\infty}_{\mathbf{F_{z}},p}(\hat{\mathbf{x}},\om)$ in the form
\be\label{uinfphi}
u^{\infty}_{\mathbf{F_{z}},p}(\hat{\mathbf{x}},\om)=|u^{\infty}_{\mathbf{F},p}(\hat{\mathbf{x}},\om)|e^{i\phi(\mathbf{\hx},\om)},\quad \hat{\mathbf{x}}\in\mathbb{S}_{\mathbf{q}},\,\om\in \mathbb{W}
\en
with some analytic function $\phi$ such that $\phi(\mathbf{\hx},\om)\in [0,2\pi)$. We claim that $\phi$ is uniquely determined.
Assume on the contrary that there are two functions $\phi_1$ and $\phi_2$.
To simplify the notations, for any $\om\in\mathbb{W}$, we define
\ben
\mathbb{S}_{\mathbf{q}}(\om):=\{\mathbf{\hx}\in \mathbb{S}_{\mathbf{q}}: u^{\infty}_{\mathbf{F},p}(\hat{\mathbf{x}},\om)\neq 0\}, \quad\om\in\mathbb{W}.
\enn
Then by analyticity of $u^{\infty}_{\mathbf{F},p}(\hat{\mathbf{x}},\om)$, we deduce that the set $\mathbb{S}_{\mathbf{q}}\ba\mathbb{S}_{\mathbf{q}}(\om)$ has Lebesgue measure zero.
Since the phaseless far field pattern $|u^{\infty}_{\mathbf{F},p}(\hat{\mathbf{x}},\om)|$ is given and $\Re\big(u^{\infty}_{\mathbf{F_{z}},p}(\hat{\mathbf{x}},\om)\big)$ is obtained for all $\mathbf{\hx}\in \mathbb{S}_{\bf q}, \om\in\mathbb{W}$, we have
\be\label{cos1cos2}
\cos[\phi_1(\hat{\mathbf{x}},\om)]=\cos[\phi_2(\hat{\mathbf{x}},\om)], \quad \mathbf{\hx}\in \mathbb{S}_{\mathbf{q}}(\om), \,\om\in\mathbb{W}.
\en
This implies that for any fixed $\mathbf{\hx}\in \mathbb{S}_{\mathbf{q}}(\om), \,\om\in\mathbb{W}$, $\phi_1(\hat{\mathbf{x}},\om)=\phi_2(\hat{\mathbf{x}},\om)$ or $\phi_1(\hat{\mathbf{x}},\om)=2\pi-\phi_2(\hat{\mathbf{x}},\om)$  since $\phi_1(\hat{\mathbf{x}},\om), \phi_2(\hat{\mathbf{x}},\om)\in [0,2\pi)$.

We claim that
\be\label{phi1phi2}
\phi_1(\hat{\mathbf{x}},\om)\equiv\phi_2(\hat{\mathbf{x}},\om), \quad \mathbf{\hx}\in \mathbb{S}_{\mathbf{q}}(\om), \,\om\in\mathbb{W},
\en
or
\be\label{phi1-phi2}
\phi_1(\hat{\mathbf{x}},\om)\equiv2\pi-\phi_2(\hat{\mathbf{x}},\om), \quad \mathbf{\hx}\in \mathbb{S}_{\mathbf{q}}(\om), \,\om\in\mathbb{W}.
\en
For the special case $\phi_1(\hat{\mathbf{x}},\om)\equiv\pi$ for all $\mathbf{\hx}\in \mathbb{S}_{\mathbf{q}}(\om), \,\om\in\mathbb{W}$, we have $\phi_2\equiv\phi_1=\pi$ by \eqref{cos1cos2}. Otherwise, without loss of generality, there exists a direction $\mathbf{\hx}_1\in \mathbb{S}_{\mathbf{q}}(\om_1)$ for some frequency $\om_1\in\mathbb{W}$ such that
$\phi_1(\mathbf{\hx}_1,\om_1)>\pi$. By analyticity of the function $\phi_1$, there exists a neighbourhood $\mathbb{U}(\mathbf{\hx}_1)\subset \mathbb{S}_{\mathbf{q}}(\om_1)$ of $\mathbf{\hx}_1$ such that
\be\label{phi1>pi}
\phi_1(\mathbf{\hx},\om_1)>\pi,\quad\forall \mathbf{\hx}\in \mathbb{U}(\mathbf{\hx}_1).
\en
If $\phi_2(\mathbf{\hx}_1,\om_1)=\phi_1(\mathbf{\hx}_1,\om_1)>\pi$, then $\phi_2(\mathbf{\hx},\om_1)=\phi_1(\mathbf{\hx},\om_1),\quad\forall \mathbf{\hx}\in \mathbb{U}(\mathbf{\hx}_1)$.
Otherwise, there exists a  direction $\mathbf{\hx}_2\in \mathbb{U}(\mathbf{\hx}_1)$ such that $\phi_2(\mathbf{\hx}_2,\om_1)=2\pi-\phi_1(\mathbf{\hx}_2,\om_1)$.
From \eqref{phi1-phi2}-\eqref{phi1>pi}, we have $\phi_2(\mathbf{\hx}_2,\om_1)=2\pi-\phi_1(\mathbf{\hx}_2,\om_1)<\pi$. By analyticity of $\phi_2$ in $\mathbb{U}(\mathbf{\hx}_1)$,
there exists a direction $\mathbf{\hx}_0\in \mathbb{U}(\mathbf{\hx}_1)$ such that $\phi_2(\mathbf{\hx}_0,\om_1)=\pi$. Thus $\phi_1(\mathbf{\hx}_0,\om_1)=\phi_2(\mathbf{\hx}_0,\om_1)=\pi$
by \eqref{cos1cos2}. However, this leads to a contradiction to \eqref{phi1>pi}.

We show that \eqref{phi1-phi2} does not hold. Let $\mathbf{F}_1$ and $\mathbf{F}_2$ be sources corresponding to $\phi_1$ and $\phi_2$, respectively. With the help of unique continuation, from \eqref{uinfphi} and \eqref{phi1-phi2} we have
\ben
u^{\infty}_{\mathbf{F}_1,p}(\mathbf{\hx}, \om)= \ov{u^{\infty}_{\mathbf{F}_2,p}(\mathbf{\hx}, \om)}, \quad \mathbf{\hx}\in \mathbb{S}_{\mathbf{q}}(\om), \,\om\in\mathbb{W}.
\enn
By the representation \eqref{uinfFp}, noting that the source terms $\mathbf{F}_1$ and $\mathbf{F}_2$ are supported in $\Om$ and using Gauss divergence theorem, we have
\be\label{f1f2}
\int_{\R^2}e^{-ik_p\mathbf{\hx\cdot y}}\ddiv\mathbf{F_1(y)}d\mathbf{y}
&=&-\int_{\R^2}\grad_{\mathbf{y}}e^{-ik_p\mathbf{\hx\cdot y}}\cdot\mathbf{F_1(y)}d\mathbf{y}\cr
&=&ik_p\int_{\R^2}e^{-ik_p\mathbf{\hx\cdot y}}\mathbf{\hx\cdot F_1(y)}d\mathbf{y}\cr
&=&ik_p\ov{\int_{\R^2}e^{-ik_p\mathbf{\hx\cdot y}}\mathbf{\hx\cdot F_2(y)}d\mathbf{y}}\cr
&=&\int_{\R^2}\grad_{\mathbf{y}}e^{ik_p\mathbf{\hx\cdot y}}\cdot\ov{\mathbf{F_2(y)}}d\mathbf{y}\cr
&=&-\int_{\R^2}e^{ik_p\mathbf{\hx\cdot y}}\ddiv_{\mathbf{y}}\ov{\mathbf{F_2(y)}}d\mathbf{y}\cr
&=& \int_{\R^2}e^{-ik_p\mathbf{\hx\cdot y}}\ddiv_{\mathbf{y}}\mathbf{\ov{F_2(-y)}}d\mathbf{y}, \quad \mathbf\hx\in \mathbb{S}_{\mathbf{q}}, \om\in\mathbb{W}.
\en
Clearly, both sides are analytic on the frequency $\om$ and the observation direction $\mathbf{\hx}$ and thus \eqref{f1f2} holds for all $\mathbf{\hx}\in \mathbb{S},\, \om\in [0,\infty)$.
By the Fourier integral theorem, we conclude that
\be\label{S1S2}
\ddiv\mathbf{F_1(y)}=\ddiv_{\mathbf{y}}\mathbf{\ov{F_2(-y)}}=-\ddiv_{\mathbf{y^\ast}}\mathbf{\ov{F_2(y^{\ast})}}, \quad \mathbf{y^{\ast}}:=-\mathbf{y},\quad \mathbf{y}\in \R^2.
\en
Our assumption \eqref{Rdiv>0} implies that
\ben
\Re(\ddiv\mathbf{F_1(y_0)}) >0,\quad  \Re(\ddiv_{\mathbf{y^\ast}}\mathbf{\ov{F_2(y^{\ast})}})\geq 0, \,\forall \mathbf{y}\in \R^2.
\enn
This is a contradiction to \eqref{S1S2}. Thus \eqref{phi1-phi2} does not hold, and we deduce that \eqref{phi1phi2}, i.e.,
$\phi_1=\phi_2$. Thus $u^{\infty}_{\mathbf{F_{-z}},p}(\hat{\mathbf{x}},\om)$ and $u^{\infty}_{\mathbf{F},p}(\hat{\mathbf{x}},\om)$ are uniquely determined by the data set $\mathcal{C}_p$ for all $\hat{\mathbf{x}}\in\mathbb{S}_{\mathbf{q}},\,\om\in \mathbb{W}$.

Similarly, we deduce that the shear far field pattern $u^{\infty}_{\mathbf{F},s}(\hat{\mathbf{x}},\om)$ is also uniquely determined by the data set $\mathcal{C}_s$ for all $\hat{\mathbf{x}}\in\mathbb{S}_{\mathbf{q}}^{\perp},\,\om\in \mathbb{W}$.
The conclusion of the theorem now follows again from Theorem \ref{uni-full}.
\end{proof}

Theorem \ref{uni-phaseless012} also holds if the assumption \eqref{Rdiv>0} is replaced by
\be\label{Rdiv<0}
\Re(\ddiv\mathbf{F(y)})\leq0 \quad\mbox{and}\quad \exists\, \mathbf{y_0}\in\R^2\,\, s.t.\,\, \Re(\ddiv\mathbf{F(y_0)})<0.
\en
We think such an assumption can be removed. But this need different techniques.

In Theorems \ref{uni-phaseless012} and \ref{uni-phaseless01}, the phaseless data set includes the data with scattering strength $\tau=0$.
For diversity, we now prove a uniqueness theorem with three different scattering strengths.
Define
\ben
\mathcal {T}:= \{\tau_1,\tau_2,\tau_3\},
\enn
where $\tau_1, \tau_2, \tau_3\in\C$ are three different scattering strengths such that $\tau_2-\tau_1$ and $\tau_3-\tau_1$ are linearly independent.\\

\begin{theorem}\label{uni-phaseless123}
Fix $\mathbf{z}\in \R^2\ba\ov{\Om}$  and $\mathbf{q}\in \mathbb{S}$. Define again two circular arcs $\mathbb{S}_{\mathbf{q}}$ and $\mathbb{S}^{\perp}_{\mathbf{q}}$ as in \eqref{Sq}.
Then the source $\mathbf{F}$ is uniquely determined by the phaseless data set
\ben
\mathcal {D}_{p}:=\big\{|u^{\infty}_{\mathbf{F\cup\{z\}},p}(\mathbf{\hx, q}, \om, \tau)|:\,\mathbf{\hx}\in \mathbb{S}_{\mathbf{q}},\,\om\in \mathbb{W},\, \tau\in \mathcal {T}\big\}
\enn
and
\ben
\mathcal {D}_{s}:=\big\{|u^{\infty}_{\mathbf{F\cup\{z\}},s}(\mathbf{\hx, q}, \om, \tau)|:\,\mathbf{\hx}\in \mathbb{S}^{\perp}_{\mathbf{q}},\,\om\in \mathbb{W},\, \tau\in\mathcal {T}\big\}.
\enn
\end{theorem}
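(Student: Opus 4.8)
The plan is to reduce the phaseless problem, one observation direction and one frequency at a time, to the purely algebraic problem of recovering a single complex number from its distances to three fixed points, and then to appeal to Theorem \ref{uni-full}. Fix $\mathbf{\hx}\in\mathbb{S}_{\mathbf{q}}$ and $\om\in\mathbb{W}$, and abbreviate $a:=u^{\infty}_{\mathbf{F},p}(\mathbf{\hx},\om)$ and $c:=e^{-ik_p\mathbf{\hx\cdot z}}(\mathbf{q\cdot\hx})$. By the representation \eqref{uinfFpz0}, the datum in $\mathcal{D}_p$ attached to the strength $\tau_j$ is exactly $|a+\tau_j c|$, $j=1,2,3$; that is, it is the distance in the complex plane from the unknown point $a$ to the three known points $-\tau_1 c,-\tau_2 c,-\tau_3 c$.

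First I would verify that these three reference points are genuinely non-collinear. Because $\mathbf{q\cdot\hx}\geq 1/2$ on $\mathbb{S}_{\mathbf{q}}$, the factor $c$ is nonzero, and multiplication by a nonzero complex number is an invertible $\R$-linear map of $\C\cong\R^2$; hence it carries the $\R$-linearly independent pair $\tau_2-\tau_1,\,\tau_3-\tau_1$ to the $\R$-linearly independent pair $c(\tau_2-\tau_1),\,c(\tau_3-\tau_1)$, so $-\tau_1 c,-\tau_2 c,-\tau_3 c$ do not lie on a common line. This is precisely where the restriction to the arc $\mathbb{S}_{\mathbf{q}}$ is indispensable: it excludes the degenerate directions $\mathbf{q\cdot\hx}=0$ at which $c$ vanishes, the three reference points collapse to the origin, and the phase of $a$ cannot be recovered.

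Next I would invoke the elementary planar fact that a point is uniquely determined by its distances to three non-collinear points. Subtracting the squared-distance data $|a+\tau_j c|^2$ pairwise cancels the common $|a|^2$ term and leaves two real affine equations for $a$ whose normal directions are proportional to $c(\tau_2-\tau_1)$ and $c(\tau_3-\tau_1)$; linear independence of these normals forces a unique intersection point, so $a=u^{\infty}_{\mathbf{F},p}(\mathbf{\hx},\om)$ is recovered for every $\mathbf{\hx}\in\mathbb{S}_{\mathbf{q}}$ and $\om\in\mathbb{W}$. Since $u^{\infty}_{\mathbf{F},p}(\cdot,\om)$ is analytic on $\mathbb{S}$, its values on the arc $\mathbb{S}_{\mathbf{q}}$ determine it on all of $\mathbb{S}$, and thus the full set $\mathcal{A}_p$ of \eqref{Ap} is recovered from $\mathcal{D}_p$. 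The shear case is identical after replacing $k_p$ by $k_s$, $\mathbf{q\cdot\hx}$ by $\mathbf{q\cdot\hx^{\perp}}$, and $\mathbb{S}_{\mathbf{q}}$ by $\mathbb{S}^{\perp}_{\mathbf{q}}$ in \eqref{uinfFsz0}, where now $\mathbf{q\cdot\hx^{\perp}}\geq 1/2$ supplies the analogous non-degeneracy and yields $\mathcal{A}_s$ of \eqref{As}. With $\mathcal{A}_p$ and $\mathcal{A}_s$ determined, Theorem \ref{uni-full} gives uniqueness of $\mathbf{F}$.

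The only genuinely delicate point is the non-collinearity established in the second step; everything downstream is then automatic. In contrast to Theorems \ref{uni-phaseless012} and \ref{uni-phaseless01}, this argument uses neither a second source location nor a sign condition such as \eqref{Rdiv>0}: the three distinct strengths $\tau_1,\tau_2,\tau_3$ already furnish, at a single $\mathbf{z}$, enough independent distance data to pin down both the modulus and the phase of $a$ at once. I would expect the main obstacle in any quantitative refinement to be not uniqueness but the stability of the three-distance inversion as $\mathbf{q\cdot\hx}$ (respectively $\mathbf{q\cdot\hx^{\perp}}$) nears its lower bound, which is exactly what the threshold $1/2$ in \eqref{Sq} is chosen to keep under control.
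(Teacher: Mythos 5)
Your proposal is correct and takes essentially the same route as the paper's proof: both reduce the phaseless data, for each fixed $\mathbf{\hx}$ and $\om$, to recovering a complex number from its distances to three non-collinear reference points (guaranteed by the linear independence of $\tau_2-\tau_1$ and $\tau_3-\tau_1$ and the nonvanishing of $\mathbf{q\cdot\hx}$ on $\mathbb{S}_{\mathbf{q}}$), then extend by analyticity and conclude with Theorem \ref{uni-full}. The only cosmetic difference is that the paper divides the datum by $\mathbf{q\cdot\hx}$ so that the reference points become $-\tau_j$ and Lemma \ref{phaseretrieval} applies verbatim, whereas you keep the unknown as $u^{\infty}_{\mathbf{F},p}(\mathbf{\hx},\om)$, scale the reference points to $-\tau_j c$, and re-derive the three-distance uniqueness by subtracting squared distances.
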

\begin{proof}
Recalling the representation \eqref{uinfFpz0} and noting that $\mathbf{\hx\cdot q}\geq 1/2$ for $\mathbf{\hx}\in\mathbb{S}_{\mathbf{q}}$, we deduce that
\ben
&&\frac{1}{\mathbf{\hx\cdot q}}\Big|u^{\infty}_{\mathbf{F}\cup\{\mathbf{z}\},p}(\mathbf{\hx,q},\om,\tau)\Big|\cr
&=&\frac{1}{\mathbf{\hx\cdot q}}\Big|u^{\infty}_{\mathbf{F},p}(\mathbf{\hx},\om)+\tau e^{-ik_p\mathbf{\hx\cdot z}}\mathbf{\hx\cdot q}\Big|\cr
&=&\Big|\frac{u^{\infty}_{\mathbf{F},p}(\mathbf{\hx},\om)e^{ik_p\mathbf{\hx\cdot z}}}{\mathbf{\hx\cdot q}}+\tau\Big|,\quad \mathbf{\hx}\in \mathbb{S}_{\mathbf{q}},\,\om\in \mathbb{W},\, \tau\in \mathcal {T}.
\enn
Define $z_j:=-\tau_{j},\,j=1,2,3$. Using Lemma \ref{phaseretrieval} we find that $\frac{u^{\infty}_{\mathbf{F},p}(\mathbf{\hx},\om)e^{ik_p\mathbf{\hx\cdot z}}}{\mathbf{\hx\cdot q}}$ is uniquely determined for all $\mathbf{\hx}\in \mathbb{S}_{\mathbf{q}},\,\om\in \mathbb{W}$. By analyticity, we also deduce that $u^{\infty}_{\mathbf{F},p}(\mathbf{\hx},\om)$ is uniquely determined for all $\mathbf{\hx}\in \mathbb{S},\,\om\in \mathbb{W}$.

Similarly, the shear far field pattern $u^{\infty}_{\mathbf{F},s}(\mathbf{\hx},\om)$ is uniquely determined for all $\mathbf{\hx}\in \mathbb{S},\,\om\in \mathbb{W}$ from the data set $\mathcal{D}_s$. Then the proof is finished by Theorem \ref{uni-full}.
\end{proof}

In the previous uniqueness results, we determine the source from both the compressional and shear far field data. A challenging open problem is whether one of the compressional and shear far field data completely determines the source.
We are also interested in broadband sparse measurements, i.e., the data can only be measured in finitely many observation directions,
\ben
\{\mathbf{\hx_1, \hx_2, \cdots, \hx_M}\}=:\Theta\subset \mathbb{S}.
\enn
In particular, we are interested in what information can be obtained using multi-frequency data with a single observation direction.
For a bounded domain $\Om$, the $\mathbf{\hx}$-strip hull of $\Om$ for a single observation direction $\mathbf{\hx} \in \,\Theta$ is defined by
\ben
S_{\Om}(\mathbf{\hx}):=  \{ \mathbf{y}\in \mathbb \R^{2}\; | \; \inf_{\mathbf{z}\in \Om}\mathbf{z}\cdot \mathbf{\hx} \leq \mathbf{y}\cdot \mathbf{\hx}\leq \sup_{\mathbf{z}\in \Om}\mathbf{z}\cdot \mathbf{\hx}\},
\enn
which is the smallest strip (region between two parallel lines) with normals in the directions $\pm \mathbf{\hx}$ that contains $\overline{\Om}$.
Let
\ben
\Pi_{\alpha}:=\{\mathbf{y}\in \R^2 |\, \mathbf{y}\cdot\mathbf{\hx}+\alpha=0\},\quad \alpha\in\R
\enn
be a line with normal $\mathbf{\hx}$. Define
\be
f_{\mathbf{\hx}}(\alpha):=\int_{\Pi_\alpha}\mathbf{\hx}\cdot \mathbf{F}(\mathbf{y})ds(\mathbf{y}),\quad \alpha\in\R.
\en
The following theorem gives a uniqueness result on determine the strip by using only compressional far field data or shear far field data at a fixed observation direction.
The proof follows the related results for inverse acoustic source scattering problems \cite{AlaHuLiuSun}.

\begin{theorem}\label{uni-strip}
For any fixed $\mathbf{\hx_{0}}\in\mathbb{S}$, choose a polarization $\mathbf{q_0}\in\mathbb{S}$ such that $\mathbf{q_0\cdot\hx_0}\neq 0$.
If the set
\be\label{set2}
\{\alpha\in\R|\, \Pi_\alpha\subset S_\Om(\mathbf{\hx}), f_{\mathbf{\hx_0}}(\alpha)=0\}
\en
has Lebesgue measure zero, then the strip $S_{\Om}(\mathbf{\hx})$ of the source support $\Om$ can be uniquely determined by
the following phaseless data set
\ben
\mathcal {E}_{p}:=\big\{|u^{\infty}_{\mathbf{F\cup\{z\}},p}(\mathbf{\hx_0, q_0}, \om, \tau)|:\,\om\in \mathbb{W},\, \tau\in \mathcal {T}\big\}.
\enn
\end{theorem}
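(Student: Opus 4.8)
The plan is to recover the phased compressional far field $u^\infty_{\mathbf{F},p}(\mathbf{\hx_0},\om)$ at the single direction $\mathbf{\hx_0}$ for every $\om\in\mathbb{W}$, to recognise this family of numbers as the one dimensional Fourier transform of $f_{\mathbf{\hx_0}}$, and then to read the strip off from the support of $f_{\mathbf{\hx_0}}$. For the first part I would argue exactly as in Theorem \ref{uni-phaseless123}, but now at the \emph{fixed} direction $\mathbf{\hx_0}$. Using the representation \eqref{uinfFpz0} together with $\mathbf{\hx_0\cdot q_0}\neq0$, for each $\tau\in\mathcal{T}$ one has
\begin{equation*}
\frac{1}{\mathbf{\hx_0\cdot q_0}}\Big|u^\infty_{\mathbf{F}\cup\{\mathbf{z}\},p}(\mathbf{\hx_0,q_0},\om,\tau)\Big|
=\left|\frac{u^\infty_{\mathbf{F},p}(\mathbf{\hx_0},\om)\,e^{ik_p\mathbf{\hx_0\cdot z}}}{\mathbf{\hx_0\cdot q_0}}+\tau\right|.
\end{equation*}
For each fixed $\om\in\mathbb{W}$ this supplies the three distances from the unknown complex number $c(\om):=u^\infty_{\mathbf{F},p}(\mathbf{\hx_0},\om)e^{ik_p\mathbf{\hx_0\cdot z}}/(\mathbf{\hx_0\cdot q_0})$ to the three points $-\tau_1,-\tau_2,-\tau_3$; since $\tau_2-\tau_1$ and $\tau_3-\tau_1$ are linearly independent these points are not collinear, so Lemma \ref{phaseretrieval} determines $c(\om)$, and hence $u^\infty_{\mathbf{F},p}(\mathbf{\hx_0},\om)$, uniquely for every $\om\in\mathbb{W}$. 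Note that, in contrast to Theorem \ref{uni-phaseless123}, no variation of the observation direction is required: the geometric phase retrieval already succeeds pointwise at $\mathbf{\hx_0}$, and the frequency will play its role only in the next step.

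The second step is to slice the far field integral \eqref{uinfFp} by the level lines of $\mathbf{\hx_0\cdot y}$. Writing $t=\mathbf{\hx_0\cdot y}$ and substituting $\alpha=-t$, so that the slice $\{\mathbf{\hx_0\cdot y}=t\}$ is precisely the line $\Pi_\alpha$, I would obtain
\begin{equation*}
u^\infty_{\mathbf{F},p}(\mathbf{\hx_0},\om)=\int_{\R}e^{-ik_p t}\Big(\int_{\{\mathbf{\hx_0\cdot y}=t\}}\mathbf{\hx_0\cdot F(y)}\,ds\Big)dt=\int_{\R}e^{ik_p\alpha}f_{\mathbf{\hx_0}}(\alpha)\,d\alpha.
\end{equation*}
Thus the quantity recovered in the first step is exactly the one dimensional Fourier transform of $f_{\mathbf{\hx_0}}$ evaluated at $k_p=\om/\sqrt{\la+2\mu}$. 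Because $\mathbf{F}$ is supported in $\Om$, the function $f_{\mathbf{\hx_0}}$ has compact support, so its Fourier transform extends to an entire function of the frequency variable; knowing it on the nonempty interval $\{\om/\sqrt{\la+2\mu}:\om\in\mathbb{W}\}$ therefore pins it down on all of $\R$ by the identity theorem, and Fourier inversion recovers $f_{\mathbf{\hx_0}}(\alpha)$ for every $\alpha\in\R$.

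It remains to pass from $f_{\mathbf{\hx_0}}$ to the strip. Since $\mathbf{F}$ is supported in $\Om$, the line integral $f_{\mathbf{\hx_0}}(\alpha)$ can be nonzero only when $\Pi_\alpha$ meets $\Om$, i.e. only for those $\alpha$ with $\Pi_\alpha\subset S_\Om(\mathbf{\hx})$; hence $\mathrm{supp}\,f_{\mathbf{\hx_0}}$ is contained in the corresponding $\alpha$ interval. The hypothesis that $\{\alpha:\Pi_\alpha\subset S_\Om(\mathbf{\hx}),\ f_{\mathbf{\hx_0}}(\alpha)=0\}$ has Lebesgue measure zero says that $f_{\mathbf{\hx_0}}$ vanishes at most on a null set of this interval, so that $\mathrm{supp}\,f_{\mathbf{\hx_0}}$ fills the whole interval; its two endpoints are then exactly the values of $\alpha$ for which $\Pi_\alpha$ is a bounding line of $S_\Om(\mathbf{\hx})$, and reading them off from the recovered $f_{\mathbf{\hx_0}}$ yields the strip. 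The step I expect to be delicate is precisely this last one: the measure zero hypothesis is exactly what rules out the spurious possibility that $f_{\mathbf{\hx_0}}$ vanishes identically on a subinterval adjacent to an endpoint, which would otherwise cause the reconstructed support, and therefore the detected strip, to be strictly smaller than $S_\Om(\mathbf{\hx})$.
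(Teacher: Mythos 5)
Your proposal is correct and follows essentially the same route as the paper's own proof: pointwise phase retrieval at $\mathbf{\hx_0}$ via Lemma \ref{phaseretrieval} with the three non-collinear strengths (as in Theorem \ref{uni-phaseless123}), identification of $u^{\infty}_{\mathbf{F},p}(\mathbf{\hx_0},\om)$ as the Fourier transform of $f_{\mathbf{\hx_0}}$ extended to all frequencies by analyticity, and recovery of the strip as the closure of $\{\Pi_\alpha : f_{\mathbf{\hx_0}}(\alpha)\neq 0\}$ under the measure-zero hypothesis. The only cosmetic point is that the prefactor in your first display should be $1/|\mathbf{\hx_0\cdot q_0}|$ to cover the case $\mathbf{\hx_0\cdot q_0}<0$, which changes nothing in the argument.
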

\begin{proof}
Essentially, the same arguments as in the proof of Theorem \ref{uni-phaseless123} show that the compressional far field pattern $u^{\infty}_{\mathbf{F},p}(\mathbf{\hx_0}, \om)$
is uniquely determined by $\mathcal {E}_{p}$ for all $\om\in\mathbb{W}$ and for also all $\om\in\R$ by analyticity.
Recall the compressional far field pattern representation \eqref{uinfFp}, we have
\ben
u^{\infty}_{\mathbf{F},p}(\mathbf{\hx_0},\om)
&=&\int_{\R^2}e^{-ik_p\mathbf{\hx_0\cdot y}}\mathbf{\hx_0\cdot F(y)}d\mathbf{y}\cr
&=&\int_{\R}e^{ik_p\alpha}\int_{\Pi_{\alpha}}\mathbf{\hx_0\cdot F(y)}ds(\mathbf{y})d\alpha\cr
&=&\int_{\R}e^{ik_p\alpha}f_{\mathbf{\hx_0}}(\alpha)d\alpha, \quad \om\in\R.
\enn
Clearly, the compressional far field pattern $u^{\infty}_{\mathbf{F},p}(\mathbf{\hx_0},\om)$ is just the inverse Fourier transform of $f_{\mathbf{\hx_0}}$. This implies that $f_{\mathbf{\hx_0}}$ is uniquely determined by $u^{\infty}_{\mathbf{F},p}(\mathbf{\hx_0},\om), \om\in\mathbb{W}$. Under the assumption that the set in \eqref{set2} has Lebesgue measure zero, it is seen that
\ben
S_{\Om}(\mathbf{\hx})=\ov{\bigcup_{\alpha\in\R}\{\Pi_{\alpha} |\, f_{\mathbf{\hx_0}}(\alpha)\neq 0\}}
\enn
which implies that the strip $S_{\Om}(\mathbf{\hx})$ is uniquely determined by $f_{\mathbf{\hx_0}}$, and also by $u^{\infty}_{\mathbf{F},p}(\mathbf{\hx_0},\om)$, $\om\in\mathbb{W}$. The proof is complete.
\end{proof}

We end this subsection by the following remarks:
\begin{itemize}{\em
  \item Unique determination of the strip can also be established by using the phaseless shear far field data
\ben
\mathcal {E}_{s}:=\big\{|u^{\infty}_{\mathbf{F\cup\{z\}},s}(\mathbf{\hx_0, q_0}, \om, \tau)|:\,\om\in \mathbb{W},\, \tau\in \mathcal {T}\big\},
\enn
where the polarization $\mathbf{q_0}$ should be chosen such that $\mathbf{\hx_0}^{\perp}\cdot\mathbf{q_0}\neq 0$.
  \item Theorem \ref{uni-strip} is not true in general if the set in \eqref{set2} has positive Lebesgue measure.
  For example, for $\mathbf{y}=(y_1,y_2)^{T}\in\R^2$,
  we consider
  \be\label{f1}
  \mathbf{F}_1(\mathbf{y})=\left\{
         \begin{array}{ll}
           (1,0)^{T}, & \hbox{$y_1\in (-1,1), y_2\in [1,2)$;} \\
           (y_1, 0)^{T}, & \hbox{$y_1\in (-1,1), y_2\in (-1,1)$;} \\
           (1,0)^{T}, & \hbox{$y_1\in (-1,1), y_2\in (-2,-1]$;} \\
           (0,0)^{T}, & \hbox{otherwise.}
         \end{array}
       \right.
  \en
  and
  \be\label{f2}
  \mathbf{F}_2(\mathbf{y})=\left\{
         \begin{array}{ll}
           (1,0)^{T}, & \hbox{$y_1\in (-1,1), y_2\in [1,2)$;} \\
           (1,0)^{T}, & \hbox{$y_1\in (-1,1), y_2\in (-2,-1]$;}\\
           (0,0)^{T}, & \hbox{otherwise.}
         \end{array}
       \right.
  \en
  Then $\mathbf{F}_1\in [L^2(\R^2)]^2$ has compact support in $D_1=[-1,1]\times[-2,2]$, and $\mathbf{F}_2\in [L^2(\R^2)]^2$ has compact support in $D_2:=D_2^{(1)}\cup D_2^{(2)}$, where $D_2^{(1)}=[-1,1]\times[-2,-1]$ and $D_2^{(2)}=[-1,1]\times[1,2]$.
  Straightforward calculations show that the corresponding compressional far field patterns corresponding
  to these two different sources coincide for all frequencies at the fixed observation direction $\mathbf{\hx}=(1,0)^{T}$.
  \item A by-product of Theorem \ref{uni-strip} is that one may determine a convex hull of the source support $\Om$ by using two or more observation directions, which is verified in Section \ref{Numericals} numerically.}
\end{itemize}\quad\\

\subsection{Uniqueness for obstacles}
Difficulties arise for the uniqueness results for {\bf Phaseless-IP1} because of the additional far field pattern $v^{\infty}_{\Om}$ due to point sources, i.e., scattering effects of point sources can not be ignored, even they are very weak if the source is far away from the obstacle (see Theorem \ref{weakinteraction}).
Inspired by the recent works \cite{ZG,XZZ18}, we introduce an artificial rigid body $D\subset\R^2\ba\ov{\Om\cup{\{\mathbf{z}\}}}$ into the scattering system.
The reference ball technique dates back to \cite{LLZ}, where such a technique is used to avoid eigenvalues and choose a cut-off value for the linear sampling method with phased far field patterns. We also refer to \cite{KirschLiuIP14avoid} and \cite{QinLiu} for application of the reference ball techniques in phased inverse scattering problems.
We guess that such a body $D$ can be removed. However, this requires other techniques. We hope to report this elsewhere in the future.

To simplify notations, for fixed $\mathbf{z}$ and $\mathbf{q}$, we set
\ben
v^{\infty}_{D\cup\Om,p}(\mathbf{\hx}):=v^{\infty}_{D\cup\Om,p}(\mathbf{\hx,z,q},1)+ e^{-ik_p\mathbf{\hx\cdot z}}(\mathbf{q\cdot\hx}),\quad \mathbf{\hx}\in \mathbb{S}_{\bf q},\\
v^{\infty}_{D\cup\Om,s}(\mathbf{\hx}):=v^{\infty}_{D\cup\Om,s}(\mathbf{\hx,z,q},1)+ e^{-ik_s\mathbf{\hx\cdot z}}(\mathbf{q\cdot\hx^{\perp}}),\quad \mathbf{\hx}\in \mathbb{S}^{\perp}_{\bf q}.
\enn

\begin{theorem}\label{uni-body}
For any fixed frequency $\om>0$, source polarization $\bf q\in\mathcal {Q}$ and source point $\mathbf{z}\in \R^2\ba\ov{D\cup\Om}$, the scatterer $\Om$ is uniquely determined by the following phaseless data
\be
&&\label{wmp1}\Big|w^{\infty}_{D\cup\Om\cup\{\mathbf{z}\},mp}(\mathbf{\hx,d,q},\tau)\Big|, \quad \mathbf{\hx}\in \mathbb{S}_{\bf q},  \,\mathbf{d}\in\mathbb{S},\,\tau\in\mathcal {T}, \,m=p,s,\\
&&\label{wsp1}\Big|w^{\infty}_{D\cup\Om\cup\{\mathbf{z}\},ms}(\mathbf{\hx,d,q},\tau)\Big|, \quad \mathbf{\hx}\in \mathbb{S}^{\perp}_{\bf q}, \,\mathbf{d}\in\mathbb{S},\,\tau\in\mathcal {T},\,m=p,s,\\
&&\label{vp1}\Big|v^{\infty}_{D\cup\Om,p}(\mathbf{\hx})\Big|, \quad \mathbf{\hx}\in \mathbb{S}_{\bf q}, \\
&&\label{vs1}\Big|v^{\infty}_{D\cup\Om,s}(\mathbf{\hx})\Big|, \quad \mathbf{\hx}\in \mathbb{S}^{\perp}_{\bf q}.
\en
\end{theorem}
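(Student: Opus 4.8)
Write the argument in the spirit of Theorems~\ref{uni-phaseless123} and \ref{uni-phaseless012}: suppose two rigid bodies $\Om_1$ and $\Om_2$, each disjoint from the fixed reference body $D$ and from the source point $\mathbf{z}$, produce identical phaseless data \eqref{wmp1}--\eqref{vs1}, and deduce $\Om_1=\Om_2$. The first step is to record the affine dependence on the scattering strength. Since the point-source scattering is linear in $\tau$, one has $v^{\infty}_{D\cup\Om,p}(\hat{\mathbf{x}},\mathbf{z},\mathbf{q},\tau)=\tau\,v^{\infty}_{D\cup\Om,p}(\hat{\mathbf{x}},\mathbf{z},\mathbf{q},1)$, so by \eqref{wmpdef} and the definition of $v^{\infty}_{D\cup\Om,p}(\hat{\mathbf{x}})$ preceding the theorem,
\[
w^{\infty}_{D\cup\Om\cup\{\mathbf{z}\},mp}(\hat{\mathbf{x}},\mathbf{d},\mathbf{q},\tau)=u^{\infty}_{D\cup\Om,mp}(\hat{\mathbf{x}},\mathbf{d})+\tau\,v^{\infty}_{D\cup\Om,p}(\hat{\mathbf{x}}),
\]
and analogously for the shear component on $\mathbb{S}^{\perp}_{\mathbf q}$. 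Writing $A_i:=u^{\infty}_{D\cup\Om_i,mp}(\hat{\mathbf{x}},\mathbf{d})$ and $B_i:=v^{\infty}_{D\cup\Om_i,p}(\hat{\mathbf{x}})$, the data \eqref{wmp1} and \eqref{vp1} give $|A_1+\tau_j B_1|=|A_2+\tau_j B_2|$ for all $\tau_j\in\mathcal{T}$ together with $|B_1|=|B_2|$. On $\mathbb{S}_{\mathbf q}$ the dominant term $e^{-ik_p\hat{\mathbf{x}}\cdot\mathbf{z}}(\mathbf{q}\cdot\hat{\mathbf{x}})$ of $B_i$ has modulus $\ge1/2$, so outside a set of measure zero (recovered later by analyticity) $B_i\neq0$ and we may divide by $|B_i|$. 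As $\tau_2-\tau_1$ and $\tau_3-\tau_1$ are linearly independent, the points $-\tau_1,-\tau_2,-\tau_3$ are non-collinear, and Lemma~\ref{phaseretrieval} applied to the distances $|A_i/B_i+\tau_j|$ yields $A_1/B_1=A_2/B_2$. Together with $|B_1|=|B_2|$ this produces a real phase $\gamma(\hat{\mathbf{x}})$, independent of $\mathbf{d}$ and of the incident type $m$, with
\[
u^{\infty}_{D\cup\Om_1,mp}(\hat{\mathbf{x}},\mathbf{d})=e^{i\gamma(\hat{\mathbf{x}})}u^{\infty}_{D\cup\Om_2,mp}(\hat{\mathbf{x}},\mathbf{d}),\qquad v^{\infty}_{D\cup\Om_1,p}(\hat{\mathbf{x}})=e^{i\gamma(\hat{\mathbf{x}})}v^{\infty}_{D\cup\Om_2,p}(\hat{\mathbf{x}}).
\]

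The second step removes the dependence of the phase on $\hat{\mathbf{x}}$. Taking $m=n=p$ and applying the reciprocity relation \eqref{ReciprocityRelations} to both $u^{\infty}_{D\cup\Om_1,pp}$ and $u^{\infty}_{D\cup\Om_2,pp}$ turns the identity above into $u^{\infty}_{D\cup\Om_1,pp}(-\mathbf{d},-\hat{\mathbf{x}})=e^{i\gamma(\hat{\mathbf{x}})}u^{\infty}_{D\cup\Om_2,pp}(-\mathbf{d},-\hat{\mathbf{x}})$; comparing this with the same identity read off at observation direction $-\mathbf{d}\in\mathbb{S}_{\mathbf q}$ forces $e^{i\gamma(\hat{\mathbf{x}})}=e^{i\gamma(-\mathbf{d})}$ wherever the pattern is nonzero. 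Since $e^{i\gamma}$ is a quotient of analytic far field patterns, $\gamma$ is real-analytic, and letting $\hat{\mathbf{x}}$ and $-\mathbf{d}$ vary independently over $\mathbb{S}_{\mathbf q}$ shows $\gamma\equiv\gamma_0$ is constant. Analytic continuation in $\hat{\mathbf{x}}$ then extends the relation from the arc $\mathbb{S}_{\mathbf q}$ to all of $\mathbb{S}$, so $u^{\infty}_{D\cup\Om_1,mn}(\hat{\mathbf{x}},\mathbf{d})=e^{i\gamma_0}u^{\infty}_{D\cup\Om_2,mn}(\hat{\mathbf{x}},\mathbf{d})$ for all $\hat{\mathbf{x}},\mathbf{d}\in\mathbb{S}$, $m,n=p,s$; the shear data \eqref{wsp1},\eqref{vs1} are handled identically on $\mathbb{S}^{\perp}_{\mathbf q}$.

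The crux of the proof, and the step I expect to be the main obstacle, is to show $\gamma_0=0$; here the point-source data \eqref{vp1} together with the \emph{known} reference body $D$ is decisive. From the matched phased patterns and Rellich's lemma, the plane-wave scattered fields satisfy $\mathbf{u}^{sc}_{D\cup\Om_1,m}=e^{i\gamma_0}\mathbf{u}^{sc}_{D\cup\Om_2,m}$ throughout the common exterior, in particular at the fixed point $\mathbf{z}$. By the elastic mixed reciprocity relation (equivalently, by representing the point source as a superposition of plane waves), the scattered part $v^{\infty}_{D\cup\Om_i,p}(\hat{\mathbf{x}},\mathbf{z},\mathbf{q},1)$ of the point-source far field equals a fixed constant times $\mathbf{q}\cdot\mathbf{u}^{sc}_{D\cup\Om_i,p}(\mathbf{z},-\hat{\mathbf{x}})$, whence $v^{\infty}_{D\cup\Om_1,p}(\hat{\mathbf{x}},\mathbf{z},\mathbf{q},1)=e^{i\gamma_0}v^{\infty}_{D\cup\Om_2,p}(\hat{\mathbf{x}},\mathbf{z},\mathbf{q},1)$. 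Subtracting this from the relation $v^{\infty}_{D\cup\Om_1,p}(\hat{\mathbf{x}})=e^{i\gamma_0}v^{\infty}_{D\cup\Om_2,p}(\hat{\mathbf{x}})$ obtained above, and recalling $v^{\infty}_{D\cup\Om_i,p}(\hat{\mathbf{x}})=v^{\infty}_{D\cup\Om_i,p}(\hat{\mathbf{x}},\mathbf{z},\mathbf{q},1)+e^{-ik_p\hat{\mathbf{x}}\cdot\mathbf{z}}(\mathbf{q}\cdot\hat{\mathbf{x}})$, the obstacle-dependent parts cancel and leave
\[
e^{-ik_p\hat{\mathbf{x}}\cdot\mathbf{z}}(\mathbf{q}\cdot\hat{\mathbf{x}})=e^{i\gamma_0}e^{-ik_p\hat{\mathbf{x}}\cdot\mathbf{z}}(\mathbf{q}\cdot\hat{\mathbf{x}}).
\]
Because $\mathbf{q}\cdot\hat{\mathbf{x}}\ge1/2$ on $\mathbb{S}_{\mathbf q}$, this obstacle-independent incident term serves as a phase reference and forces $e^{i\gamma_0}=1$, i.e. $\gamma_0=0$. (The difficulty is exactly that nothing in the data pins $\gamma_0$ without exploiting the known incident field and the reference body; the modulus of $v^{\infty}_{D\cup\Om,p}$ alone cannot.)

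Finally, $\gamma_0=0$ gives $u^{\infty}_{D\cup\Om_1,mn}(\hat{\mathbf{x}},\mathbf{d})=u^{\infty}_{D\cup\Om_2,mn}(\hat{\mathbf{x}},\mathbf{d})$ with full phase for all $\hat{\mathbf{x}},\mathbf{d}\in\mathbb{S}$ and $m,n=p,s$. The classical uniqueness theorem for the phased inverse elastic obstacle problem \cite{HuKirschSini,HahnerHsiao} then yields $D\cup\Om_1=D\cup\Om_2$, and since $D$ is known and disjoint from the obstacles we conclude $\Om_1=\Om_2$, as claimed.
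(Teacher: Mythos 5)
Your first two steps coincide with the paper's own proof: divide by the auxiliary pattern $v^{\infty}_{D\cup\Om,p}(\hat{\mathbf{x}})$, apply Lemma~\ref{phaseretrieval} with the three non-collinear points $-\tau_1,-\tau_2,-\tau_3$, and then use the reciprocity relation \eqref{ReciprocityRelations} to show that the unimodular factor $e^{i\gamma(\hat{\mathbf{x}})}$ relating the two data sets is a single constant $e^{i\gamma_0}$ for all components $m,n$. Where you genuinely depart from the paper is in forcing $e^{i\gamma_0}=1$. The paper does this through the reference body: by Rellich's lemma the plane-wave scattered fields of $\Om_1$ and $\Om_2$ agree up to $e^{i\gamma}$ in the common exterior, and evaluating at a point $\mathbf{y}^\ast$ lying either on $\pa D$ or on $\pa\Om_1\cap\pa\Om_2$, where the Dirichlet condition gives $\mathbf{u}^{sc}=-\mathbf{u}^{in}$, yields $\mathbf{u}^{in}_{m}(\mathbf{y}^\ast,\mathbf{-\hx})[e^{i\g}-1]=0$ and hence $e^{i\gamma}=1$ (see \eqref{us1us2}--\eqref{33}). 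You instead use the point source itself as the phase reference: Rellich's identity evaluated at $\mathbf{z}$, the elastic mixed reciprocity relation, and subtraction of the known incident term $e^{-ik_p\hat{\mathbf{x}}\cdot\mathbf{z}}(\mathbf{q}\cdot\hat{\mathbf{x}})$ inside $v^{\infty}_{D\cup\Om,p}(\hat{\mathbf{x}})$. This is a coherent and, if anything, stronger route: notice that it never actually invokes $D$ (contrary to your own remark that the reference body is ``decisive''), so made rigorous it would yield uniqueness with purely phaseless data even without the artificial body --- a statement the paper explicitly does not claim, since its remark after the theorem trades the removal of $D$ for the \emph{phased} data \eqref{vp2}--\eqref{vs2}. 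The price is that your argument rests on the elastic mixed reciprocity relation, which is neither stated nor proved in this paper (only the ordinary reciprocity \eqref{ReciprocityRelations} is available); a self-contained proof would have to establish it in two dimensions with the paper's far-field normalization, cf.\ \cite{HuKirschSini}.

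Two weaker points need repair. First, your justification that $v^{\infty}_{D\cup\Om_i,p}(\hat{\mathbf{x}})\neq0$ off a null set --- that the ``dominant term'' $e^{-ik_p\hat{\mathbf{x}}\cdot\mathbf{z}}(\mathbf{q}\cdot\hat{\mathbf{x}})$ has modulus at least $1/2$ --- is not valid as stated: the point-source scattered contribution is only small when $\mathbf{z}$ is far from the obstacles (Theorem~\ref{weakinteraction}), which is not assumed in Theorem~\ref{uni-body}. The correct argument is the one the paper opens its proof with: if the pattern vanished on a set of positive measure, analyticity and Rellich's lemma would force the point-source scattered field to coincide with the compressional part of $-\mathbf{\Phi}(\cdot,\mathbf{z})\mathbf{q}$, which is singular at $\mathbf{z}$, a contradiction. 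Second, applying Rellich's identity ``at the fixed point $\mathbf{z}$'' requires $\mathbf{z}$ to lie in the unbounded component of $\R^2\ba\ov{D\cup\Om_1\cup\Om_2}$; this does not follow from $\mathbf{z}\notin\ov{D\cup\Om_i}$ alone, since $\mathbf{z}$ could sit in a pocket enclosed jointly by $\Om_1$ and $\Om_2$, and the paper's evaluation at boundary points is one way around exactly this configuration issue. Neither point is fatal, but both must be addressed before your variant counts as a complete proof.
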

\begin{proof}
Define two sets
\ben
\mathbb{S}_0:=\Big\{\mathbf{\hx}\in \mathbb{S}_{\bf q}: \, \Big|v^{\infty}_{D\cup\Om,p}(\mathbf{\hx})\Big|=0\Big\}
\quad\mbox{and}\quad
\mathbb{S}^\perp_{0}:=\Big\{\mathbf{\hx}\in \mathbb{S}^{\perp}_{\bf q}: \, \Big|v^{\infty}_{D\cup\Om,s}(\mathbf{\hx})\Big|=0\Big\}.
\enn
We then claim that these two sets $\mathbb{S}_0$ and $\mathbb{S}^\perp_{0}$ have Lebesgue measure zero. For the case when the distance $\rho:=dist (\mathbf{z}, D\cup\Om)$ is large enough,
this is obvious with Theorem \ref{weakinteraction}. For the general case, we show only that $\mathbb{S}_0$ has Lebesgue measure zero, the other one can be proved analogously. On the contrary, assume that $\mathbb{S}_0$ has positive Lebesgue measure. By the analyticity of the far field pattern, we conclude that
$\mathbb{S}_0=\mathbb{S}_{\bf q}$. This implies that $v^{\infty}_{D\cup\Om,p}(\mathbf{\hx,z,q},1)=- e^{-ik_p\mathbf{\hx\cdot z}}(\mathbf{q}\cdot\mathbf{\hx})$ for $\mathbf{\hx}\in \mathbb{S}_{\bf q}$, and also for
all $\mathbf{\hx}\in \mathbb{S}$ by analyticity again.
Note that by \eqref{Phip}, $e^{-ik_p\mathbf{\hx\cdot z}}(\mathbf{q}\cdot\mathbf{\hx})$ is just the compressional far field pattern of the scattered field
$\Phi(\cdot, \mathbf{z})\mathbf{q}$. We conclude by Rellich's Lemma \cite{CK} and analytic continuation that
\be\label{vpPhip}
\mathbf{v}^{sc}_{D\cup\Om,p}(\mathbf{x,z,q},1) = \frac{1}{k^2_p}\ggrad\ddiv [\Phi(\mathbf{x,z})\mathbf{q}], \quad \mathbf{x}\in\R^2\ba\ov{D\cup\Om\cup\{\bf z\}}.
\en
This contradicts the fact that the scattered field $\mathbf{v}^{sc}_{D\cup\Om,p}(\mathbf{\cdot,z,q},1)$ is analytic in $\R^2\ba\ov{D\cup\Om}$ since the right hand side of
\eqref{vpPhip} is singular at $\mathbf{x=z}$.

Recall the representation \eqref{wmpdef}, we have for all $\mathbf{\hx}\in\mathbb{S}_{\bf q}\ba\mathbb{S}_{0},\,\mathbf{d}\in\mathbb{S},\,\tau\in\mathcal{T}$,
\ben
&&\Big|w^{\infty}_{D\cup\Om\cup\{z\},mp}(\mathbf{\hx,d,q},\tau)\Big|\cr
&=&\Big|u^{\infty}_{D\cup\Om,mp}(\mathbf{\hx,d})+\tau v^{\infty}_{D\cup\Om,p}(\mathbf{\hx})\Big|\cr
&=&\Big|v^{\infty}_{D\cup\Om,p}(\mathbf{\hx})\Big| \Big|\frac{u^{\infty}_{D\cup\Om,mp}(\mathbf{\hx,d})}{v^{\infty}_{D\cup\Om,p}(\mathbf{\hx})}+\tau\Big|,\ m=p,s.
\enn
By Lemma \ref{phaseretrieval}, we deduce that
\ben
\frac{u^{\infty}_{D\cup\Om,mp}(\mathbf{\hx,d})}{v^{\infty}_{D\cup\Om,p}(\mathbf{\hx})}, \quad \mathbf{\hx}\in\mathbb{S}_{\bf q}\ba\mathbb{S}_{0}, \,\mathbf{d}\in\mathbb{S},\, m=p,s
\enn
is uniquely determined by the phaseless data given in \eqref{wmp1} and \eqref{vp1}.
Note that $|v^{\infty}_{D\cup\Om,p}(\mathbf{\hx})|$ is given in \eqref{vp1}, we thus further obtain the phaseless data $|u^{\infty}_{D\cup\Om,mp}(\mathbf{\hx,d})$, $m=p,s$ uniquely for all
$\mathbf{\hx, d}\in\mathbb{S}$ by analyticity.
Similarly,
\ben
|u^{\infty}_{D\cup\Om,ms}(\mathbf{\hx,d})|\quad\mbox{and}\quad
\frac{u^{\infty}_{D\cup\Om,ms}(\mathbf{\hx,d})}{v^{\infty}_{D\cup\Om,s}(\mathbf{\hx})}, \quad \mathbf{\hx}\in\mathbb{S}^{\perp}_{\bf q}\ba\mathbb{S}^{\perp}_{0}, \,\mathbf{d}\in\mathbb{S},\, m=p,s
\enn
are uniquely determined by the phaseless data given in \eqref{wsp1} and \eqref{vs1}, then $|u^{\infty}_{D\cup\Om,ms}(\mathbf{\hx,d})|$, $m=p,s$ are obtained for all $\mathbf{\hx, d}\in\mathbb{S}$ uniquely.
Writing
\ben
&&u^{\infty}_{D\cup\Om,mn}(\mathbf{\hx,d}):=|u^{\infty}_{D\cup\Om,mn}(\mathbf{\hx,d})|e^{i\alpha_{mn}(\mathbf{\hx,d})},\quad \mathbf{\hx,d}\in\mathbb{S},\,m,n=p,s,\cr
&&v^{\infty}_{D\cup\Om,n}(\mathbf{\hx})=|v^{\infty}_{D\cup\Om,n}(\mathbf{\hx})|e^{i\beta_{n}(\mathbf{\hx})}, \quad \mathbf{\hx,d}\in\mathbb{S},\,n=p,s,
\enn
where the phase functions $\alpha_{mn}\in [0,2\pi)$ and $\beta_{n}\in [0,2\pi)$ are analytic on $\mathbb{S}$, and from the  previous analysis, we obtain that $e^{i[\alpha_{mn}(\mathbf{\hx,d})-\beta_{n}(\mathbf{\hx})]}$ is uniquely determined for all $\mathbf{\hx}\in \mathbb{S}_{\bf q}\ba\mathbb{S}_{0}$ or $\mathbf{\hx}\in \mathbb{S}^{\perp}_{\bf q}\ba\mathbb{S}^{\perp}_{0}$, $\mathbf{d}\in\mathbb{S}$, and thus for all $\mathbf{\hx,d}\in\mathbb{S}$ by analyticity.
We then claim that $\alpha_{mn}$ and $\beta_{n}$ are uniquely determined for all $m,n=p,s$.
To prove this, we assume there are two sets of phase functions $\alpha^{(i)}_{mn}$ and $\beta^{(i)}_{n}$, $i=1,2$ and $m,n=p,s$. Denote by $\Om_1$ and $\Om_2$ the corresponding rigid bodies. Then we obtain
\ben
e^{i[\alpha^{(1)}_{mn}(\mathbf{\hx,d})-\beta^{(1)}_{n}(\mathbf{\hx})]}=e^{i[\alpha^{(2)}_{mn}(\mathbf{\hx,d})-\beta^{(2)}_{n}(\mathbf{\hx})]}, \quad \mathbf{\hx, d}\in\mathbb{S},\, m,n=p,s,
\enn
that is,
\be\label{alphabeta}
\alpha^{(1)}_{mn}(\mathbf{\hx,d})-\beta^{(1)}_{n}(\mathbf{\hx})=\alpha^{(2)}_{mn}(\mathbf{\hx,d})-\beta^{(2)}_{n}(\mathbf{\hx}) +2l_{mn}\pi,\quad\mathbf{\hx,d}\in\mathbb{S},\, m,n=p,s\quad
\en
for some $l_{mn}\in\{0,\pm 1\}$.
Define
\be\label{gmndef}
\g_{mn}(\mathbf{\hx}):=\beta^{(1)}_{n}(\mathbf{\hx})-\beta^{(2)}_{n}(\mathbf{\hx}) +2l_{mn}\pi,\quad\mathbf{\hx}\in\mathbb{S},\, m,n=p,s.
\en
Then by \eqref{alphabeta} we have
\ben
\g_{mn}(\mathbf{\hx})= \alpha^{(1)}_{mn}(\mathbf{\hx,d})-\alpha^{(2)}_{mn}(\mathbf{\hx,d}),\quad\mathbf{\hx,d}\in\mathbb{S},\, m,n=p,s.
\enn
From this, noting that $|u^{\infty}_{D\cup\Om_1,mn}(\mathbf{\hx,d})|=|u^{\infty}_{D\cup\Om_2,mn}(\mathbf{\hx,d})|$ for $\mathbf{\hx, d}\in\mathbb{S}$, we observe that
\be\label{uinf12g}
u^{\infty}_{D\cup\Om_1,mn}(\mathbf{\hx,d})
&=&|u^{\infty}_{D\cup\Om_1,mn}(\mathbf{\hx,d})|e^{i\alpha^{(1)}_{mn}(\mathbf{\hx,d})}\cr
&=&|u^{\infty}_{D\cup\Om_2,mn}(\mathbf{\hx,d})|e^{i\alpha^{(2)}_{mn}(\mathbf{\hx,d})}e^{i\g_{mn}(\mathbf{\hx})}\cr
&=&u^{\infty}_{D\cup\Om_2,mn}(\mathbf{\hx,d})e^{i\g_{mn}(\mathbf{\hx})},\quad \mathbf{\hx, d}\in\mathbb{S},\,m,n=p,s.
\en
Interchanging the roles of $\mathbf{\hx}$ and $-\mathbf{d}$ gives
\be\label{uinf12g2}
u^{\infty}_{D\cup\Om_1,mn}(\mathbf{-d, -\hx})
=u^{\infty}_{D\cup\Om_2,mn}(\mathbf{-d,-\hx})e^{i\g_{mn}(-\mathbf{d})},\quad \mathbf{\hx, d}\in\mathbb{S},\,m,n=p,s.\quad
\en
Using the reciprocity relation \eqref{ReciprocityRelations}, from \eqref{uinf12g}, we see that
\be\label{uinf12g3}
u^{\infty}_{D\cup\Om_1,nm}(\mathbf{-d,-\hx})=u^{\infty}_{D\cup\Om_2,nm}(\mathbf{-d,-\hx})e^{i\g_{mn}(\mathbf{\hx})},\quad \mathbf{\hx, d}\in\mathbb{S},\,m,n=p,s.
\en
Comparing \eqref{uinf12g2} and \eqref{uinf12g3}, we deduce that
\ben
\g_{nm}(-\mathbf{d})=\g_{mn}(\mathbf{\hx}), \quad \mathbf{\hx, d}\in\mathbb{S},\,m,n=p,s.
\enn
Thus, $\g_{mn}=\g_{nm}$ are constants independent of the directions $\mathbf{\hx, d}\in\mathbb{S}$.
Recall the definition \eqref{gmndef} of $\g_{mn}$, we find that $\g_{pm}-\g_{sm}=2(l_{pm}-l_{sm})\pi$ for two constants $l_{pm}, l_{sm}\in \{0,\pm 1\}$, $m=p,s$.
Thus $e^{i\g_{mn}}=e^{i\g}, m,n=p,s$ for some fixed constant $\g$.

Let $G$ be the unbounded connected component of the complement of $\Om_1\cup\Om_2$.
We apply Rellich Lemma to conclude that
\be\label{us1us2}
\mathbf{u}^{sc}_{D\cup\Om_1,nm}(\mathbf{y},\mathbf{-\hx})=\mathbf{u}^{sc}_{D\cup\Om_2,nm}(\mathbf{y},\mathbf{-\hx})e^{i\g},\quad
\mathbf{y}\in G\ba\ov{D}, \,\mathbf{\hx}\in\mathbb{S},\, m,n=p,s.\qquad
\en
Since $D\cap\Om_i=\emptyset,\,i=1,2$, we have
\ben
D\subset G \quad\mbox{or}\quad D\subset\R^2\ba\ov{G}.
\enn
If $D\subset\R^2\ba\ov{G}$, there exists a point $\mathbf{y}_0\in\pa G$ such that $\mathbf{y}_0\in\pa\Om_1\cap\pa\Om_2$. For the case of $D\subset G$, we choose any point $\mathbf{y}_1\in\pa D$.
Then we define
\ben
\mathbf{y^\ast}:=\left\{
                   \begin{array}{ll}
                     \mathbf{y}_0, & \hbox{$D\subset\R^2\ba\ov{G}$;} \\
                     \mathbf{y}_1, & \hbox{$D\subset G$.}
                   \end{array}
                 \right.
\enn
Using the Dirichlet boundary conditions, we have
\be\label{u12=0}
\sum_{n\in\{p,s\}}\mathbf{u}^{sc}_{D\cup\Om_1,mn}(\mathbf{y}^\ast,\mathbf{-\hx})
&=&-\mathbf{u}^{in}_{m}(\mathbf{y^\ast,-\hx})\cr
&=&\sum_{n\in\{p,s\}}\mathbf{u}^{sc}_{D\cup\Om_2,mn}(\mathbf{y}^\ast,\mathbf{-\hx}),\quad\mathbf{\hx}\in\mathbb{S},\,m=p,s.
\en
Combining this with \eqref{us1us2} we find
\be\label{33}
\mathbf{u}^{in}_{m}(\mathbf{y}^\ast,\mathbf{-\hx})[e^{i\g}-1]=0,\quad\mathbf{\hx}\in\mathbb{S},\,m=p,s.
\en
From this and $|\mathbf{u}^{in}_{m}(\mathbf{y^\ast,-\hx})|=1$, it can be deduced that
\ben
e^{i\g}= 1.
\enn
Combining this with \eqref{uinf12g}, we find that
\ben
u^{\infty}_{D\cup\Om_1,mn}(\mathbf{\hx,d}) = u^{\infty}_{D\cup\Om_2,mn}(\mathbf{\hx,d}),\quad \mathbf{\hx, d}\in\mathbb{S},\,m,n=p,s.
\enn
The proof is completed by applying the classical uniqueness result with phased far field pattern \cite{HahnerHsiao}.
%
\end{proof}

\begin{remark}
For the case of $D\subset\R^2\ba\ov{G}$, we have used the Dirichlet boundary conditions on $\pa\Om_i$ in the proof. Such a case can be avoided if we know a priori that the unknown scatterers $\Om_i, \,i=1,2$ are located in some big ball $B_R$ with radius $R$ centered at the origin, and we choose $D$ outside of $B_R$.

Essentially the same arguments as in the proof of Theorem \ref{uni-body} show that the scatterer can be uniquely determined by the phaseless far field patterns with multiple frequencies and one fixed observation direction.

Using Lemma \ref{phaseretrieval} and following the first part of the previous proof of Theorem \ref{uni-body}, for any fixed polarization $\bf q\in\mathcal {Q}$ and source point $\mathbf{z}\in \R^2\ba\ov{\Om}$, we can show that the scatterer can be uniquely determined by the following data
\be
&&\label{wmp2}\Big|w^{\infty}_{\Om\cup\{\mathbf{z}\},mp}(\mathbf{\hx,d,q},\tau)\Big|, \quad \mathbf{\hx}\in \mathbb{S}_{\bf q},  \,\mathbf{d}\in\mathbb{S},\,\tau\in\mathcal {T}, \,m=p,s,\\
&&\label{wsp2}\Big|w^{\infty}_{\Om\cup\{\mathbf{z}\},ms}(\mathbf{\hx,d,q},\tau)\Big|, \quad \mathbf{\hx}\in \mathbb{S}^{\perp}_{\bf q}, \,\mathbf{d}\in\mathbb{S},\,\tau\in\mathcal {T},\,m=p,s,\\
&&\label{vp2}v^{\infty}_{\Om,p}(\mathbf{\hx,z,q},1), \quad \mathbf{\hx}\in \mathbb{S}_{\bf q}, \\
&&\label{vs2}v^{\infty}_{\Om,s}(\mathbf{\hx,z,q},1), \quad \mathbf{\hx}\in \mathbb{S}^{\perp}_{\bf q}.
\en
Here, we remove the artificial rigid body $D$. However, there is a price to pay, that is, we need phased data \eqref{vp2}-\eqref{vs2}.
\end{remark}

Under more assumptions on the artificial rigid body $D$, the following Theorem gives a uniqueness result with less data.

\begin{theorem}\label{uni-body2}
Assume that $\Om\subset B_R$ for some disk $B_R$ with radius $R$ centered at the origin. We choose a convex rigid body $D\subset\R^2\ba\ov{B_R}$ such that $\om^2$ is not a Dirichlet eigenvalue of $-\Delta^{\ast}$ in $D$.
Define $\mathcal{T}_0:=\{0,\tau_1\}$, where $\tau_1\in\R\ba\{0\}$.
For the fixed frequency $\om>0$, source polarization $\bf q\in\mathcal {Q}$ and source point $\mathbf{z}\in \R^2\ba\ov{\Om}$, the scatterer $\Om$ is uniquely determined by the following phaseless data
\be
&&\label{wmp3}\Big|w^{\infty}_{D\cup\Om\cup\{\mathbf{z}\},mp}(\mathbf{\hx,d,q},\tau)\Big|, \quad \mathbf{\hx}\in \mathbb{S}_{\bf q},  \,\mathbf{d}\in\mathbb{S},\,\tau\in\mathcal {T}_0, \,m=p,s,\\
&&\label{wsp3}\Big|w^{\infty}_{D\cup\Om\cup\{\mathbf{z}\},ms}(\mathbf{\hx,d,q},\tau)\Big|, \quad \mathbf{\hx}\in \mathbb{S}^{\perp}_{\bf q}, \,\mathbf{d}\in\mathbb{S},\,\tau\in\mathcal {T}_0,\,m=p,s,\\
&&\label{vp3}\Big|v^{\infty}_{D\cup\Om,p}(\mathbf{\hx})\Big|, \quad \mathbf{\hx}\in \mathbb{S}_{\bf q}, \\
&&\label{vs3}\Big|v^{\infty}_{D\cup\Om,s}(\mathbf{\hx})\Big|, \quad \mathbf{\hx}\in \mathbb{S}^{\perp}_{\bf q}.
\en
\end{theorem}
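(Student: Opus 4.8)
The plan is to argue by contradiction: suppose two admissible scatterers $\Om_1$ and $\Om_2$ generate the same phaseless data \eqref{wmp3}--\eqref{vs3}, and reduce the two--strength set $\mathcal T_0=\{0,\tau_1\}$ to the framework of Theorem~\ref{uni-body}. First I would exploit the strength $\tau=0$, for which the point source drops out and $w^\infty_{D\cup\Om\cup\{\mathbf z\},mn}=u^\infty_{D\cup\Om,mn}$, to read off the modulus $|u^\infty_{D\cup\Om,mn}(\hat{\mathbf x},\mathbf d)|$ on the arcs and hence, by analyticity, on all of $\mathbb S$. Squaring the datum with the \emph{real} strength $\tau=\tau_1$ and subtracting $|u^\infty_{D\cup\Om,mn}|^2$ and $\tau_1^2|v^\infty_{D\cup\Om,n}|^2$ (the last factor being furnished by \eqref{vp3}--\eqref{vs3}) then isolates $\Re\!\big(u^\infty_{D\cup\Om,mn}\,\ov{v^\infty_{D\cup\Om,n}}\big)$, exactly as in the proof of Theorem~\ref{uni-phaseless01}. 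As in the first part of Theorem~\ref{uni-body}, the zero set of $v^\infty_{D\cup\Om,n}$ has Lebesgue measure zero, so, writing $u^\infty_{D\cup\Om_i,mn}=|u^\infty_{mn}|e^{i\alpha^{(i)}_{mn}}$ and $v^\infty_{D\cup\Om_i,n}=|v^\infty_n|e^{i\beta^{(i)}_n}$, the data force $|u^\infty_{D\cup\Om_1,mn}|=|u^\infty_{D\cup\Om_2,mn}|$, $|v^\infty_{D\cup\Om_1,n}|=|v^\infty_{D\cup\Om_2,n}|$ and the equality of cosines $\cos(\alpha^{(1)}_{mn}-\beta^{(1)}_n)=\cos(\alpha^{(2)}_{mn}-\beta^{(2)}_n)$.

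The essential new feature, compared with the three--strength argument where Lemma~\ref{phaseretrieval} recovers the complex quotient $u^\infty/v^\infty$ outright, is that two strengths leave a \emph{sign ambiguity}. Following the analyticity mechanism of Theorem~\ref{uni-phaseless01}, this cosine identity globalises to one of two alternatives valid on all of $\mathbb S$: the $+$ case $\alpha^{(1)}_{mn}-\beta^{(1)}_n\equiv\alpha^{(2)}_{mn}-\beta^{(2)}_n$, or the $-$ case $\alpha^{(1)}_{mn}-\beta^{(1)}_n\equiv-(\alpha^{(2)}_{mn}-\beta^{(2)}_n)$ (mod $2\pi$). In the $+$ case the proof of Theorem~\ref{uni-body} carries over unchanged: the discrepancy $\alpha^{(1)}_{mn}-\alpha^{(2)}_{mn}$ depends on $\hat{\mathbf x}$ alone, the reciprocity relation \eqref{ReciprocityRelations} forces it to be a single constant $\g$ independent of $\hat{\mathbf x},\mathbf d$ and of $(m,n)$, and then Rellich's lemma together with the Dirichlet condition on $\partial D$---which now carries the known incident data, since $\Om_i\subset B_R$ while $D\subset\R^2\ba\ov{B_R}$ places $D$ in the unbounded component---and $|\mathbf u^{in}_m|=1$ yield $e^{i\g}=1$; the classical uniqueness with phased data then gives $\Om_1=\Om_2$.

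The crux is therefore to exclude the $-$ case, and this is exactly where the hypotheses that $D$ be convex and that $\om^2$ \emph{not} be a Dirichlet eigenvalue of $-\Delta^{\ast}$ in $D$ are needed. The plan is to first feed the $-$ relations back through reciprocity \eqref{ReciprocityRelations}: since $\alpha^{(1)}_{mn}+\alpha^{(2)}_{mn}$ then depends on $\hat{\mathbf x}$ alone and equals $\beta^{(1)}_n+\beta^{(2)}_n$, interchanging $\hat{\mathbf x}$ and $-\mathbf d$ forces the common value to be a single constant $\sigma$, yielding the conjugation relations $u^\infty_{D\cup\Om_2,mn}=e^{i\sigma}\,\ov{u^\infty_{D\cup\Om_1,mn}}$ and, for the reference fields, $v^\infty_{D\cup\Om_2,n}=e^{i\sigma}\,\ov{v^\infty_{D\cup\Om_1,n}}$. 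Let $\mathbf W_i$ denote the total reference field, i.e.\ the incident point source $\mathbf{\Phi}(\cdot,\mathbf z)\mathbf q$ together with its scattered field by $D\cup\Om_i$, so that $\mathbf W_i=0$ on $\partial(D\cup\Om_i)$ and $\mathbf W_i$ is radiating with far field $v^\infty_{D\cup\Om_i,n}$. The key point is that the conjugation relation makes the outgoing far--field coefficients of $\mathbf W_2$ coincide with the incoming coefficients of $e^{i\sigma}\ov{\mathbf W_1}$, so the combination $\mathbf V:=\mathbf W_2+e^{i\sigma}\ov{\mathbf W_1}$ has matching incoming and outgoing patterns and hence, in the far zone, coincides with an entire Herglotz solution of the Navier equation; convexity of $D$ guarantees that the unbounded component of $\R^2\ba\ov{D\cup\Om_1\cup\Om_2}$ is connected and reaches both $\partial D$ and $\mathbf z$, so analytic continuation extends this identity throughout that region.

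With this entire extension in hand, the contradiction is reached as follows. On $\partial D$ one has $\mathbf V=0$, so $\mathbf V$ is an entire solution of the Navier equation whose Dirichlet trace on $\partial D$ vanishes; because $\om^2$ is not a Dirichlet eigenvalue of $-\Delta^{\ast}$ in $D$, this forces $\mathbf V\equiv0$ in $D$ and therefore, by unique continuation, on the whole exterior region. But then $\mathbf W_2=-e^{i\sigma}\ov{\mathbf W_1}$, which exhibits the radiating field $\mathbf W_2$ simultaneously as an incoming field; its far field must then vanish, so Rellich's lemma gives $\mathbf W_2\equiv0$ in the exterior, contradicting the genuine singularity of $\mathbf{\Phi}(\cdot,\mathbf z)\mathbf q$ carried by $\mathbf W_2$ at $\mathbf z$. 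This rules out the $-$ case and leaves only the $+$ case, whence $\Om_1=\Om_2$. I expect the delicate step to be the passage ``matched incoming/outgoing patterns $\Rightarrow$ entire Herglotz solution'', which must be carried out simultaneously for the compressional and shear parts radiating at the two distinct wave numbers $k_p$ and $k_s$, and for which the decomposition and asymptotics \eqref{usasymptotic} are the natural tools.
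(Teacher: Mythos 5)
Your reduction of the data coincides with the paper's proof up to and including the $\pm$ dichotomy and the $+$ case: the $\tau=0$ datum gives the moduli, squaring the $\tau_1$ datum isolates $\Re\big(u^{\infty}_{D\cup\Om,mn}\,\ov{v^{\infty}_{D\cup\Om,n}}\big)$, and the $+$ case is settled exactly as in Theorem \ref{uni-body}. The genuine gap is the step you yourself flagged as delicate: the claim that the same-direction conjugation relation $v^{\infty}_{D\cup\Om_2,n}(\mathbf{\hx})=e^{i\sigma}\ov{v^{\infty}_{D\cup\Om_1,n}(\mathbf{\hx})}$ forces $\mathbf{V}:=\mathbf{W}_2+e^{i\sigma}\ov{\mathbf{W}_1}$ to extend to an entire solution. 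It does not. Writing each wave type of a field in the exterior of a large disk as $\frac{e^{ikr}}{\sqrt r}\mathcal{A}(\mathbf{\hx})+\frac{e^{-ikr}}{\sqrt r}\mathcal{B}(\mathbf{\hx})$, the Fourier--Bessel expansion (using $J_n=\tfrac12\big(H^{(1)}_n+H^{(2)}_n\big)$) shows that the field extends to an entire solution if and only if $\mathcal{B}(\mathbf{\hx})=i\,\mathcal{A}(-\mathbf{\hx})$: the incoming amplitude at $\mathbf{\hx}$ must match, with a fixed unimodular factor, the outgoing amplitude at the \emph{antipodal} direction $-\mathbf{\hx}$. For your $\mathbf{V}$ one has $\mathcal{A}(\mathbf{\hx})=\kappa\, v^{\infty}_{D\cup\Om_2,n}(\mathbf{\hx})$ and $\mathcal{B}(\mathbf{\hx})=\ov{\kappa}\,e^{i\sigma}\ov{v^{\infty}_{D\cup\Om_1,n}(\mathbf{\hx})}$ with $\kappa$ the normalization constant in \eqref{usasymptotic}, so the data only tie $\mathcal{B}(\mathbf{\hx})$ to $\mathcal{A}(\mathbf{\hx})$ at the \emph{same} direction; combining this with the antipodal condition would additionally require $v^{\infty}_{D\cup\Om_2,n}(\mathbf{\hx})=-v^{\infty}_{D\cup\Om_2,n}(-\mathbf{\hx})$, an oddness property of the far field pattern that nothing in the problem provides and that is generically false. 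Hence $\mathbf{V}$ is not entire and your contradiction never gets started. A symptom of the same confusion is your reading of the convexity hypothesis: connectivity of the unbounded component is automatic and is not what convexity is needed for.

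The paper excludes the conjugation case by the mechanism you would actually need here: conjugating the boundary representation \eqref{upinfty2} of the far field and substituting $\mathbf{y}\mapsto-\mathbf{y}$ (see \eqref{u1qinf=ovu2inf}) shows that $e^{i\eta}\,\ov{u^{\infty}_{D\cup\Om_2,mn}(\mathbf{\hx},\mathbf{d})}$ is itself the far field pattern of an \emph{outgoing} field $\mathbf{g}^{sc}_{mn}$, built from layer potentials on $\pa\wi{D}\cup\pa B_R$ with $\wi{D}=\{-\mathbf{x}:\,\mathbf{x}\in D\}$; in other words, conjugating a far field pattern reflects the singularity support through the origin. This is exactly where convexity enters: $D$ convex and $D\subset\R^2\ba\ov{B_R}$ give $D\cap\wi{D}=\emptyset$, since a segment joining $\mathbf{x}\in D$ to $-\mathbf{x}\in D$ would pass through the origin, which lies in $B_R$. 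Consequently $\mathbf{g}^{sc}_{mn}$ is analytic in a neighbourhood of $\ov{D}$, Rellich's lemma lets the plane-wave scattered field $\mathbf{u}^{sc}_{D\cup\Om_1,mn}$ extend analytically into $D$, the total plane-wave field then solves the homogeneous Dirichlet problem for $\Delta^\ast+\om^2$ in $D$, and the non-eigenvalue hypothesis forces it to vanish there, hence in $\R^2\ba\ov{B_R}$ by analyticity --- contradicting $|\mathbf{u}^{in}_m|=1$ and the decay of the scattered field. Note that the paper's contradiction is run on the plane-wave fields, not on the point-source fields $\mathbf{W}_i$; if you wish to salvage your variant, the reflection identity above, not an incoming/outgoing matching at coincident directions, is the tool required.
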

\begin{proof}
Using \eqref{wmpdef}, we have
\ben
&&\Big|w^{\infty}_{D\cup\Om\cup\{z\},mp}(\mathbf{\hx,d,q},\tau_1)\Big|^2\cr
&=&\Big|u^{\infty}_{D\cup\Om,mp}(\mathbf{\hx,d})+\tau_1 v^{\infty}_{D\cup\Om,p}(\mathbf{\hx})\Big|^2\cr
&=&\Big|u^{\infty}_{D\cup\Om,mp}(\mathbf{\hx,d})\Big|^2+\tau_1^2\Big|v^{\infty}_{D\cup\Om,p}(\mathbf{\hx})\Big|^2\cr
&&+2\tau_1\Re\Big[u^{\infty}_{D\cup\Om,mp}(\mathbf{\hx,d})\ov{v^{\infty}_{D\cup\Om,p}(\mathbf{\hx})}\Big], \quad \mathbf{\hx}\in\mathbb{S}_{\bf q},\,\mathbf{d}\in\mathbb{S},\,m=p,s.
\enn
Thus $\Re\Big[u^{\infty}_{D\cup\Om,mp}(\mathbf{\hx,d})\ov{v^{\infty}_{D\cup\Om,p}(\mathbf{\hx})}\Big],\,\mathbf{\hx}\in\mathbb{S}_{\bf q},\,\mathbf{d}\in\mathbb{S},\,m=p,s$ is uniquely determined by the phaseless data \eqref{wmp3} and \eqref{vp3}.
Similarly, $\Re\Big[u^{\infty}_{D\cup\Om,ms}(\mathbf{\hx,d})\ov{v^{\infty}_{D\cup\Om,s}(\mathbf{\hx})}\Big],\,\mathbf{\hx}\in\mathbb{S}^{\perp}_{\bf q},\,\mathbf{d}\in\mathbb{S},\,m=p,s$ is uniquely determined by the phaseless data \eqref{wsp3} and \eqref{vs3}. Using analyticity, both of them are uniquely determined for $\mathbf{\hx}\in\mathbb{S}$.

Writing
\ben
u^{\infty}_{D\cup\Om,mn}(\mathbf{\hx,d}) &=& |u^{\infty}_{D\cup\Om,mn}(\mathbf{\hx,d})|e^{i\alpha_{mn}(\mathbf{\hx,d})}, \,\mathbf{\hx}\in\mathbb{S},\,\mathbf{d}\in\mathbb{S},\,m,n=p,s\cr
v^{\infty}_{D\cup\Om,n}(\mathbf{\hx}) &=& |v^{\infty}_{D\cup\Om,n}(\mathbf{\hx})|e^{i\beta_{n}(\mathbf{\hx})}, \,\mathbf{\hx}\in\mathbb{S},n=p,s.
\enn
where the phase functions $\alpha_{mn}\in [0,2\pi)$ and $\beta_{n}\in [0,2\pi)$ are analytic on $\mathbb{S}$. Then, by analyticity of $\alpha_{mn}$ and $\beta_n$, the previous analysis shows that
\be\label{cosmn}
\cos[\alpha_{mn}(\mathbf{\hx,d})-\beta_{n}(\mathbf{\hx})], \quad \mathbf{\hx}\in\mathbb{S},\, \mathbf{d}\in\mathbb{S},\,m,n=p,s
\en
are uniquely determined.
We claim that $\alpha_{mn}$ and $\beta_n$ are uniquely determined. Assume on the contrary that there are two sets of $\alpha^{(i)}_{mn}$ and $\beta^{(i)}_n$,
and the corresponding scatterers are $\Om_{i},\,i=1,2$.
Then we have either
\be\label{case11}
\alpha_{mn}^{(1)}(\mathbf{\hx,d})-\beta^{(1)}_{n}(\mathbf{\hx}) = \alpha^{(2)}_{mn}(\mathbf{\hx,d})-\beta^{(2)}_{n}(\mathbf{\hx})+2l_{mn}\pi, \quad \mathbf{\hx}\in\mathbb{S},\, \mathbf{d}\in\mathbb{S}, \qquad
\en
or
\be\label{case22}
\alpha_{mn}^{(1)}(\mathbf{\hx,d})-\beta^{(1)}_{n}(\mathbf{\hx}) = -\alpha^{(2)}_{mn}(\mathbf{\hx,d})+\beta^{(2)}_{n}(\mathbf{\hx})+2l'_{mn}\pi, \quad \mathbf{\hx}\in\mathbb{S},\, \mathbf{d}\in\mathbb{S}, \qquad
\en
for some $l_{mn},l'_{mn}\in\{0,\pm1\},\,m,n=p,s$.
Arguing similarly as in the proof of Theorem \ref{uni-body}, we can obtain from \eqref{case11}-\eqref{case22} that either
\be\label{uinfcase11}
u^{\infty}_{D\cup\Om_1,mn}(\mathbf{\hx, d})=e^{i\xi}u^{\infty}_{D\cup\Om_2,mn}(\mathbf{\hx, d}), \quad \mathbf{\hx}\in\mathbb{S},\, \mathbf{d}\in\mathbb{S}, \,m,n=p,s\qquad
\en
or
\be\label{uinfcase22}
u^{\infty}_{D\cup\Om_1,mn}(\mathbf{\hx, d})=e^{i\eta}\ov{u^{\infty}_{D\cup\Om_2,mn}(\mathbf{\hx, d})}, \quad \mathbf{\hx}\in\mathbb{S},\, \mathbf{d}\in\mathbb{S}, \,m,n=p,s,\qquad
\en
where $\xi$ and $\eta$ are two constants independent of $\mathbf{\hx,d}\in \mathbb{S}$ and $m,n\in \{p,s\}$.

Using further Rellich's Lemma, from \eqref{uinfcase11}, we deduce that
\ben
\mathbf{u}^{sc}_{D\cup\Om_1,mn}(\mathbf{x, d})=e^{i\xi}\mathbf{u}^{sc}_{D\cup\Om_2,mn}(\mathbf{x, d}), \quad \mathbf{x}\in G,\, \mathbf{d}\in\mathbb{S}, \,m,n=p,s,
\enn
where $G$ is the unbounded connected component of the compliment of $D\cup\Om_1\cup\Om_2$. The assumption on the artificial rigid body $D$ implies that $\pa D\subset\pa G$ and
\ben
\sum_{n\in\{p,s\}}\mathbf{u}^{sc}_{D\cup\Om_1,mn}(\mathbf{x, d})&=&-\mathbf{u}^{in}_{m}(\mathbf{x,d})\cr
&=&\sum_{n\in\{p,s\}}\mathbf{u}^{sc}_{D\cup\Om_2,mn}(\mathbf{x, d}), \quad \mathbf{x}\in \pa D,\, \mathbf{d}\in\mathbb{S}, \,m=p,s,
\enn
Combining the previous two equalities implies that
\ben
(e^{i\xi}-1)\mathbf{u}^{in}_{m}(\mathbf{x,d})=0, \quad \mathbf{x}\in \pa D,\, \mathbf{d}\in\mathbb{S}, \,m=p,s.
\enn
Thus $e^{i\xi}=1$ by noting the fact that $|\mathbf{u}^{in}_{m}(\mathbf{x,d})|=1$ and consequently
\ben
u^{\infty}_{D\cup\Om_1,mn}(\mathbf{\hx, d})=u^{\infty}_{D\cup\Om_2,mn}(\mathbf{\hx, d}), \quad \mathbf{\hx,d}\in\mathbb{S},\,m,n=p,s.\qquad
\enn
Then the statement of the theorem follows from the classical uniqueness result with phased far field patterns \cite{HahnerHsiao}.

We finally show that \eqref{uinfcase22} does not hold. Denote by
\ben
\mathbf{u}^{sc}_{D\cup\Om_i,m}\mathbf{(x,d)}:=\sum_{n\in\{p,s\}}\mathbf{u}^{sc}_{D\cup\Om_i,mn}\mathbf{(x,d)}, \quad \mathbf{x}\in\R^2\ba\ov{D\cup\Om_i},\,\mathbf{d}\in\mathbb{S}
\enn
the scattered field due to scattering of plane wave $\mathbf{u}^{in}_{m}$ by the scatterer $D\cup\Om_i, i=1,2$.
From \eqref{uinfcase22}, by \eqref{upinfty2}, we have
\be\label{u1qinf=ovu2inf}
&&u^{\infty}_{D\cup\Om_1,mn}(\mathbf{\hx, d})\cr
&=&e^{i\eta}\ov{u^{\infty}_{D\cup\Om_2,mn}(\mathbf{\hx, d})}\cr
&=&-e^{i\eta}\int_{\pa D\cup\pa B_R}\Big\{\mathbf{u}_{n}^{in}(\mathbf{y,\hat{x}})\cdot\ov{\mathbb{T}_{\mathbf{\nu(y)}}\mathbf{u}^{sc}_{D\cup\Om_2,m}\mathbf{(y,d)}}\cr
&&\qquad -[\mathbb{T}_{\mathbf{\nu(y)}}\mathbf{u}_{n}^{in}(\mathbf{y,\hat{x}})]\cdot\ov{\mathbf{u}^{sc}_{D\cup\Om_2,m}\mathbf{(y,d)}}\Big\}ds(\mathbf{y})\cr
&=&e^{i\eta}\int_{\pa \wi{D}\cup\pa B_R}\Big\{\mathbf{u}_{n}^{in}(\mathbf{y,-\hat{x}})\cdot\ov{\mathbb{T}_{\mathbf{\nu(y)}}\mathbf{u}^{sc}_{D\cup\Om_2,m}\mathbf{(-y,d)}}\cr
&&\qquad -[\mathbb{T}_{\mathbf{\nu(y)}}\mathbf{u}_{n}^{in}(\mathbf{y,-\hat{x}})]\cdot\ov{\mathbf{u}^{sc}_{D\cup\Om_2,m}\mathbf{(-y,d)}}\Big\}ds(\mathbf{y}),
\quad \mathbf{\hx,d}\in\mathbb{S},\,m,n=p,s,\qquad
\en
where $\wi{D}:=\{\mathbf{x}\in\R^2: \,-\mathbf{x}\in D\}$. Since $D\subset\R^2\ba\ov{B_R}$ is convex, we have $\wi{D}\subset\R^2\ba\ov{B_R}$ and $\ov{D\cap\wi{D}}=\emptyset$.
Define
\ben
\mathbb{L}_n:=\left\{
                \begin{array}{ll}
                  -\frac{1}{k^2_p}\ggrad\ddiv, & \hbox{$n=p$;} \\
                  -\frac{1}{k^2_s}\ggrad^{\perp}\ddiv^{\perp}, & \hbox{$n=s$.}
                \end{array}
              \right.
\enn
Note that the right hand side of the above equality \eqref{u1qinf=ovu2inf} is the far field pattern of the scattered field
\ben
&&\mathbf{g}^{sc}_{mn}(\mathbf{x,d})\cr
&:=&e^{i\eta}\mathbb{L}_n\int_{\pa \wi{D}\cup\pa B_R}\Big\{\Phi(\mathbf{x,y})\ov{\mathbb{T}_{\mathbf{\nu(y)}}\mathbf{u}^{sc}_{D\cup\Om_2,m}\mathbf{(-y,d)}}\cr
&& -[\mathbb{T}_{\mathbf{\nu(y)}}\Phi(\mathbf{x,y})]^T\ov{\mathbf{u}^{sc}_{D\cup\Om_2,m}\mathbf{(-y,d)}}\Big\}ds(\mathbf{y}),
\quad \mathbf{x}\in \R^2\ba\ov{\wi{D}\cup B_R}, \,\mathbf{d}\in\mathbb{S},\,m,n=p,s.
\enn
Rellich's Lemma gives that
\ben
\mathbf{u}^{sc}_{D\cup\Om_1,mn}(\mathbf{x, d})=\mathbf{g}^{sc}_{mn}(\mathbf{x,d}),\quad \mathbf{x}\in \R^2\ba\ov{D\cup\wi{D}\cup B_R}, \,\mathbf{d}\in\mathbb{S},\,m,n=p,s.
\enn
By the analyticity of $\mathbf{g}^{sc}_{mn}$ in $\R^2\ba\ov{\wi{D}\cup B_R}$, we find that the scattered field $\mathbf{u}^{sc}_{D\cup\Om_1,mn}$, $m,n=p,s$ can be analytically extended into $D$ and satisfies the Navier equation in $D$. Therefore, we deduce that the total field $\mathbf{u}_{D\cup\Om_1,m}:=\mathbf{u}^{in}_m+\mathbf{u}^{sc}_{D\cup\Om_1,m}$ satisfies
\ben
\Delta^\ast \mathbf{u}_{D\cup\Om_1,m} +\om^2 \mathbf{u}_{D\cup\Om_1,m}=0\,\, \mbox{in}\,\, D\quad\mbox{and}\quad  \mathbf{u}_{D\cup\Om_1,m}=0\,\, \mbox{on}\,\, \pa D,\quad \, m=p,s.
\enn
The assumption that $\om^2$ is not a Dirichlet eigenvalue of $-\Delta^\ast$ in $D$ implies that $\mathbf{u}_{D\cup\Om_1,m}=0$ in $D$. From this and the analyticity of $\mathbf{u}_{D\cup\Om_1,m}$ we finally obtain $\mathbf{u}_{D\cup\Om_1,m}=0$ in $\R^2\ba\ov{B_R}$. However this leads to a contradiction to $|\mathbf{u}^{in}_m(\mathbf{x,d})|=1$
for all $\mathbf{x}\in\R^2$ and $\mathbf{u}^{sc}_{D\cup\Om_1,m}(\mathbf{x,d})\rightarrow 0$ as $|\mathbf{x}|\rightarrow \infty$.
\end{proof}

\section{Phase retrieval and shape reconstruction methods}\label{sec:phaseretrieval+Shapereconstruction}
This section devotes to the numerical schemes for shape reconstruction with phaseless far field data. First, we propose a fast and stable phase retrieval approach using a simple geometric structure which provides a stable reconstruction of a point in the plane from three given distances. Then, several sampling methods for shape reconstruction with phaseless far field data are given. For obstacle scattering problems, the shear far field pattern corresponding to incident plane shear wave is considered, two different direct sampling methods are proposed with data at a fixed frequency. For inverse source scattering problems, the shear far field pattern is considered, we introduce two direct sampling methods for source supports with sparse multi-frequency data. Other cases follow similarly. The phase retrieval techniques are also combined with the classical sampling methods for the shape reconstructions.
\subsection{Phase retrieval}\label{PR}
In this subsection, we introduce a phase retrieval method based on the following geometric result \cite{JiLiuZhang-obstacle}.

\begin{lemma}\label{phaseretrieval}
Let $z_j:=x_j+iy_j,\,j=1,2,3,$ be three different complex numbers such that they are not collinear.
Then there is at most one complex number $z\in\C$ with the distances $r_j=|z-z_j|,\,j=1,2,3$. Let further $\eps>0$ and assume that
\ben
|r_j^{\eps}-r_j|\leq\eps, \quad j=1,2,3.
\enn
Here, and throughout the paper, we use the subscript $\eps$ to denote the polluted data.
Then there exists a constant $c>0$ depending on $z_j, j=1,2,3$, such that
\ben
|z^{\eps}-z|\leq c\eps.
\enn
\end{lemma}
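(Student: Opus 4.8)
The plan is to identify $\C$ with $\R^2$ via $z=x+iy\leftrightarrow(x,y)^T$ and to \emph{linearize} the three distance conditions. Each condition $r_j=|z-z_j|$ is the circle equation $(x-x_j)^2+(y-y_j)^2=r_j^2$, $j=1,2,3$, and subtracting the $j=1$ equation from the $j=2$ and $j=3$ equations removes the quadratic term $x^2+y^2$, leaving a $2\times2$ linear system $Az=b$ (with $z$ now read as the column vector $(x,y)^T$) whose coefficient matrix is
\ben
A &=& 2\left(\begin{array}{cc} x_2-x_1 & y_2-y_1 \\ x_3-x_1 & y_3-y_1\end{array}\right).
\enn
Here $b$ collects the known constants $(x_j^2+y_j^2)-(x_1^2+y_1^2)-(r_j^2-r_1^2)$, $j=2,3$. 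The determinant $\det A=4[(x_2-x_1)(y_3-y_1)-(y_2-y_1)(x_3-x_1)]$ is, up to a fixed factor, the signed area of the triangle with vertices $z_1,z_2,z_3$; since these three points are \emph{not collinear}, $\det A\neq0$. Hence any $z$ realizing the three distances must solve this invertible system, and such a solution is unique, which proves the first assertion (``at most one'').

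For the stability estimate I would apply the identical reduction to the perturbed radii $r_j^\eps$: the reconstructed point $z^\eps$ is characterized as the unique solution of the same linear system with $b$ replaced by the perturbed vector $b^\eps$, i.e. $Az^\eps=b^\eps$. Since $A$ is independent of the radii, subtracting the two systems gives $A(z^\eps-z)=b^\eps-b$, and therefore
\ben
|z^\eps-z| &\le& \|A^{-1}\|\,|b^\eps-b|.
\enn
The components of $b^\eps-b$ are differences of the form $(r_j^\eps)^2-r_j^2=(r_j^\eps-r_j)(r_j^\eps+r_j)$, each controlled by $|r_j^\eps-r_j|\,(2r_j+\eps)\le\eps(2r_j+\eps)$ via the hypothesis $|r_j^\eps-r_j|\le\eps$. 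Collecting these bounds yields $|b^\eps-b|\le C\eps$, and together with the finiteness of $\|A^{-1}\|$ --- again guaranteed precisely by non-collinearity --- this delivers $|z^\eps-z|\le c\eps$.

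The step I expect to need the most care is the \emph{definition} of $z^\eps$. The three perturbed circles of radii $r_j^\eps$ need not share a common point, so $z^\eps$ cannot be taken literally as their intersection; it must instead be defined as the solution of the linearized system $Az^\eps=b^\eps$, which exists and is unique thanks to $\det A\neq0$. Once this is pinned down, the remainder is pure linear algebra: the geometric content (non-collinearity $\Rightarrow$ nonzero triangle area $\Rightarrow$ invertible $A$) underlies both the uniqueness and the bound on $\|A^{-1}\|$, while the only analytic input is the elementary factorization of a difference of squares. I would also note honestly that the constant $c$ depends on the $r_j$ as well as on the $z_j$; this is harmless in the intended application, where $z$ (and hence the radii $r_j$) is fixed.
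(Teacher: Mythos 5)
Your proof is correct, but it takes a genuinely different route from the paper's. The paper treats the lemma geometrically (following \cite{JiLiuZhang-obstacle}, and re-derived constructively in its Phase Retrieval Scheme, formulas \eqref{xMyM}--\eqref{yB} in Subsection \ref{PR}): the circles about $z_1$ and $z_2$ with radii $r_1,r_2$ meet in at most two points $z_A,z_B$, obtained from the law of cosines and a rotation about $z_2$; these two candidates are mirror images across the line through $z_1,z_2$, so the third distance to the non-collinear point $z_3$ singles out exactly one of them, and stability is inherited from the Lipschitz dependence of this construction on the radii. You instead linearize: subtracting the circle equations kills the quadratic terms, and non-collinearity becomes precisely invertibility of the $2\times2$ matrix $A$, after which uniqueness and the bound $|z^\eps-z|\le\|A^{-1}\|\,|b^\eps-b|\le c\eps$ are pure linear algebra. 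Your route is shorter, produces an explicit constant, and---as you rightly emphasize---cleanly resolves the definitional issue that the perturbed circles need not share a common point: your $z^\eps$ solves $Az^\eps=b^\eps$ and always exists, whereas the paper's $z^\eps$ is produced by intersecting perturbed circles, which for large noise may fail to intersect at all. What the geometric route buys is that the proof \emph{is} the algorithm actually implemented in the numerical section, so its stability statement applies verbatim to the computed point; your estimate applies to the linearized reconstruction, which for noisy data differs slightly from the point delivered by the paper's scheme (though both satisfy the stated bound). Finally, your closing caveat is a correct refinement rather than a defect: the constant must also depend on the radii $r_j$ (equivalently on $|z|$), since for fixed $z_j$ the circles intersect ever more tangentially as $r_j\to\infty$; the lemma's phrasing ``depending on $z_j$'' is therefore accurate only when $z$ ranges over a bounded set, which is the case in the intended application to far field values.
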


Based on Lemma \ref{phaseretrieval}, we have the following stable phase retrieval scheme which can be implemented easily.

\begin{figure}[htbp]
\centering
\includegraphics[width=2.5in]{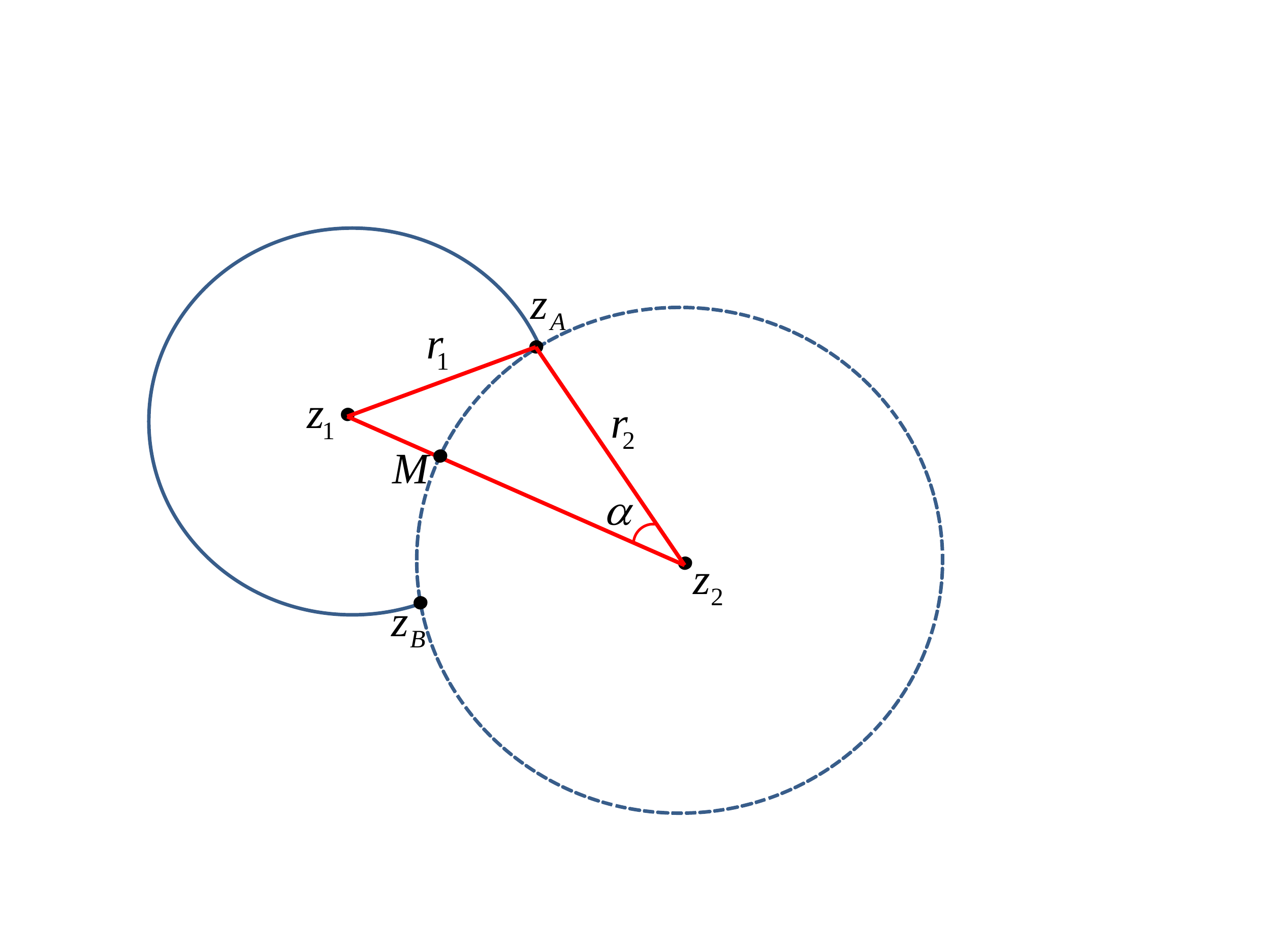}
\caption{Sketch map for phase retrieval scheme.}
\label{twodisks}
\end{figure}

{\bf Phase Retrieval Scheme.}

\begin{itemize}{\em
  \item (1). {\bf\rm Collect the distances $r_j:=|z-z_j|$ with given complex numbers $z_j,\,j=1,2,3$.} If $r_j=0$ for some $j\in\{1,2,3\}$, then $z=z_j$. Otherwise, go to next step.
  \item (2). {\bf\rm Look for the point $M=(x_M, y_M)$.} As shown in Figure \ref{twodisks}, $M$ is the intersection of circle centered at $Z_2$ with radius $r_2$ and the ray $z_2z_1$ with initial point $z_2$. Denote by $d_{1,2}:=|z_1-z_2|$ the distance between $z_1$ and $z_2$, then
       \be\label{xMyM}
       x_M=\frac{r_2}{d_{1,2}}x_1+\frac{d_{1,2}-r_2}{d_{1,2}}x_2,\quad y_M=\frac{r_2}{d_{1,2}}y_1+\frac{d_{1,2}-r_2}{d_{1,2}}y_2,
       \en
  \item (3). {\bf\rm Look for the points $z_A=(x_A, y_A)$ and $z_B=(x_B, y_B)$.} Note that $z_A$ and $z_B$ are just two rotations of $M$ around the point $z_2$. Let $\alpha\in [0,\pi]$ be the angle between rays $z_2z_1$ and $z_2z_A$. Then, by the law of cosines, we have
       \be\label{cosalpha}
       \cos \alpha=\frac{r_2^2+d_{1,2}^2- r_1^2}{2r_2d_{1,2}}.
       \en
       Note that $\alpha\in [0,\pi]$ and $\sin^2 \alpha+\cos^2 \alpha=1$, we deduce that $\sin \alpha=\sqrt{1-\cos^2 \alpha}$.
       Then
       \be
       \label{xA}x_A &=& x_2+\Re\{[(x_M-x_2)+i(y_M-y_2)]e^{-i\alpha}\},\\
       \label{yA}y_A &=& y_2+\Im\{[(x_M-x_2)+i(y_M-y_2)]e^{-i\alpha}\},\\
       \label{xB}x_B &=& x_2+\Re\{[(x_M-x_2)+i(y_M-y_2)]e^{i\alpha}\},\\
       \label{yB}y_B &=& y_2+\Im\{[(x_M-x_2)+i(y_M-y_2)]e^{i\alpha}\}.
       \en
  \item (4). {\bf\rm Determine the point $z$. $z=z_A$ if the distance $|z_Az_3|=r_3$, or else $z=z_B$.}}
\end{itemize}
\quad\\

Lemma \ref{phaseretrieval} can immediately be applied to the elastic source scattering problems. Indeed, we set $z_j=-\tau_j$, where $\tau_j\in\C, j=1,2,3$ are three scattering strengths with different principle arguments.
For any observation direction $\mathbf{\hx}\in\mathbb{S}$, there exists an arc $\mathbb{S}^{\perp}_{\mathbf{q}}$ for some $\mathbf{q}\in\mathcal{Q}$ such that $\mathbf{\hx}\in\mathbb{S}^{\perp}_{\mathbf{q}}$, i.e., $\mathbf{\hx^{\perp}\cdot q}\geq1/2$.
Applying Lemma \ref{phaseretrieval} and following the proof in Theorem \ref{uni-phaseless123}, we obtain the approximate far field pattern
$u^{\infty,\eps}_{\mathbf{F},s}(\mathbf{\hx},\om)$ from the perturbed phaseless far field data set
$\Big\{\big|u^{\infty,\eps}_{\mathbf{F}\cup\{\mathbf{z}\},s}(\mathbf{\hx,q},\tau,\om)\big| : \, \tau\in\mathcal {T}\Big\}$.

\begin{theorem}\label{sourse-stability}
Let $\tau_j\in\C, j=1,2,3$ be three scattering strengths with different principle arguments. For fixed $\mathbf{z}\in\R^2\ba\ov{\Om}$,
assume that
\ben
\left|\big|u^{\infty,\eps}_{\mathbf{F}\cup\{\mathbf{z}\},s}(\mathbf{\hx,q},\tau,\om)\big|-\big|u^{\infty}_{\mathbf{F}\cup\{\mathbf{z}\},s}(\mathbf{\hx,q},\tau,\om)\big|\right|\leq \eps,\quad \mathbf{\hx}\in\mathbb{S}^{\perp}_{\mathbf{q}}, \mathbf{q}\in\mathcal{Q}, \tau\in\mathcal {T}, \om\in\mathbb{W}.
\enn
Then we have
\be\label{Sourse-estimate}
\left|u^{\infty,\eps}_{\mathbf{F},s}(\mathbf{\hx},\om)-u^{\infty}_{\mathbf{F},s}(\mathbf{\hx},\om)\right|\leq c\eps,\quad \mathbf{\hx}\in\mathbb{S}^{\perp}_{\mathbf{q}}, \om\in\mathbb{W}.
\en
for some constant $c>0$ depending only on $\tau_j, j=1,2,3$.
\end{theorem}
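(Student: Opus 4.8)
The plan is to reduce the statement directly to the geometric stability estimate of Lemma \ref{phaseretrieval}, the only real work being to exhibit the three noisy moduli as perturbed distances to three fixed, non-collinear points whose configuration depends only on $\tau_1,\tau_2,\tau_3$.

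First I would rewrite the data representation \eqref{uinfFsz0}. Since $\mathbf{\hx}\in\mathbb{S}^{\perp}_{\mathbf{q}}$ forces $\mathbf{q\cdot\hx^{\perp}}\geq 1/2>0$ and $|e^{-ik_s\mathbf{\hx\cdot z}}|=1$, I factor out the known, nonvanishing quantity $e^{-ik_s\mathbf{\hx\cdot z}}\,(\mathbf{q\cdot\hx^{\perp}})$ and introduce the single complex number
\[
\zeta:=\frac{u^{\infty}_{\mathbf{F},s}(\mathbf{\hx},\om)\,e^{ik_s\mathbf{\hx\cdot z}}}{\mathbf{q\cdot\hx^{\perp}}},
\]
which depends on $\mathbf{\hx},\om,\mathbf{z},\mathbf{q}$ but not on $\tau$. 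With $z_j:=-\tau_j$, representation \eqref{uinfFsz0} then gives, for each $\tau=\tau_j$,
\[
\frac{|u^{\infty}_{\mathbf{F}\cup\{\mathbf{z}\},s}(\mathbf{\hx,q},\om,\tau_j)|}{\mathbf{q\cdot\hx^{\perp}}}=|\zeta-z_j|=:r_j,\qquad j=1,2,3,
\]
so that the three true moduli are exactly the three distances from the unknown point $\zeta$ to the fixed points $z_j$. The choice of $\tau_1,\tau_2,\tau_3$ guarantees that the points $z_j=-\tau_j$ are distinct and non-collinear, which is precisely the configuration hypothesis of Lemma \ref{phaseretrieval}.

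Next I would propagate the noise. Setting $r_j^{\eps}:=|u^{\infty,\eps}_{\mathbf{F}\cup\{\mathbf{z}\},s}(\mathbf{\hx,q},\om,\tau_j)|/(\mathbf{q\cdot\hx^{\perp}})$ and using the uniform lower bound $\mathbf{q\cdot\hx^{\perp}}\geq 1/2$, i.e. $1/(\mathbf{q\cdot\hx^{\perp}})\leq 2$, the assumed modulus error $\leq\eps$ transfers to $|r_j^{\eps}-r_j|\leq 2\eps$ for $j=1,2,3$, uniformly in $\mathbf{\hx}$ and $\om$. Applying Lemma \ref{phaseretrieval} to $z_1,z_2,z_3$ and the perturbed distances $r_j^{\eps}$ then produces the reconstructed value $\zeta^{\eps}$ of the Phase Retrieval Scheme together with the bound $|\zeta^{\eps}-\zeta|\leq 2c_0\eps$, where $c_0>0$ depends only on the configuration of $z_j=-\tau_j$, hence only on $\tau_1,\tau_2,\tau_3$.

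Finally I would undo the factorisation. The reconstruction is defined by $u^{\infty,\eps}_{\mathbf{F},s}(\mathbf{\hx},\om):=e^{-ik_s\mathbf{\hx\cdot z}}\,(\mathbf{q\cdot\hx^{\perp}})\,\zeta^{\eps}$, and since $|e^{-ik_s\mathbf{\hx\cdot z}}|=1$ and $0<\mathbf{q\cdot\hx^{\perp}}\leq 1$, we obtain
\[
|u^{\infty,\eps}_{\mathbf{F},s}(\mathbf{\hx},\om)-u^{\infty}_{\mathbf{F},s}(\mathbf{\hx},\om)|=(\mathbf{q\cdot\hx^{\perp}})\,|\zeta^{\eps}-\zeta|\leq|\zeta^{\eps}-\zeta|\leq 2c_0\eps,
\]
which is \eqref{Sourse-estimate} with $c:=2c_0$ depending only on $\tau_1,\tau_2,\tau_3$. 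I expect the only delicate point to be the bookkeeping that makes $c$ genuinely independent of $\mathbf{\hx}$ and $\om$: this hinges on the two-sided control $1/2\leq\mathbf{q\cdot\hx^{\perp}}\leq 1$ valid on $\mathbb{S}^{\perp}_{\mathbf{q}}$, which simultaneously bounds the amplification of the measurement error (by the factor $2$) and the shrinkage incurred when passing back to the far field pattern (by the factor $1$), so that the geometric constant $c_0$ of Lemma \ref{phaseretrieval} carries through unchanged and the estimate holds uniformly over $\mathbf{\hx}\in\mathbb{S}^{\perp}_{\mathbf{q}}$ and $\om\in\mathbb{W}$.
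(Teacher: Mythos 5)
Your proposal is correct and takes essentially the same route as the paper: it normalizes by $\mathbf{q\cdot\hx^{\perp}}\geq 1/2$ so that the three moduli from \eqref{uinfFsz0} become the distances from the unknown point $\zeta=u^{\infty}_{\mathbf{F},s}(\mathbf{\hx},\om)e^{ik_s\mathbf{\hx\cdot z}}/(\mathbf{q\cdot\hx^{\perp}})$ to the points $z_j=-\tau_j$, and then invokes the stability part of Lemma \ref{phaseretrieval}, which is precisely what the paper does by ``applying Lemma \ref{phaseretrieval} and following the proof of Theorem \ref{uni-phaseless123}.'' Your write-up simply makes explicit the error bookkeeping the paper leaves implicit, namely the factor $2$ amplification of $\eps$ from the lower bound $\mathbf{q\cdot\hx^{\perp}}\geq 1/2$ and the harmless contraction $\mathbf{q\cdot\hx^{\perp}}\leq 1$ when undoing the normalization, so the constant $c$ depends only on $\tau_1,\tau_2,\tau_3$.
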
\\

Difficulties arise for the obstacle scattering problems because of the additional unknown far field pattern $v^{\infty}_{\Om}$ corresponding to the point sources.
For a source point $\mathbf{z}\in\R^2\ba\ov{\Om}$, let $\rho:=dist(\mathbf{z},\Om)$ be the distance from $\mathbf{z}$ to the unknown target $\Om$.
By Theorem \ref{weakinteraction}, $v^{\infty}_{\Om}$ is very weak if $\rho\rightarrow\infty$, and thus
\ben
w^{\infty}_{\Om\cup\{\mathbf{z}\},ss}(\mathbf{\hat{x},d,q},\tau) = u^{\infty}_{\Om,ss}(\mathbf{\hat{x},d}) + \tau e^{-ik_s\mathbf{\hx\cdot z}}\mathbf{q}\cdot\mathbf{\hx}^{\perp} + O\left(\frac{1}{\sqrt{\rho}}\right),\quad \rho\rightarrow\infty
\enn
for all $\mathbf{\hat{x}, d,q}\in\mathbb{S}, \tau\in\C$.
Using the phase retrieval scheme, we wish to approximately reconstruct $u^{\infty}_{\Om,ss}(\mathbf{\hat{x},d})$ from the knowledge of the perturbed phaseless data
$\left|w^{\infty}_{\Om\cup\{\mathbf{z}\},ss}(\mathbf{\hat{x},d,q},\tau)\right|$ with a known error level
\be\label{uinfeps}
\left|\big|w^{\infty,\eps}_{\Om\cup\{\mathbf{z}\},ss}(\mathbf{\hat{x},d,q},\tau)\big|-\big|w^{\infty}_{\Om\cup\{\mathbf{z}\},ss}(\mathbf{\hat{x},d,q},\tau)\big|\right|\leq \eps,\, \mathbf{\hx}\in\mathbb{S}^{\perp}_{\mathbf{q}}, \mathbf{d}\in\mathbb{S}, \mathbf{q}\in\mathcal{Q},\tau\in\mathcal {T}\qquad
\en
uniformly with respect to $\rho>0$.

\begin{theorem}\label{obstacle-stability}
Let $\tau_j\in\C, j=1,2,3$ be three scattering strengths with different principle arguments. Under the measurement error estimate \eqref{uinfeps}, we have
\be\label{uDinfestimate}
\left|u^{\infty,\eps}_{\Om,ss}(\mathbf{\hat{x},d})-u^{\infty}_{\Om,ss}(\mathbf{\hat{x},d})\right|\leq c\eps+O\left(\frac{1}{\sqrt{\rho}}\right), \quad \rho\rightarrow\infty, \,\mathbf{\hx,d}\in\mathbb{S}
\en
for some constant $c>0$ depending only on $\tau_j, j=1,2,3$.
\end{theorem}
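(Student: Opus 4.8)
The plan is to realize the reconstruction of $u^{\infty}_{\Om,ss}(\mathbf{\hx,d})$ as a single application of the geometric phase retrieval Lemma \ref{phaseretrieval}, exactly in the spirit of the source case (cf. the manipulation in Theorem \ref{uni-phaseless123} and its stability counterpart Theorem \ref{sourse-stability}); the only genuinely new feature is the unknown interaction far field pattern $v^{\infty}_{\Om,s}$, which is controlled by $O(1/\sqrt{\rho})$ via Theorem \ref{weakinteraction}. I first fix $\mathbf{q}\in\mathcal{Q}$, $\mathbf{\hx}\in\mathbb{S}^{\perp}_{\mathbf{q}}$ and $\mathbf{d}\in\mathbb{S}$, so that $\mathbf{q\cdot\hx^{\perp}}\ge 1/2$. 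Recalling \eqref{wspdef} and setting the interaction-free quantity
\[
g^{\infty}_{j}:=u^{\infty}_{\Om,ss}(\mathbf{\hx,d})+\tau_{j}e^{-ik_{s}\mathbf{\hx\cdot z}}(\mathbf{q\cdot\hx^{\perp}}),\qquad j=1,2,3,
\]
the exact far field pattern splits as $w^{\infty}_{\Om\cup\{\mathbf{z}\},ss}(\mathbf{\hx,d,q},\tau_{j})=g^{\infty}_{j}+v^{\infty}_{\Om,s}(\mathbf{\hx,z,q},\tau_{j})$, where by Theorem \ref{weakinteraction} the last term is $O(1/\sqrt{\rho})$ uniformly over the finitely many strengths in $\mathcal{T}$.

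Next I would introduce the unknown complex number to be retrieved,
\[
\zeta:=\frac{e^{ik_{s}\mathbf{\hx\cdot z}}}{\mathbf{q\cdot\hx^{\perp}}}\,u^{\infty}_{\Om,ss}(\mathbf{\hx,d}),
\]
together with the three fixed reference points $z_{j}:=-\tau_{j}$. Since $|e^{ik_{s}\mathbf{\hx\cdot z}}|=1$, the ideal distances are $r_{j}:=|\zeta-z_{j}|=\tfrac{1}{\mathbf{q\cdot\hx^{\perp}}}\bigl|g^{\infty}_{j}\bigr|$, while the measurements furnish the polluted distances $r_{j}^{\eps}:=\tfrac{1}{\mathbf{q\cdot\hx^{\perp}}}\bigl|w^{\infty,\eps}_{\Om\cup\{\mathbf{z}\},ss}(\mathbf{\hx,d,q},\tau_{j})\bigr|$. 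The key estimate comes from inserting the exact $w^{\infty}$ (abbreviated $w^{\infty}=w^{\infty}_{\Om\cup\{\mathbf{z}\},ss}(\mathbf{\hx,d,q},\tau_{j})$) and using the triangle and reverse triangle inequalities,
\[
\bigl|\,|w^{\infty,\eps}|-|g^{\infty}_{j}|\,\bigr|\le \bigl|\,|w^{\infty,\eps}|-|w^{\infty}|\,\bigr|+|w^{\infty}-g^{\infty}_{j}|\le \eps+\bigl|v^{\infty}_{\Om,s}(\mathbf{\hx,z,q},\tau_{j})\bigr|,
\]
where the first summand is bounded by the measurement error \eqref{uinfeps}. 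Dividing by $\mathbf{q\cdot\hx^{\perp}}\ge 1/2$ and invoking Theorem \ref{weakinteraction} gives $|r_{j}^{\eps}-r_{j}|\le 2\eps+O(1/\sqrt{\rho})$, uniformly in $\mathbf{\hx},\mathbf{d}$.

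Lemma \ref{phaseretrieval}, applied to the three reference points $z_{j}=-\tau_{j}$ and the distances $r_{j}^{\eps}$, then produces a reconstructed value $\zeta^{\eps}$ with $|\zeta^{\eps}-\zeta|\le c_{1}\bigl(2\eps+O(1/\sqrt{\rho})\bigr)$, the constant $c_{1}$ depending only on the $z_{j}$ and hence independent of $\mathbf{\hx},\mathbf{d}$ and $\rho$. Undoing the normalization by setting $u^{\infty,\eps}_{\Om,ss}(\mathbf{\hx,d}):=e^{-ik_{s}\mathbf{\hx\cdot z}}(\mathbf{q\cdot\hx^{\perp}})\zeta^{\eps}$ and using $\mathbf{q\cdot\hx^{\perp}}\le 1$ yields $|u^{\infty,\eps}_{\Om,ss}-u^{\infty}_{\Om,ss}|\le c\eps+O(1/\sqrt{\rho})$ with $c:=2c_{1}$. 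Finally, since $\mathbb{S}=\bigcup_{\mathbf{q}\in\mathcal{Q}}\mathbb{S}^{\perp}_{\mathbf{q}}$, every direction $\mathbf{\hx}\in\mathbb{S}$ is covered by some arc, so the bound extends to all $\mathbf{\hx,d}\in\mathbb{S}$, which is \eqref{uDinfestimate}.

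The main obstacle is not the algebra but the \emph{uniformity} that lets the two error sources add rather than multiply: I must verify that the $O(1/\sqrt{\rho})$ remainder furnished by Theorem \ref{weakinteraction} is uniform in the observation direction $\mathbf{\hx}$, the polarization $\mathbf{q}\in\mathcal{Q}$, and the finitely many strengths $\tau_{j}$, and that it is carried unchanged through the phase retrieval step so that it only adds to the measurement error $\eps$ before being amplified by the single Lipschitz constant $c_{1}$. A secondary point to confirm is that the normalizing factor $\mathbf{q\cdot\hx^{\perp}}$ stays bounded away from zero — precisely the reason for restricting to $\mathbb{S}^{\perp}_{\mathbf{q}}$ — so that neither $\zeta$ nor the rescaling of the distances can blow up.
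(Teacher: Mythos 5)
Your proposal is correct and follows essentially the same route as the paper's own proof: both normalize the phaseless data by the factor $\mathbf{q\cdot\hx^{\perp}}\geq 1/2$, take $z_j=-\tau_j$ as the reference points, treat the point-source interaction term $v^{\infty}_{\Om,s}$ as an $O(1/\sqrt{\rho})$ perturbation of the distances via Theorem \ref{weakinteraction}, and conclude by Lemma \ref{phaseretrieval} together with the triangle inequality. Your write-up is merely more explicit about the error bookkeeping (the reverse triangle inequality splitting of $\eps$ and the interaction term) and about covering $\mathbb{S}=\bigcup_{\mathbf{q}\in\mathcal{Q}}\mathbb{S}^{\perp}_{\mathbf{q}}$, points the paper leaves implicit.
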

\begin{proof}
Define $r_j^{\eps}:=\big|w^{\infty,\eps}_{\Om\cup\{\mathbf{z}\},ss}(\mathbf{\hat{x},d,q},\tau_j)\big|, j=1,2,3$.
By Theorem \ref{weakinteraction}, we have
\be\label{rjeps}
r_j^{\eps}
&=&\left|u^{\infty,\eps}_{\Om,ss}(\mathbf{\hat{x},d}) + \tau_j e^{-ik_s\mathbf{\hx\cdot z}}\mathbf{q}\cdot\mathbf{\hx}^{\perp} + O\left(\frac{1}{\sqrt{\rho}}\right)\right|\cr
&=&\mathbf{q}\cdot\mathbf{\hx}^{\perp}\left|\frac{u^{\infty}_{\Om,ss}(\mathbf{\hat{x},d})e^{ik_s\mathbf{\hx\cdot z}}}{\mathbf{q}\cdot\mathbf{\hx}^{\perp}} + \tau_j + O\left(\frac{1}{\sqrt{\rho}}\right)\right|,\quad \rho\rightarrow \infty
\en
for all $\mathbf{\hx}\in\mathbb{S}^{\perp}_{\mathbf{q}}, \mathbf{d}\in\mathbb{S}, \mathbf{q}\in\mathcal{Q}$.
Here we have used the fact that $\mathbf{q}\cdot\mathbf{\hx}^{\perp}\geq1/2$ for $\mathbf{\hx}\in\mathbb{S}^{\perp}_{\mathbf{q}}$.
Let now
\ben
z_j=-\tau_j,\, j=1,2,3.
\enn
Then the assumption on the strengths implies that the three points $z_j, j=1,2,3$ are not collinear.
Applying Lemma \ref{phaseretrieval}, we have
\ben
&&\left|u^{\infty,\eps}_{\Om,ss}(\mathbf{\hat{x},d})-u^{\infty}_{\Om,ss}(\mathbf{\hat{x},d})+O\left(\frac{1}{\sqrt{\rho}}\right)\right|\cr
&=&\mathbf{q}\cdot\mathbf{\hx}^{\perp}\left|\frac{u^{\infty,\eps}_{\Om,ss}(\mathbf{\hat{x},d})e^{ik_s\mathbf{\hx\cdot z}}}{\mathbf{q}\cdot\mathbf{\hx}^{\perp}}-\frac{u^{\infty}_{\Om,ss}(\mathbf{\hat{x},d})e^{ik_s\mathbf{\hx\cdot z}}}{\mathbf{q}\cdot\mathbf{\hx}^{\perp}}+O\left(\frac{1}{\sqrt{\rho}}\right)\right|\cr
&\leq& c\eps,
\enn
for some constant $c>0$ depending only on $\tau_j, j=1,2,3$. The stability estimate \eqref{uDinfestimate}
now follows by using the triangle inequality.
\end{proof}

Finally, we want to remark that the same estimates in Theorems \ref{sourse-stability} and \ref{obstacle-stability} also hold for the other phaseless far field data with appropriate modification of the observation arcs.

\subsection{Scatterer shape reconstruction}
This subsection is devoted to introduce some direct sampling methods for reconstruction of $\Omega$ by using phaseless shear
far field data $\big|w^{\infty}_{\Omega\cup\{\mathbf{z}\},ss}\big|$. The direct sampling methods proposed
do not need any a priori information of the obstacle.

For some polarization direction $\bf q$, We first introduce two auxiliary functions
\be\label{GA}
G\mathbf {(p,d,q)}&:=&\int_{\mathbb{S}}u^{\infty}_{\Omega,ss}\mathbf{(\hx,d)}e^{ik_{s}\mathbf{\hx\cdot p}}\mathbf{q\cdot \hx ^\perp} ds(\mathbf{\hx}),\quad\,\mathbf{p}\in\R^2,\cr
A\mathbf{(p,q)}&:=&\int_{\mathbb{S}}G\mathbf{(p,d,q)}e^{-ik_s\mathbf{d\cdot\,p}}\mathbf{q\cdot d^\perp} ds(\mathbf{d}),\quad\,\mathbf{p}\in\R^2.
\en
By the well-known Riemann-Lebesgue Lemma, both $G$ and $A$ tend to $0$ as $|\mathbf{p}|\rightarrow\infty$.
In fact, due to the  systematic analysis in \cite{JiLiuXi}, $G$ is a superposition of the Bessel functions.
We thus expect that $G$ (and therefore $A$) decays like Bessel functions as the sampling points away from
the boundary of the scatterer. Then one may look for the scatterers by using the following indicators \cite{JiLiuXi}
with phased far field patterns
\be\label{indicator23}
{\bf{ I_2}(p)=\bigcup_{\mathbf{q}\in\mathcal{Q}}\big|}A{\bf(p,q)\big| \quad\mbox{and}\quad {\bf I_3}(z,d)=\bigcup_{\mathbf{q}\in\mathcal{Q}}\big|}G{\bf (p,d,q)\big|.}
\en
In \cite{JiLiuXi}, it has been shown that the indicator ${\bf I_2}$
has a positive lower bound for sampling points
inside the scatterer, and decays like Bessel functions as the sampling points tend to infinity.
If the size of the scatterer $\Om$ is small enough (compared with the wavelength),
${\bf I_3}$ takes its local maximum at the location of the scatterer.

Consider now the case of phaseless far field measurements.
Using the notations in Theroem 2.3, for all $\mathbf{\hx,d}\in\mathbb{S}$, fixed $\tau_1\in \C\ba\{0\}$ and $\mathbf{z_0}\in\R^2\ba\ov{\Omega}$, we have
\be\label{F}
&&\mathcal {F}_{\bf z_0}(\mathbf{\hx,d,q},\tau_1)\cr
&:=&|w^{\infty}_{\Omega\cup\{\mathbf{z_0}\},ss}(\mathbf{\hx, d, q},\tau_1)|^2-|u^{\infty}_{\Omega,ss}\mathbf{(\hx, d)}|^2-|\tau_1\mathbf{q\cdot \hx^\perp}|^2\cr
&=&\Big|u^{\infty}_{\Omega,ss}\mathbf{(\hx, d)}+\tau_1 e^{-ik_s\mathbf{z_0\cdot\hx}}\mathbf{q\cdot \hx^\perp} + O\Big(\rho^{-1/2}\Big)\Big|^2-|u^{\infty}_{\Omega,ss}\mathbf{(\hx, d)}|^2-|\tau_1\mathbf{q\cdot \hx^\perp}|^2\cr
&=&u^{\infty}_{\Omega,ss}\mathbf{(\hx, d)}\ov{\tau_1}e^{ik_s\mathbf{z_0\cdot\hx}}\mathbf{q\cdot \hx^\perp} +\ov{u^{\infty}_{\Omega,ss}\mathbf{(\hx, d)}}\tau_1 e^{-ik_s\mathbf{z_0\cdot\hx}}\mathbf{q\cdot \hx^\perp} + O\Big(\rho^{-1/2}\Big).
\en
Then we introduce the following
two indicators
\be
\label{Indicator01}{\bf I_{\mathbf{z_0}}}\mathbf{(p,d)}&:=&\sum_{\mathbf{q}\in\mathcal{Q}}\left|\int_{\mathbb{S}} \mathcal {F}_{\bf z_0}(\mathbf{\hx,d,q},\tau_1)\cos[k_s\mathbf{\hx\cdot(p-z_0)}]ds(\mathbf{\hx})\right|^2,\,\mathbf{d}\in\mathbb{S},\,\mathbf{p}\in\R^2,\qquad 
\en
and
\be
\label{Indicator0f}{\bf I_{\mathbf{z_0}}}\mathbf{(p)}&:=&\int_{\mathbb{S}}{\bf I_{\mathbf{z_0}}}\mathbf{(p,d)}ds(\mathbf{d}),\,\quad\mathbf{p}\in\R^2.\quad
\en
Insert \eqref{F} into \eqref{Indicator01}-\eqref{Indicator0f}. Then a straightforward calculation shows that
\ben
&&{\bf I_{\mathbf{z_0}}}\mathbf{(p,d)}=\sum_{\mathbf{q}\in\mathcal{Q}}\left| V_{\mathbf{z_0}}\mathbf{(p,d,q)}+\ov{V_{\mathbf{z_0}}\mathbf{(p,d,q)}} \right|^2+ O\Big(\rho^{-1/2}\Big),\,\mathbf{d}\in\mathbb{S},\,\mathbf{p}\in\R^2,\cr
&&{\bf I_{\mathbf{z_0}}}\mathbf{(p)}=\int_{\mathbb{S}} \sum_{\mathbf{q}\in\mathcal{Q}}\left| V_{\mathbf{z_0}}\mathbf{(p,d,q)}+\ov{V_{\mathbf{z_0}}\mathbf{(p,d,q)}} \right|^2ds(\mathbf{d})+ O\Big(\rho^{-1/2}\Big),\,\mathbf{p}\in\R^2
\enn
with
\ben
V_{\mathbf{z_0}}\mathbf{(p,d,q)}:=\frac{\ov{\tau_1}}{2}\Big[G\mathbf{(z,d,q)}+G\mathbf{(2z_0-p,d,q)}\Big],\quad \,\mathbf{p}\in\R^2, \mathbf{d}\in\mathbb{S}, \mathbf{q}\in\mathcal{Q}.
\enn
Let $\Omega(\mathbf{z_0})$ be the point symmetric domain of $\Omega$ with respect to $\mathbf{z_0}$.
If the size of the scatterer $\Omega$ is small enough, from the properties of $G$, we expect that the indicator
${\bf I_{\mathbf{z_0}} (p, d)}$ takes its local maximum on the locations of $\Omega$ and $\Omega(\mathbf{z_0})$.
For extended scatterer $\Omega$, we expect that
the indicator ${\bf I_{z_0}}$ takes its maximum on or near the boundary $\pa \Omega\cup\pa \Omega(\mathbf{z_0})$.

Note that the indicator ${\bf I_{z_0}(p)}$ (or ${\bf I_{\mathbf{z_0}} (p, d)}$ the case of small scatterers) produces a false scatterer $\Omega(\mathbf{z_0})$.
However, since we have the freedom to choose the point $\mathbf{z_0}$, we can always choose it such that
the false domain $\Omega(\mathbf{z_0})$ located outside our searching domain of interest. One may also overcome this problem by considering another indicator
 ${\bf I_{z_1}(p)}$ (or ${\bf I_{z_1}(p, d)}$ the case of small scatterers)  with $\mathbf{z_1}\in \R^2\ba\ov{\Omega}$ and $\bf z_1\neq z_0$.\\

{\bf Scatterer Reconstruction Scheme One.}
\begin{itemize}{\em
  \item Collect the phaseless data set
       \ben
       \big\{|w^{\infty}_{\Omega\cup\{\mathbf z_0\},ss}(\mathbf{\hx,d,q},\tau)|:\, \mathbf{\hx}\in\mathbb{S}_{q}^{\perp}, \mathbf{q}\in \mathcal{Q}, \mathbf{d}\in\mathbb{S}, \tau\in\{0,\tau_1\}\big\}.
       \enn
  \item Select a sampling region in $\R^{2}$ with a fine mesh $\mathcal {Z}$ containing the scatterer $\Omega$.
  \item Compute the indicator functional ${\bf I_{z_0}(p)}$ (or ${\bf I_{z_0}(p,d)}$ with some fixed $\mathbf{d}\in\mathbb{S}$ in the case of small scatterers) for all sampling points $\mathbf{p}\in\mathcal{Z}$.
  \item Plot the indicator functional ${\bf I_{z_0}(p)}$ (or ${\bf I_{z_0}(p,d)}$  in the case of small scatterers).}
\end{itemize}

Using the Phase Retrieval Scheme proposed in the previous subsection, we can retrieve approximately the phased far field pattern $u_{\Omega,ss}^{\infty}$. Then we have the second scatterer reconstruction algorithm.

{\bf Scatterer Reconstruction Scheme Two.}
\begin{itemize}{\em
  \item Collect the phaseless data set
       \ben
       \big\{|w^{\infty}_{\Omega\cup\{\mathbf z_0\},ss}(\mathbf{\hx,d,q},\tau)|:\, \mathbf{\hx}\in\mathbb{S}_{q}^{\perp}, \mathbf{q}\in \mathcal{Q}, \mathbf{d}\in\mathbb{S}, \tau\in\mathcal{T}\big\}.
       \enn
  \item Use the {\bf Phase Retrieval Scheme} to retrieve approximately the phased far field patterns $u^{\infty}_{\Omega,ss}\mathbf{(\hx, d)}$
  for all $\mathbf{\hx, d}\in\mathbb{S}$.
  \item Select a sampling region in $\R^{2}$ with a fine mesh $\mathcal{Z}$ containing $\Omega$.
  \item Compute the indicator functional ${\bf I_2(p)}$ (or ${\bf I_3(p,d)}$ with fixed $d\in\mathbb{S}$ in the case of small
  scatterers) for all sampling points $\mathbf{p}\in\mathcal {Z}$.
  \item Plot the indicator functional ${\bf I_2(p)}$ (or ${\bf I_3(p,d)}$ in the case of small scatterers).}
\end{itemize}

\subsection{Source support reconstruction}
The uniqueness results discussed before ensure the possibility to reconstruct the unknown objects by stable algorithms. In this section, we investigate the numerical methods for support reconstruction of the source $\bf F$ using phaseless far field data $\big| u^{\infty}_{{\bf F\cup \{z\}},s} \big|$.

Denote by $\Theta$ a finite set with finitely many observation directions as elements. We first introduce an auxiliary function
\be\label{H}
H\mathbf{(p,\hx)}:=\int_{\mathbb{W}}u^{\infty}_{\mathbf{F},s}(\mathbf{\hx},\om)e^{ik_s\mathbf{\hx\cdot p}}d\om, \quad\,\mathbf{p}\in\R^2,\,\mathbf{\hx}\in \Theta.
\en
Clearly,
\be\label{I1behavior1}
H(\mathbf{p}+\alpha\mathbf{\hx^{\perp}, \hx})=H(\mathbf{p,\hx}), \quad \mathbf{p}\in\R^2, \,\alpha\in\R.
\en
This further implies that the functional $H$ has the same value for sampling points in the hyperplane with normal direction $\mathbf{\hx}$.
By the well known Riemann-Lebesgue Lemma, we obtain that $H$ tends to $0$ as $|\mathbf{p}|\rightarrow\infty$.

Recall from \eqref{uinfFs} that the far field pattern has the following representation
\be
\mathbf{u}^{\infty}_{\mathbf{F},s}(\mathbf{\hx},\om)=\int_{\R^2}e^{-ik_s\mathbf{\hx\cdot y}}\mathbf{\hx^{\perp}\cdot F(y)}ds(\mathbf{y}),\quad \mathbf{\hx}\in\mathbb{S},\,\om\in \mathbb{W},
\en
Inserting it into the indicator $H$ defined in \eqref{H}, changing the order of integration, and integrating by parts, we have
\be
H(\mathbf{p,\hx})
&=&\int_{\Omega}\int_{\mathbb{W}}e^{ik_s\mathbf{\hx\cdot (p-y)}} \mathbf{\hx^{\perp}\cdot F(y)}d\om d\mathbf{y} \cr
&=&\int_{\Omega}\frac{S_\mathbf{p}\mathbf{(y,\hx)}}{i\mathbf{\hx\cdot\,(p-y)}}d\mathbf{y},
\en
where $S_\mathbf{p}\in L^{\infty}(\Omega)$ is given by
\ben
S_\mathbf{p}\mathbf{(y,\hx)}:=\mathbf{\hx^{\perp}\cdot F(y)}e^{ik_s\mathbf{\hx\cdot (p-y)}}\Big|^{\om_{max}}_{\om_{min}}.
\enn
This implies that the functional $H$ is a superposition of functions that decays as $1/|\mathbf{\hx\cdot(p-y)}|$ as the sampling point $\mathbf{p}$ goes away from the strip $S_{\Omega}(\mathbf{\hx})$.

For any $\mathbf{\hx}\in \mathbb{S},\,\om\in\mathbb{W},\,\tau\in\C$, we define
\be\label{F1}
\mathcal {K}_{\bf z}(\mathbf{\hx,q},\om,\tau)
&:=&|u^{\infty}_{\bf F\cup{\{z\}},s}(\mathbf{\hx, q},\om,\tau)|^2-|u^{\infty}_{\mathbf{F},s}({\bf\hx}, \om)|^2-|\tau\mathbf{q\cdot \hx^\perp}|^2\cr
&=&|u^{\infty}_{\mathbf{F},s}({\bf\hx}, \om)+\tau e^{-ik_s\mathbf{z\cdot\hx}}\mathbf{q\cdot \hx^\perp}|^2-|u^{\infty}_{\mathbf{F},s}({\bf\hx}, \om)|^2-|\tau\mathbf{q\cdot \hx^\perp}|^2\cr
&=&\Big(u^{\infty}_{\mathbf{F},s}({\bf\hx}, \om)\ov{\tau}e^{ik_s\mathbf{z\cdot\hx}}+\ov{u^{\infty}_{\mathbf{F},s}({\bf\hx}, \om)}\tau e^{-ik_s\mathbf{z\cdot\hx}}\Big)\mathbf{q\cdot \hx^\perp}.
\en
Then, for any fixed $\tau\in \C\ba\{0\}$ and $\mathbf{z_0}\in\R^2\ba\ov{\Omega}$, we introduce the following indicator
\be
\label{Indicator011}{\bf I^{\Theta}_{z_0,S}}(\mathbf{p})&:=&\sum_{\hx\in\Theta,\mathbf{q}\in\mathcal{Q}}\left|\int_{\mathbb{W}} \mathcal {K}_{\bf z_0}(\mathbf{\hx,q},\om,\tau)\cos[k_s\mathbf{\hx\cdot(p-z_0)}]d\om\right|,\mathbf{p}\in\R^2.
\en
Inserting \eqref{F1} into \eqref{Indicator011}, straightforward calculations show that
\ben
{\bf I^{\Theta}_{z_0,S}}(\mathbf{p})=\sum_{\mathbf{\hx}\in\Theta,\mathbf{q}\in\mathcal{Q}}\left| U_{\bf z_0}\mathbf{(p,\hx,q)}+\ov{U_{\bf z_0}\mathbf{(p,\hx,q)}}\right|,\mathbf{p}\in\R^2
\enn
with
\ben
U_{\bf z_0}\mathbf{(p,\hx,q)}:=\frac{\ov{\tau} \mathbf{q\cdot \hx^\perp}}{2}\Big[H(\mathbf{p,\hx})+H(\mathbf{2z_0-p,\hx})\Big],\quad \mathbf{p}\in\R^2, \mathbf{\hx}\in\Theta,\mathbf{q}\in\mathcal{Q}.
\enn
Let $\Omega(\mathbf{z_0})$ be again the point symmetric domain of $\Omega$ with respect to $z_0$.
We expect that the indicator ${\bf I^{\Theta}_{\mathbf{z_0},S}}$ takes its maximum on the locations of $\Omega$ and $\Omega(\mathbf{z_0})$. Similarly to the phaseless obstacle scattering problem, we can
choose $\mathbf{z_0}$ such that the false domain $\Omega(\mathbf{z_0})$ located outside our searching domain of interest or another point $\mathbf{z_1}\in \R^2\ba\ov{\Omega}$ and $\mathbf{z_1\neq z_0}$.
\quad\,\\

{\bf Source Support Reconstruction Scheme One.}
\begin{itemize}{\em
  \item (1). Collect the phaseless data set
         \ben
         \big\{|u^{\infty}_{\mathbf{F\cup\{z_0\}},s}(\mathbf{\hx, q}, \om, \tau)|:\,\mathbf{\hx}\in \Theta\cap\mathbb{S}^{\perp}_{\bf q}, {\mathbf{q}}\in\mathcal{Q},\,\om\in \mathbb{W},\, \tau\in \mathcal \{0, \tau_1\}\}\big\}.
         \enn
  \item (2). Select a sampling region in $\R^{2}$ with a fine mesh $\mathcal {Z}$ containing the source support $\Omega$.
  \item (3). Compute the indicator functional ${\bf I^{\Theta}_{\mathbf{z_0}, S}}(\mathbf{p})$  for all sampling points $\mathbf p\in\mathcal {Z}$.
  \item (4). Plot the indicator functional ${\bf I^{\Theta}_{\mathbf{z_0},S}}(\mathbf{p})$ }.
\end{itemize}\,\quad\\

Using the {\bf Phase Retrieval Scheme} proposed in the previous subsection, we obtain the phased far field pattern $u_{\mathbf{F},s}^{\infty}$. Then we have the second scatterer reconstruction algorithm using the following indicator \cite{AlaHuLiuSun}
\be\label{indicatorM}
{\bf I^{\Theta}_{S}(p)}:=\sum_{\hx\in\Theta}\Big|H\mathbf{(p,\hx)}\Big|,\quad \mathbf{p}\in\R^2.
\en

{\bf Source Support Reconstruction Scheme Two.}
\begin{itemize}{\em
  \item (1). Collect the phaseless data set
        \ben
        \big\{|u^{\infty}_{\mathbf{F\cup{\{z_0\}}},s}(\mathbf{\hx, q}, \tau,\om)|:\,\hx\in\Theta\cap\mathbb{S}^{\perp}_{\bf q}, {\bf q}\in\mathcal{Q},\, \om\in\mathbb{W},\,  \tau\in\mathcal{T}\big\}.
        \enn
  \item (2). Use the {\bf Phase Retrieval Scheme} to obtain the phased far field patterns $u^{\infty}_{\mathbf{F},s}(\mathbf{\hx}, \om)$ for all $\mathbf{\hx}\in\Theta, \, \om\in\mathbb{W}$.
  \item (3). Select a sampling region in $\R^{2}$ with a fine mesh $\mathcal {Z}$ containing $\Omega$.
  \item (4). Compute the indicator functional ${\bf I^{\Theta}_{S}(p)}$ for all sampling points $\mathbf{p}\in\mathcal {Z}$.
  \item (5). Plot the indicator functional ${\bf I^{\Theta}_{S}(p)}$.}
\end{itemize}

\section{Numerical experiments}\label{Numericals}
Now we present a variety of numerical examples in two dimensions to illustrate the applicability
and effectiveness of our sampling methods. In the simulations, we use $0.05$ as the sampling space. If not otherwise stated, we add $10\%$ noise. We take $\tau=1$ for the indicators ${\bf I_{z_0}}$ and ${\bf I_{z_0, S}^{\Theta}}$, while $\tau=\pm 0.5, 0.5i$ are used in the {\bf Phase Retrieval Scheme}.
\subsection{Phaseless inverse scattering problems}
There are totally four groups of numerical tests to be considered, and they are
respectively referred to as {\bf ${\bf I_{z_0}(p)}$-Big, ${\bf I_{z_0}(p,d)}$-Small, PhaseRetrieval,
{\bf $\bf I_{2}-$Big$\bf +I_{3}-$Small}}. The boundaries of the scatterers used in our numerical experiments are parameterized as follows.
\begin{small}
\begin{eqnarray}
\label{circle}&\mbox{\rm Circle:}&\quad \mathbf{x}(t)\ =(a,b)+0.1\ (\cos t, \sin t),\quad 0\leq t\leq2\pi,\\
\label{kite}&\mbox{\rm Kite:}&\quad \mathbf{x}(t)\ =\ (\cos t+0.65\cos 2t-0.65, 1.5\sin t),\quad 0\leq t\leq2\pi,
\end{eqnarray}
\end{small}
with $(a,b)$ being the location.

The direct problem is solved  by the boundary integral
equation method \cite{JiLiuXi}.
Define $\theta_l:=2\pi l/N,\,l=0,1,\cdots,N-1$, let ${\bf d}_l=(\cos\theta_l,\sin\theta_l)^T$ and ${\bf\hx}_j=(\cos\theta_j, \sin\theta_j)^T$ for $j,l=0,1,\cdots, N-1$. In our simulations, we compute the far field patterns $w_{\Om\cup\{\mathbf z_0\},ss}^\infty({\bf\hx}_j, {\bf d}_l, {\bf q},\tau)$, ${\bf q\in\mathcal{Q}},\, j,l=0,1,\cdots, N-1$,
for $N$ equidistantly distributed incident directions and $N$ observation directions.
We further perturb this data by random noise
\ben
&&\Big|w_{\Om\cup\{\mathbf z_0\},ss}^{\infty,\delta}({\bf\hx}_j, {\bf d}_l,{\bf q},\tau)\Big| \cr
&=& |w_{\Om\cup\{\mathbf z_0\},ss}^{\infty}({\bf\hx}_j, {\bf d}_l,{\bf q},\tau)| (1+\delta*e_{rel}), \quad {\bf q\in\mathcal{Q}},\, j,l=0,1,\cdots, N-1,
\enn
where $e_{rel}$ is a uniformly distributed random number in the open interval $(-1,1)$.
The value of $\delta$ used in our code is the relative error level.
We also consider absolute error in {\bf Example PhaseRetrieval}. In this case, we perturb the phaseless data
\ben
&&\Big|w_{\Om\cup\{\mathbf z_0\},ss}^{\infty,\delta}({\bf\hx}_j, {\bf d}_l,{\bf q},\tau)\Big|\cr
&=& \max\Big\{0,\,|w_{\Om\cup\{\mathbf z_0\},ss}^{\infty}({\bf\hx}_j, {\bf d}_l,{\bf q},\tau)|+\delta*e_{abs}\Big\},\quad{\bf q\in\mathcal{Q}},\, j,l=0,1,\cdots, N-1,
\enn
where $e_{abs}$ is again a uniformly distributed random number in the open interval $(-1,1)$.
Here, the value $\delta$ denotes the total error level in the measured data.

In the following experiments, we take $N=512$,  the Lam$\acute{\mbox{e}}$ constants $\lambda=1$ and $\mu=1$,
and the circular frequency $\om=8\pi$.\\

\textbf{Example ${\bf I_{z_0}(p)}$-Big}. This example checks the validity of our method for scatterers with
different source points. We consider the benchmark example with a kited domain.
Figure \ref{NDSKite} shows the results with three source points $\mathbf{z_0}=(2,4)^{T}, (4,4)^{T}$
and $(12,12)^{T}$.
As expected, the indicator ${\bf I_{z_0}(p)}$ takes a large value on $\pa \Om\cup\pa \Om(\mathbf{z_0})$, where $\Om(\mathbf{z_0})$
is the symmetric domain of $\Om$ with respect to the source point $\bf z_0$. The symmetric domain of $\mathbf{z_0}=(12,12)$
is outside of the sampling space. Note that $\Om({\bf z_0})$ changes as the source point $\mathbf{z_0}$ changes,
thus it is very easy to pick the correct domain $\Om$ by considering the indicator ${\bf I_{\mathbf{z_0}}}$
with different source points, or we can just choose $\mathbf{z_0}$ far enough.
As shown in Figures \ref{NDSKite}, the left hand kite should be the one desired. \\

\begin{figure}[htbp]
  \centering
  \subfigure[\textbf{$\mathbf{z_0}=(2,4)^{T}$.}]{
    \includegraphics[width=1.5in,height=1.4in]{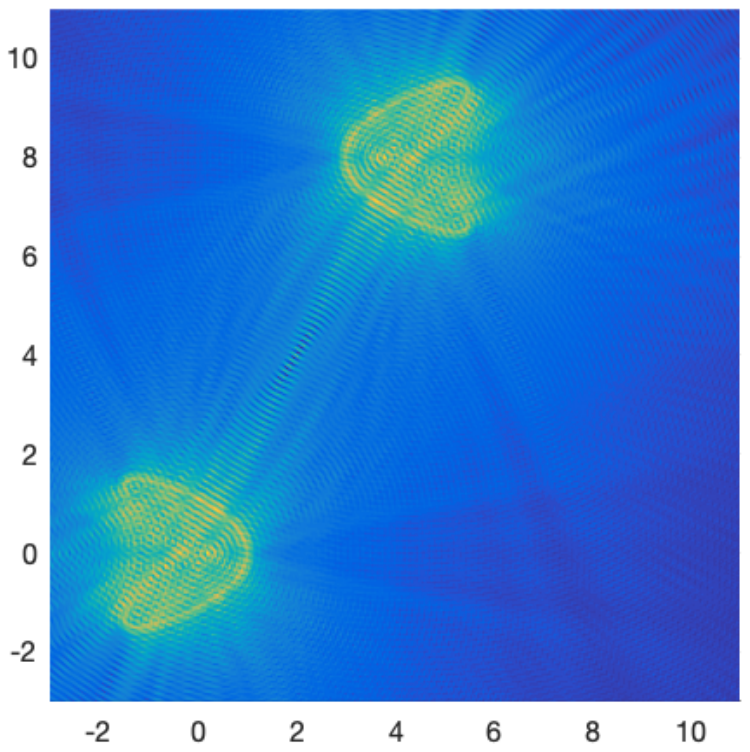}}
  \subfigure[\textbf{$\mathbf{z_0}=(4,4)^{T}$.}]{
    \includegraphics[width=1.5in,height=1.4in]{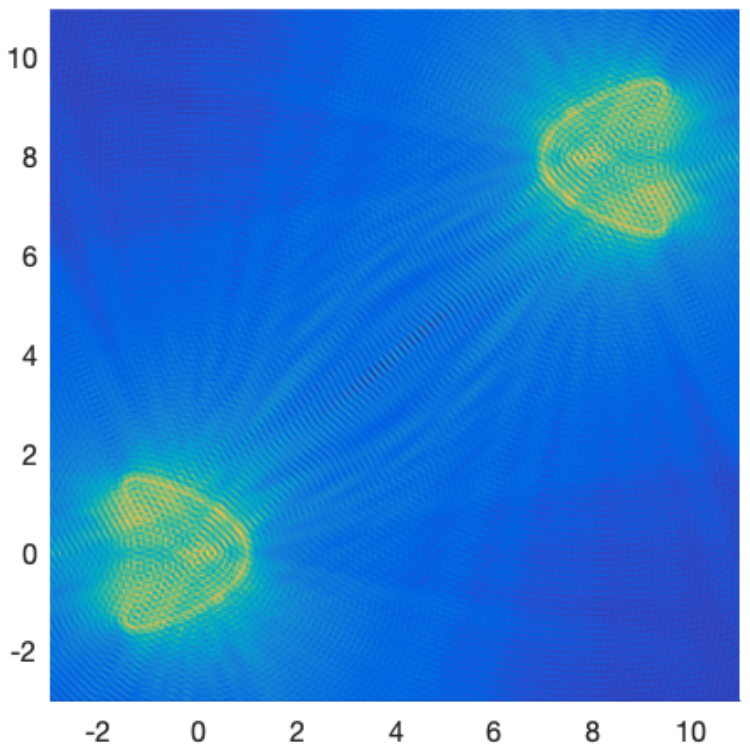}}
  \subfigure[\textbf{$\mathbf{z_0}=(12,12)^{T}$.}]{
    \includegraphics[width=1.5in,height=1.4in]{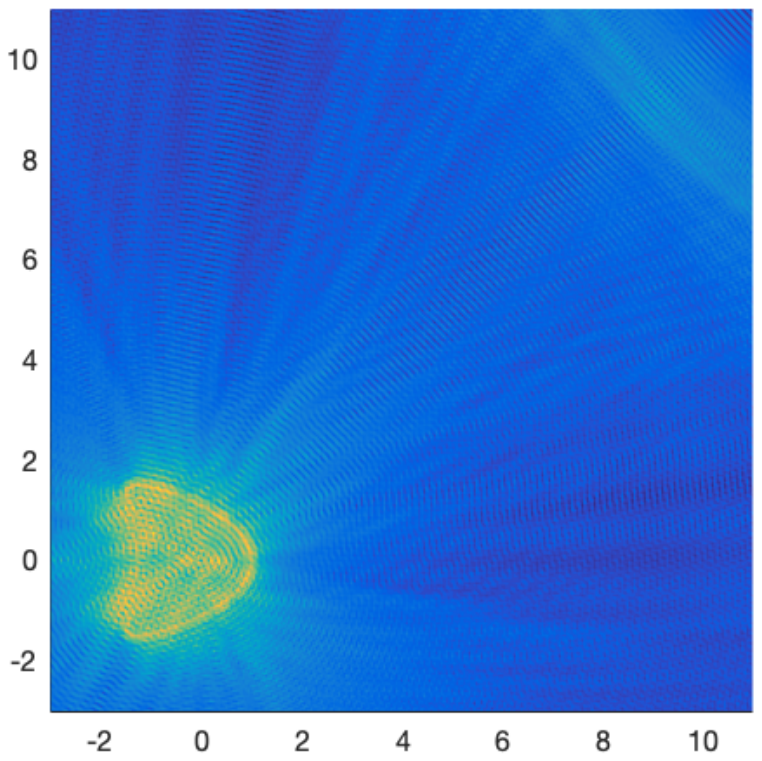}}
\caption{{\bf Example ${\bf I_{z_0}(p)}$-Big.}\, Reconstruction of kite shaped domain with different source points.}
\label{NDSKite}
\end{figure}

\textbf{Example ${\bf I_{z_0}(p,d)}$-Small}. In this example, the scatterer is a combination of two
mini disks with radius $0.1$, one centered at
$(a,b)=(3,3)$ and the other at $(a,b)=(1,1)$.  Figure \ref{NDSsmall} shows
the reconstructions using ${\bf I_{z_0}(p,d)}$, ${\bf d}=(1,0)^{T}$ with the same source points as in
the \textbf{Example ${\bf I_{z_0}(p)}$-Big}.
\begin{figure}[htbp]
  \centering
  \subfigure[\textbf{$\mathbf{z_0}=(2,4)^T$.}]{
    \includegraphics[width=1.55in,height=1.4in]{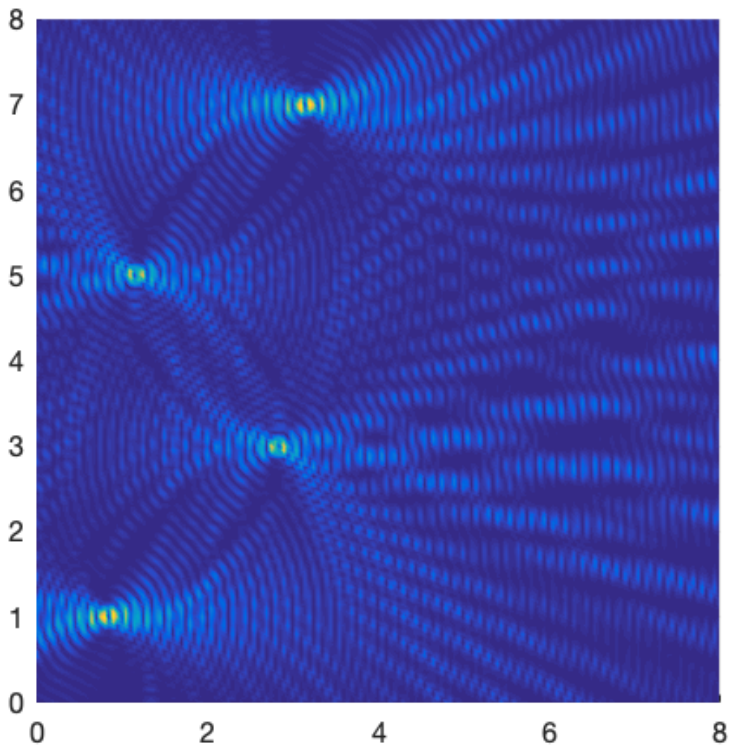}}
  \subfigure[\textbf{$\mathbf{z_0}=(4,4)^T$.}]{
    \includegraphics[width=1.5in,height=1.4in]{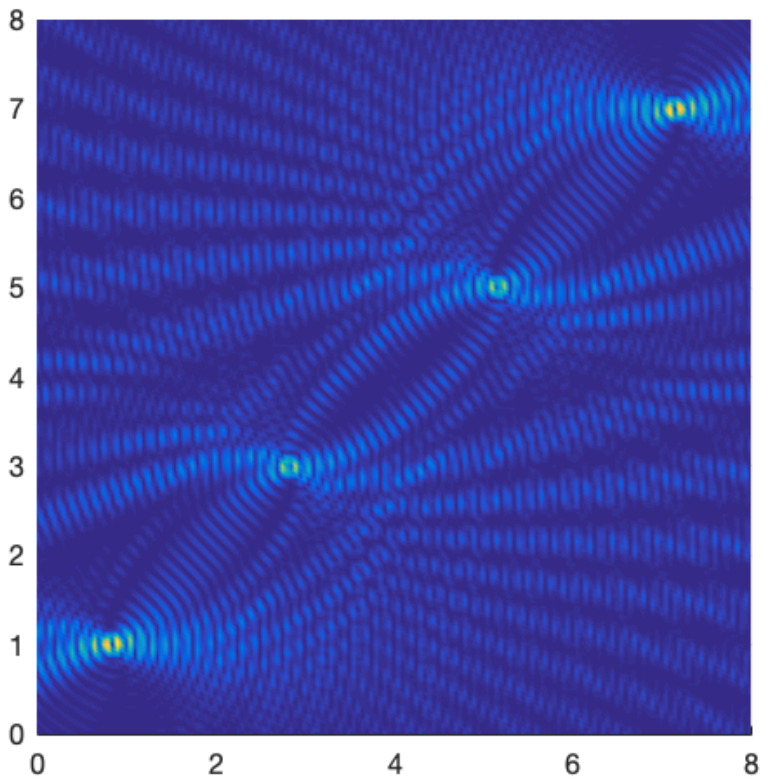}}
  \subfigure[\textbf{$\mathbf{z_0}=(12,12)^T$.}]{
    \includegraphics[width=1.43in,height=1.4in]{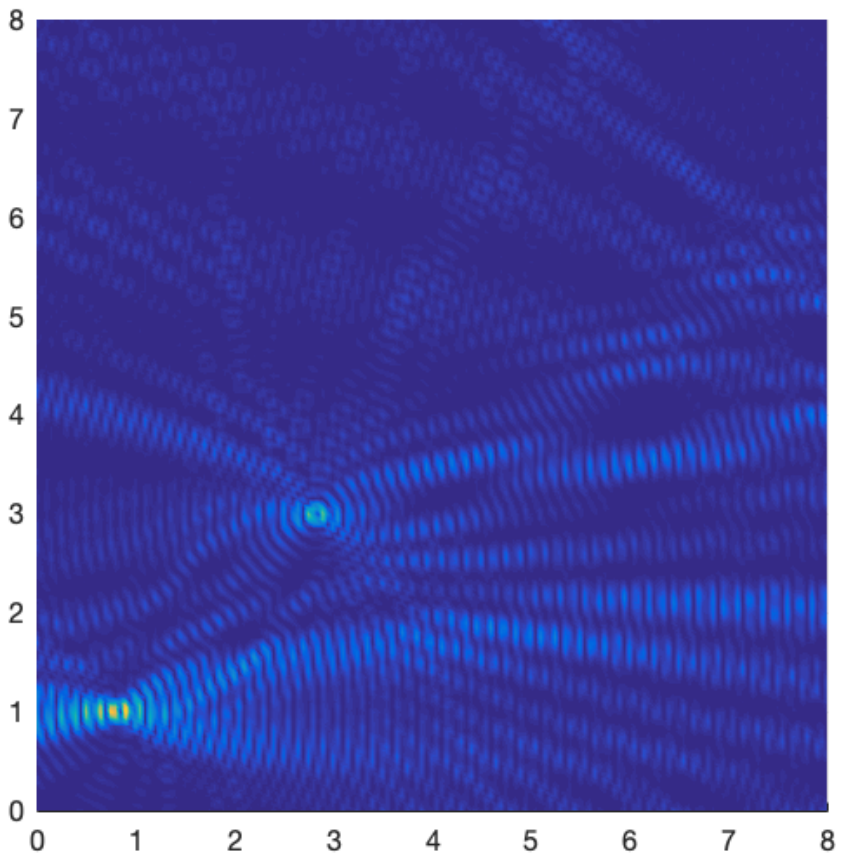}}
\caption{{\bf Example ${\bf I_{z_0}(p,d)}$-Small.}\, Reconstruction of two small disks
by using ${\bf I_{z_0}(p,d)}$with different source points. Here, $\mathbf{d}=(1,0)^T$.}
\label{NDSsmall}
\end{figure}

\textbf{Example PhaseRetrieval}.
This example is designed to check the phase retrieval scheme proposed in Subsection \ref{PR}.
The underlying scatterer is chosen to be a kite shaped domain.
For comparison, we consider the real part of far field pattern at a fixed incident direction $\mathbf{d}=(1,0)^T$.
Figure \ref{zero} shows the results without measurement noise by using three different source
points $(2,4)^T, \,(4,4)^T$ and $(12,12)^T$. In particular, the source point
$(2,4)^T$ is very close to the kite shaped domain. However, Figure \ref{zero}(a) shows that the
multiple scattering is very week. Of course, Figures \ref{zero}(b)-(c) show that the interaction
between the source point and the kite shaped domain decreases as the source point away from the target.
Figures \ref{relative}-\ref{absolute} show the results with relative error and absolute error
considered, respectively.
We find that our phase retrieval scheme is quite robust with respect to noise. This also verifies the theory
provided in Theorem \ref{obstacle-stability}.\\

\begin{figure}[htbp]
  \centering
  \subfigure[\textbf{${\bf z_0}=(2,4)^T$.}]{
    \includegraphics[width=1.5in,height=1in]{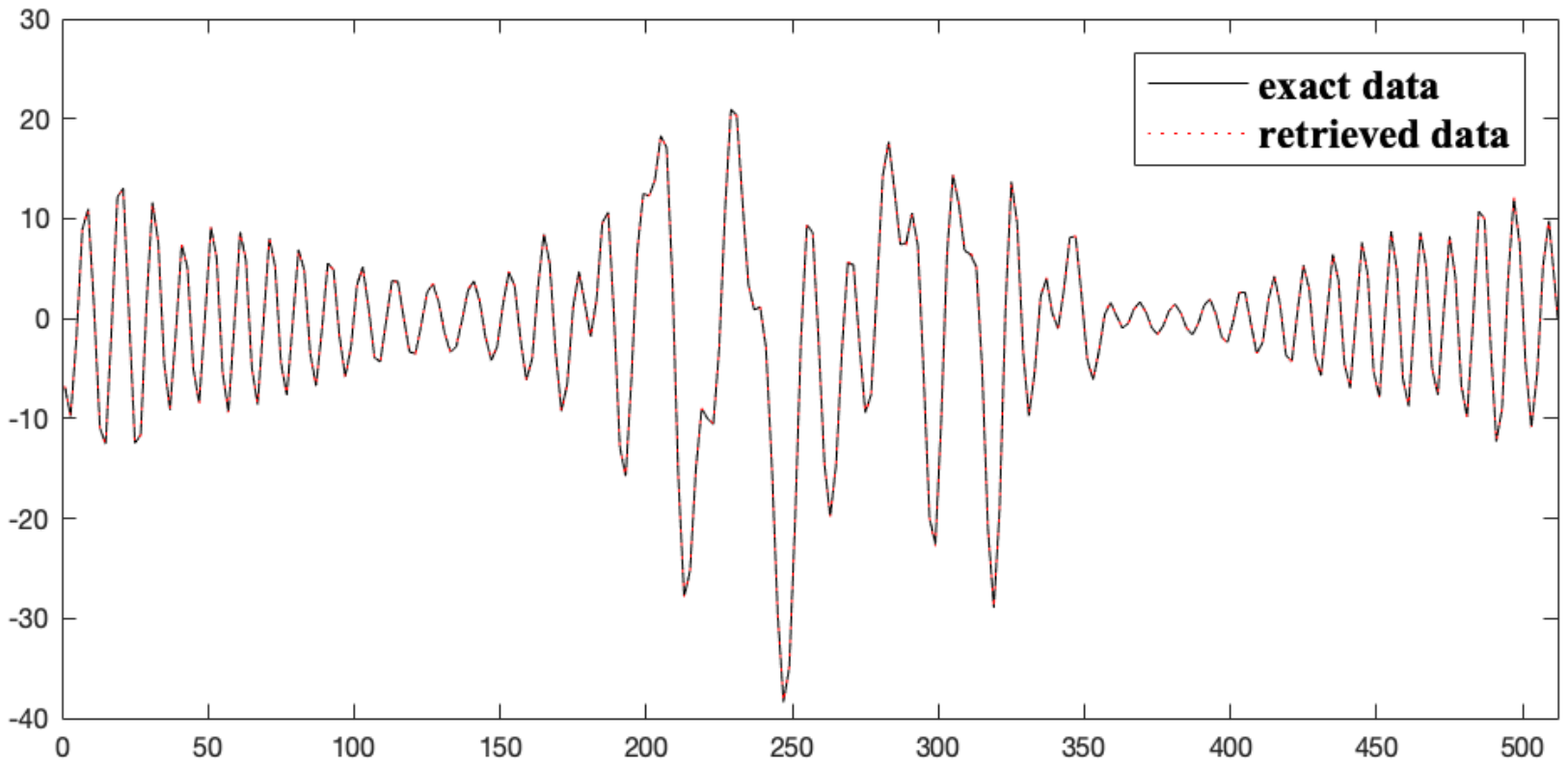}}
  \subfigure[\textbf{${\bf z_0}=(4,4)^T$.}]{
    \includegraphics[width=1.5in,height=0.99in]{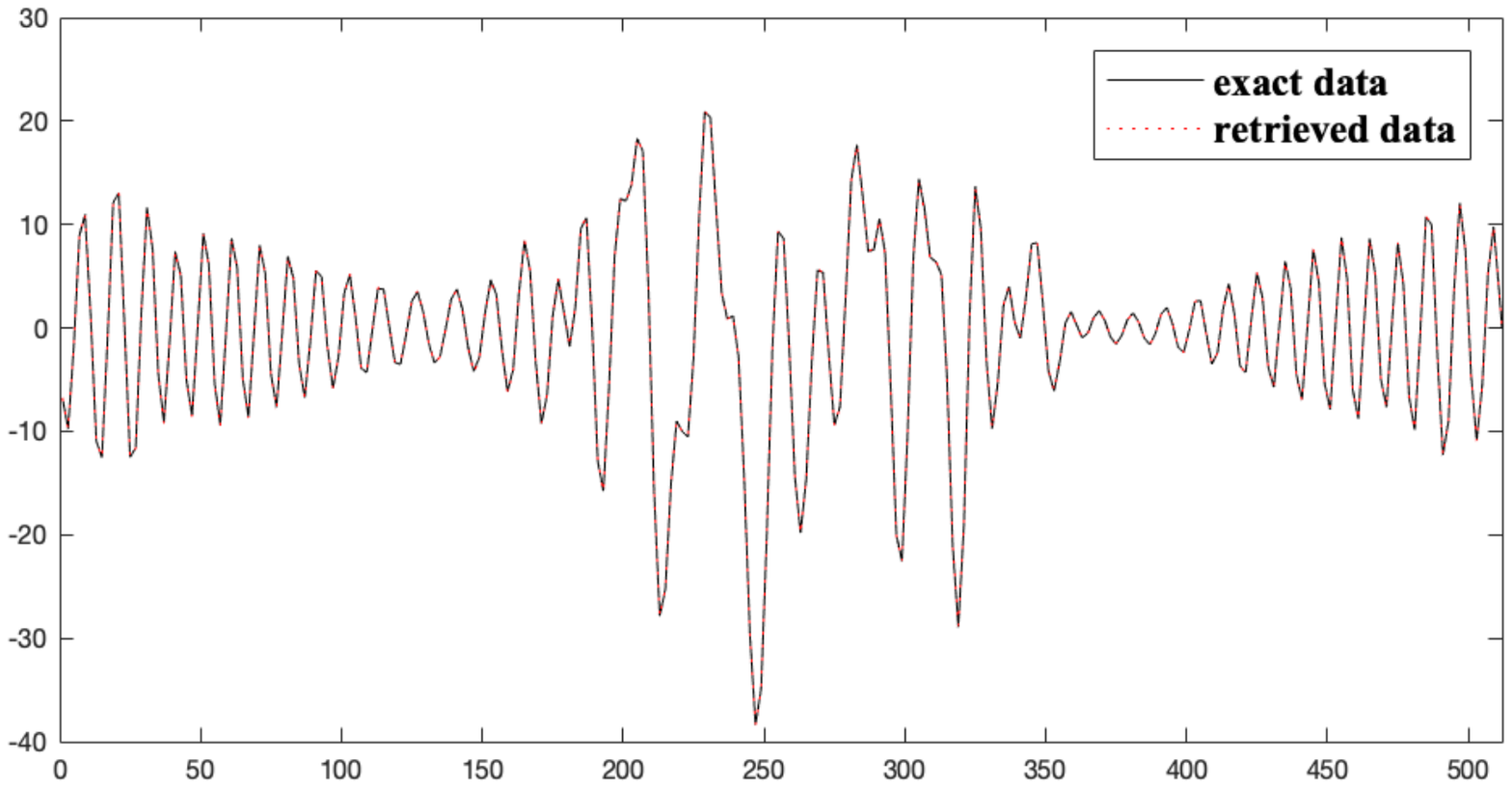}}
  \subfigure[\textbf{${\bf z_0}=(12,12)^T$.}]{
    \includegraphics[width=1.5in,height=0.99in]{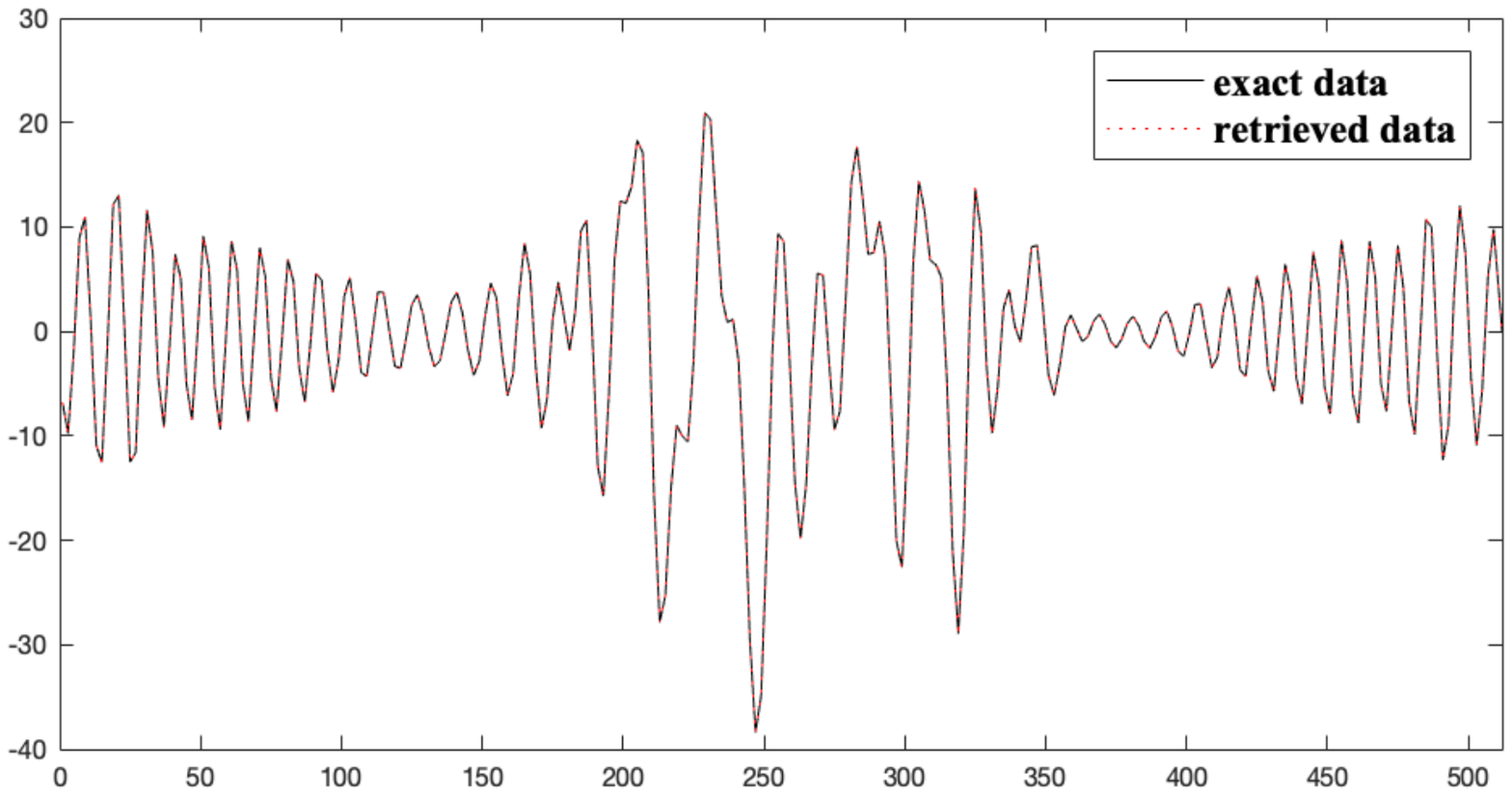}}
\caption{{\bf Example PhaseRetrieval.}\, Phase retrieval for the real part of the far field pattern without error using different source points at a fixed direction $\mathbf{d}=(1,0)^T$.}
\label{zero}
\end{figure}

\begin{figure}[htbp]
  \centering
  \subfigure[\textbf{$0$  noise.}]{
    \includegraphics[width=1.5in,height=1in]{pics/PRerror24noise0.pdf}}
  \subfigure[\textbf{$10\%$ noise.}]{
    \includegraphics[width=1.5in,height=0.99in]{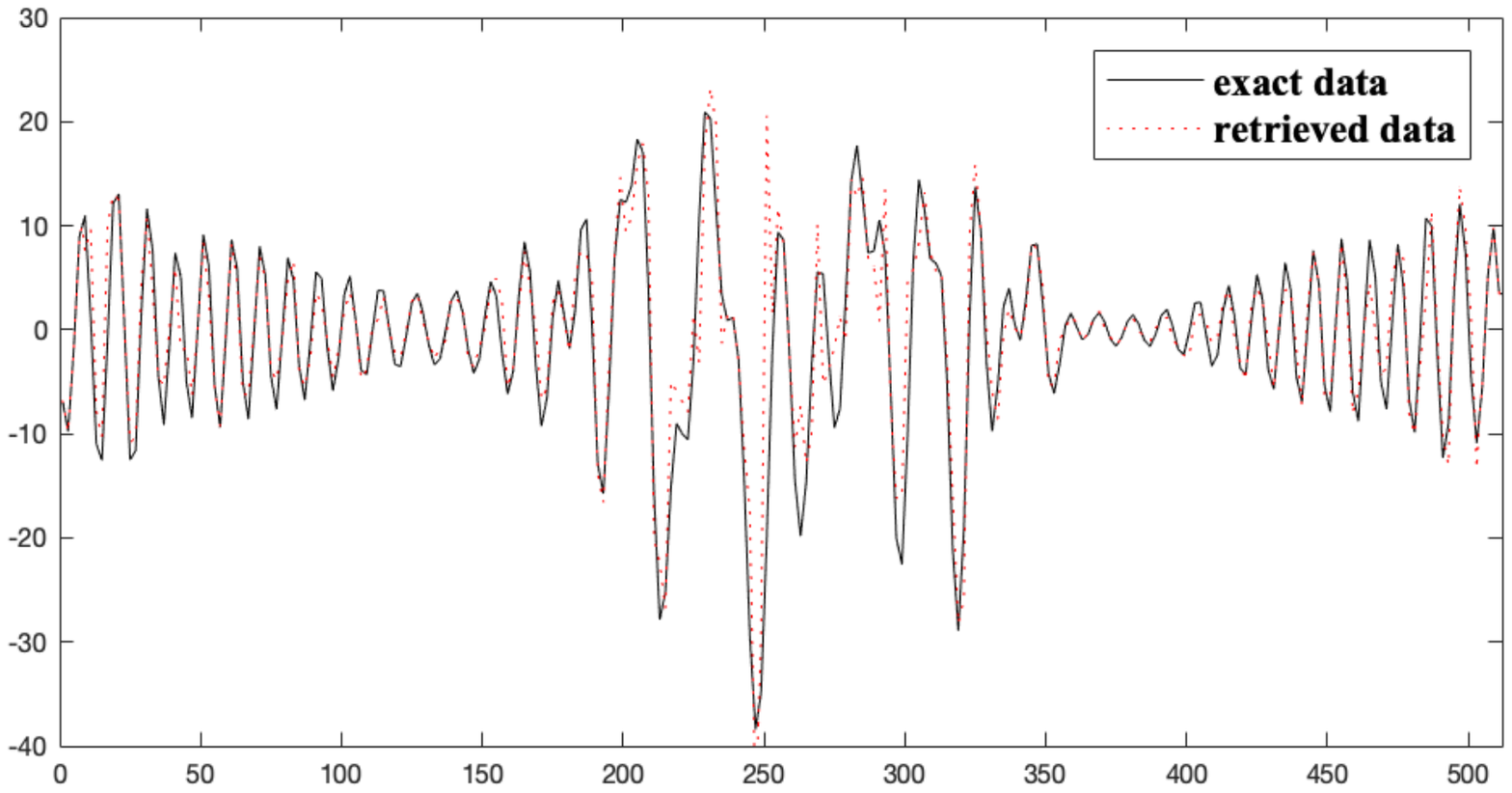}}
  \subfigure[\textbf{$30\%$ noise.}]{
    \includegraphics[width=1.5in,height=0.99in]{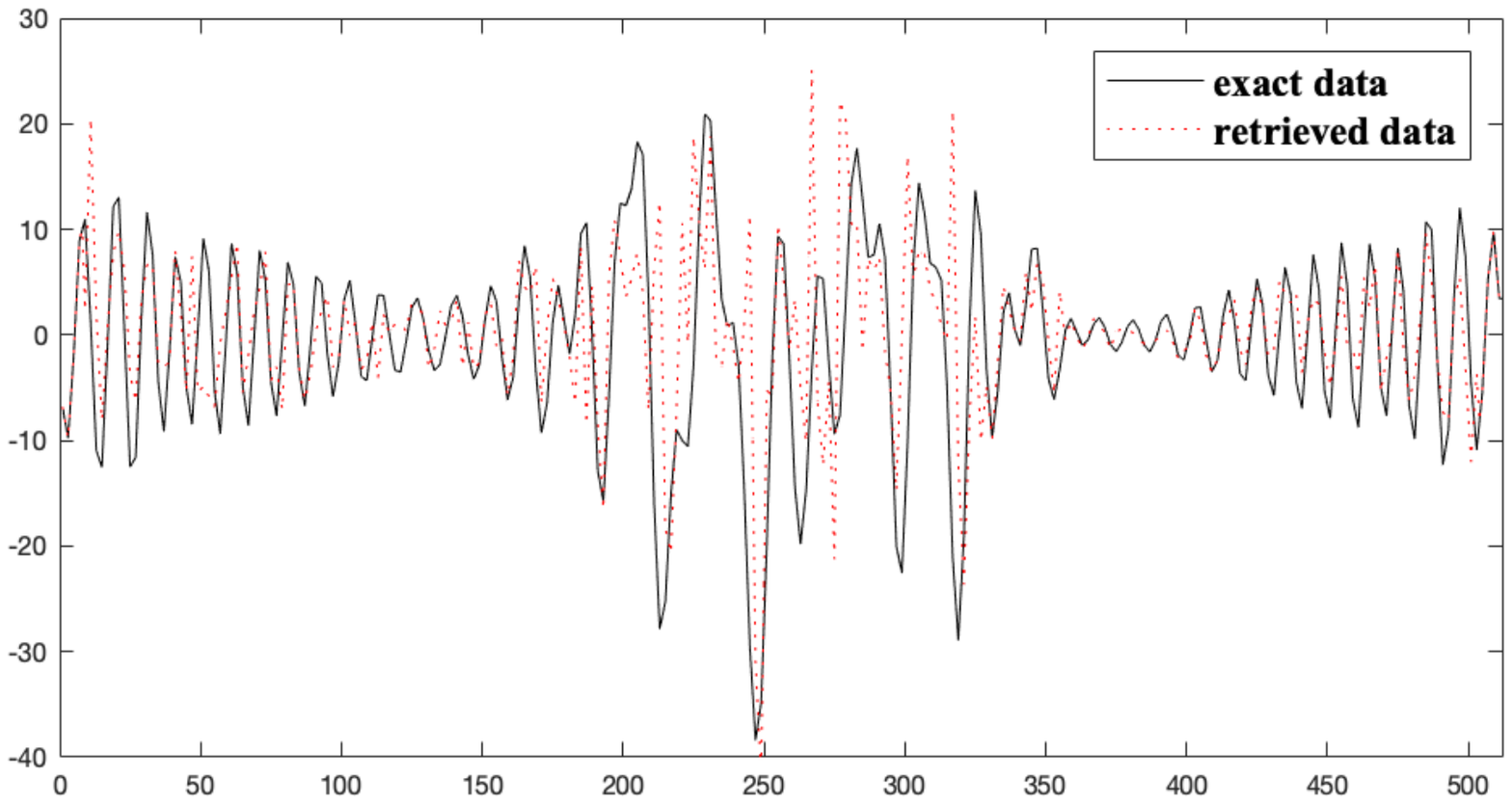}}
\caption{{\bf Example PhaseRetrieval.}\, Phase retrieval for the real part of the far field pattern with different relative error and ${\bf z_0}=(2,4)^T$ at a fixed direction $\mathbf{d}=(1,0)^T$.}
\label{relative}
\end{figure}

\begin{figure}[htbp]
  \centering
  \subfigure[\textbf{$0$ noise.}]{
    \includegraphics[width=1.5in,height=1in]{pics/PRerror24noise0.pdf}}
  \subfigure[\textbf{$0.1$ noise.}]{
    \includegraphics[width=1.5in,height=0.99in]{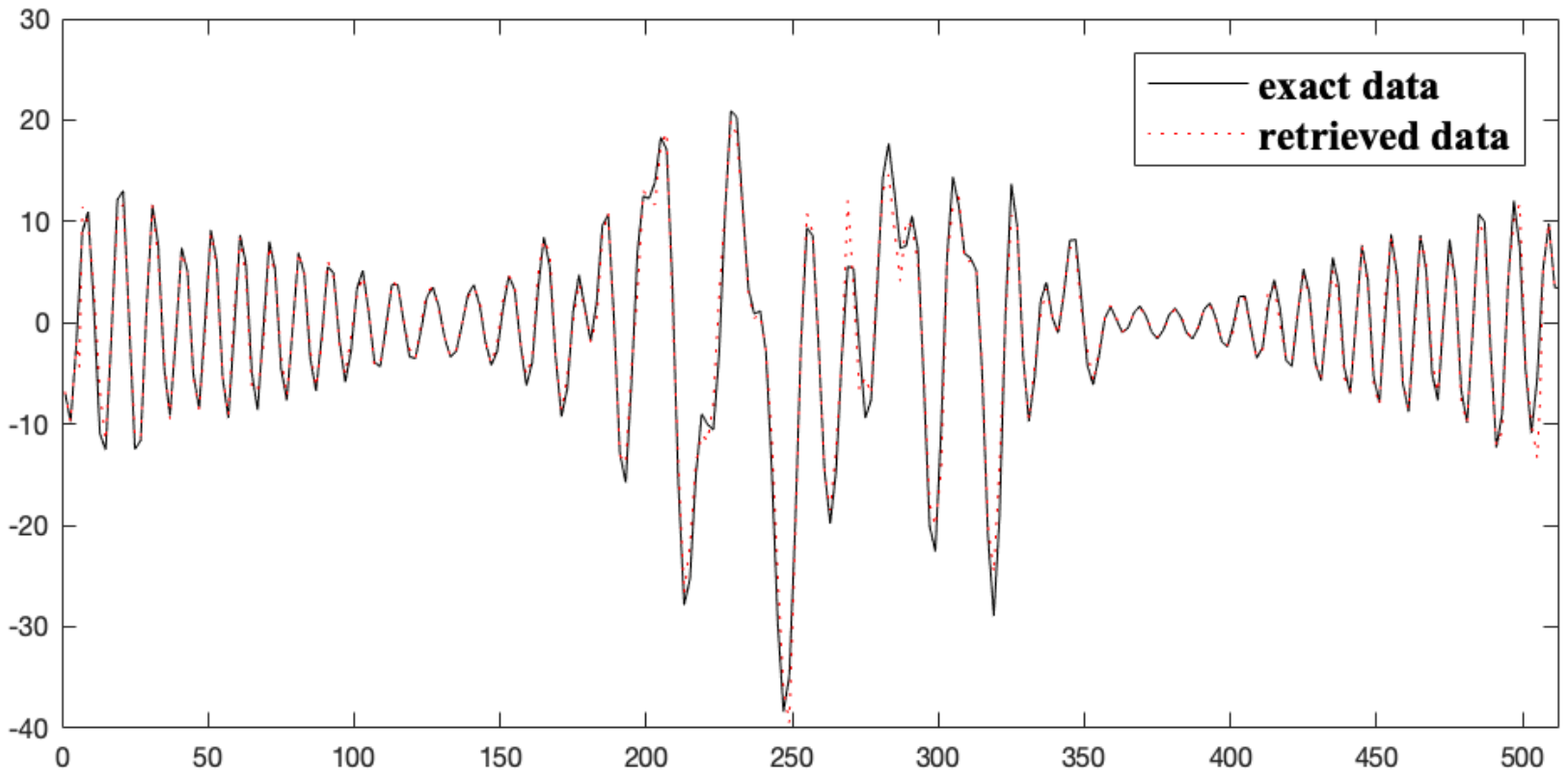}}
  \subfigure[\textbf{$0.3$ noise.}]{
    \includegraphics[width=1.5in,height=0.99in]{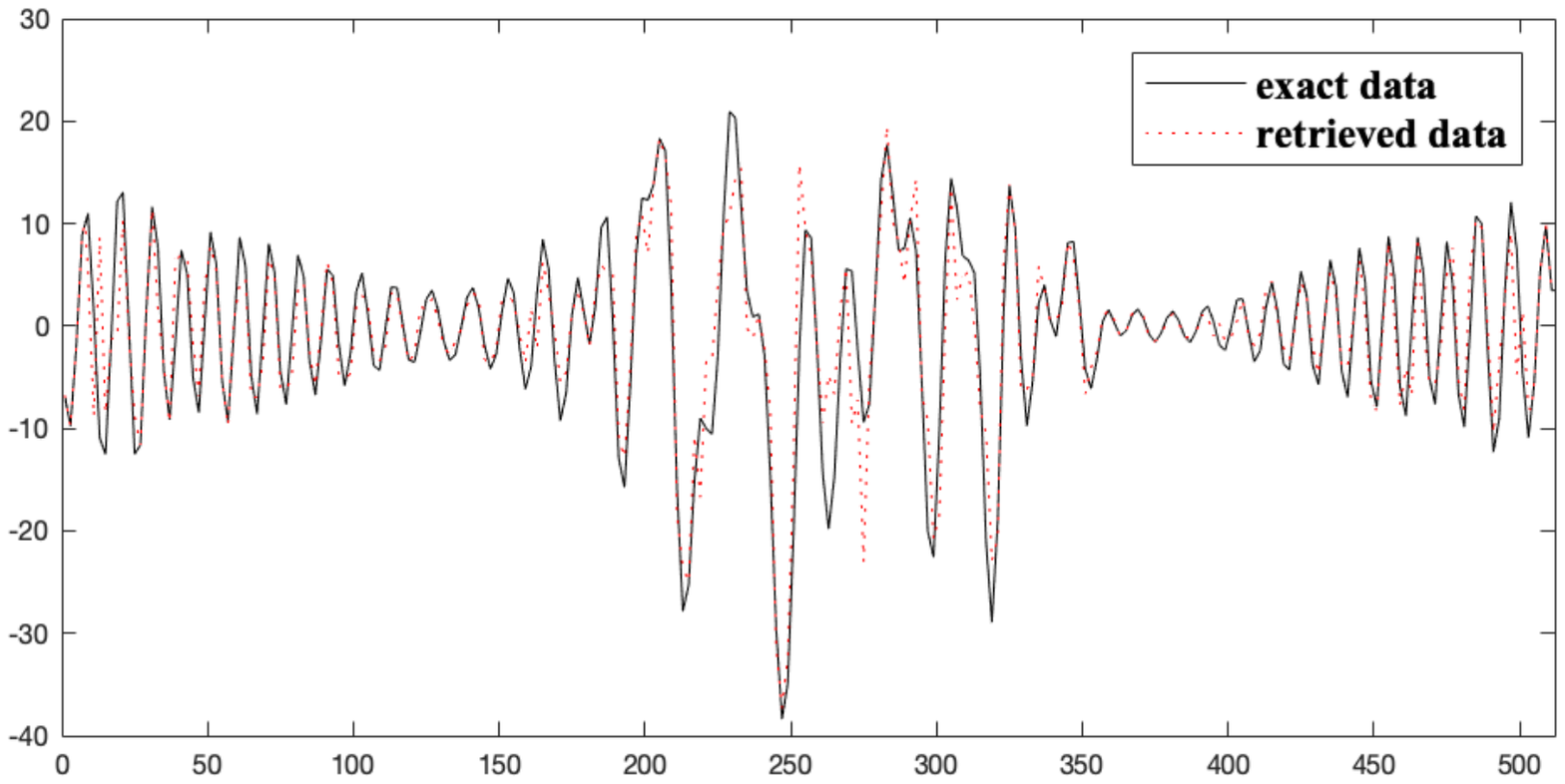}}
\caption{{\bf Example PhaseRetrieval.}\, Phase retrieval for the real part of the far field pattern with different absolute error and ${\bf z_0}=(2,4)^T$ at a fixed direction $\mathbf{d}=(1,0)^T$.}
\label{absolute}
\end{figure}

\textbf{Example {\bf $\bf I_{2}-$Big$\bf +I_{3}-$Small}}.
In Figures \ref{PRexample}, the scatterers are the same as the \textbf{Example ${\bf I_{z_0}(p)}$-Big} and \textbf{Example ${\bf I_{z_0}(p,d)}$-Small}.
We choose the same source point ${\bf z_0}=(2,4)^T$, no false domain appears in the reconstructions. Figures \ref{PRexample}(a) uses all the incident directions, while
\ref{PRexample}(b) uses one incident direction $\mathbf{d}=(1,0)^T$.

\begin{figure}[htbp]
  \centering
  \subfigure[\textbf{Kite.}]{
    \includegraphics[width=1.8in,height=1.6in]{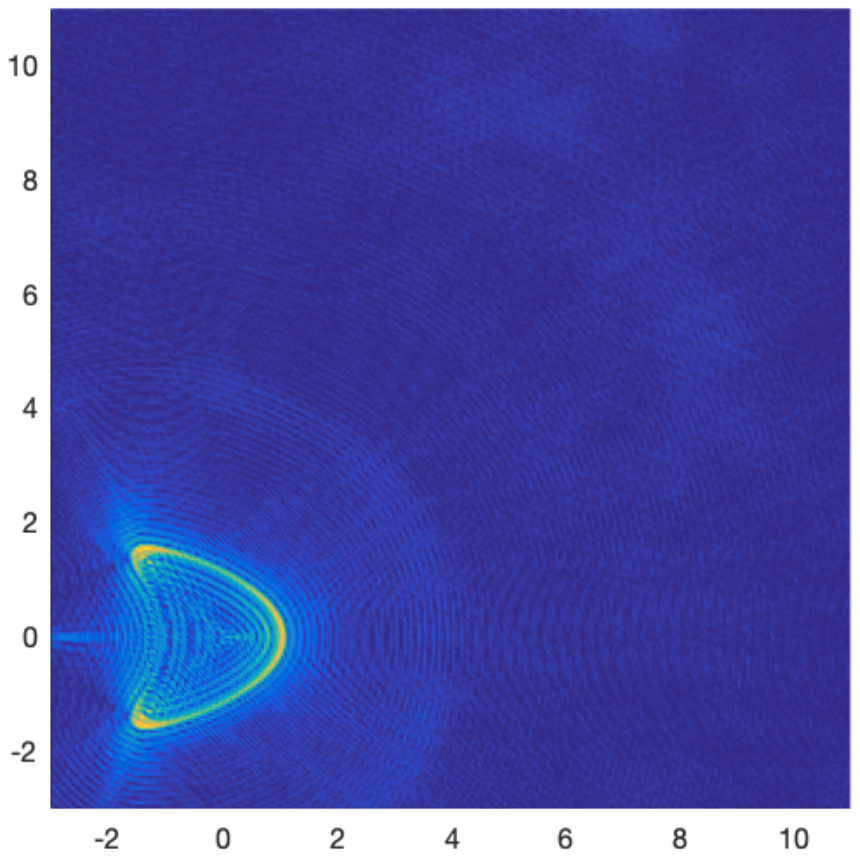}}
  \subfigure[\textbf{Two circles.}]{
    \includegraphics[width=2.2in,height=1.6in]{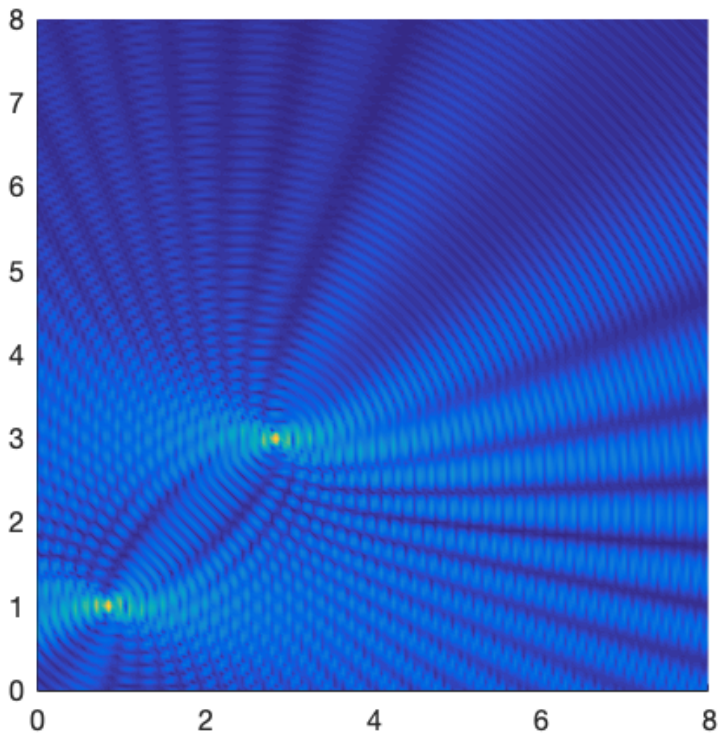}}
\caption{{\bf $\bf I_{2}-$Big$\bf +I_{3}-$Small}\, Reconstruction of different shapes.}
\label{PRexample}
\end{figure}

\subsection{Phaseless inverse source problems}
The forward problems are computed the same as in \cite{AlaHuLiuSun}. In all examples, for ${\bf\hx} \in \Theta$, we consider multiple frequency far field
data $u^\infty_{\mathbf{F},s}({\bf\hx},k_j),\quad j=1,\cdots,N,$ where $N=20,k_{min}=0.5,k_{max}=20$ such that  $k_j=(j-0.5)\Delta k, \Delta k=\frac{k_{max}}{N}$.

Then the phaseless data are stored in the matrices $M_{\bf F,z_0}=\Big(|u_{{\bf F\cup\{z_0\}},s}^{\infty}({\bf\hx}, k_j,\tau)|\Big), \hx\in\Theta, j=1,\cdots,N$.
We further perturb $M_{\bf F, z_0}$ by random relative error and absolute error as before.  Three shapes are considered: a rectangle given by $(1, 2) \times(1,1.6)$,
a L-shaped domain given by  $(0,2) \times(0,2) \setminus (1/16, 2) \times(1/16,2)$ and an equilateral triangle with vertices $(-2,0), (1,0),(-1/2,3/2\sqrt{3})$.
Here, we use
\ben
{\bf F=(x^2+y^2+5, x^2-y^2+5)}^T.
\enn

 \textbf{${\bf I^{\Theta}_{z_0,S}}$ with one and two observation directions} We first consider the case of one observation using different $\mathbf{z_0}$. The support of
$\bf F$ is the rectangle. In Fig. \ref{ObservationoneS}, we plot the indicators using $\mathbf{\hx}= (0,1)^T$ and three source points $\mathbf{z_0} = (4,1.3)^T, (4,4)^T$ and $(12,12)^T$. The picture clearly shows that the source and its the point symmetric domain (with respect to $\bf z_0$) lies in a strip, which is perpendicular to the observation direction.
Next we consider two observation directions $(1,0)^T$ and $(0,1)^T$, we plot the indicators in Fig. \ref{ObservationtwoS}.  Since the observation directions are perpendicular to each other, the strips are perpendicular to each other too.\\

\begin{figure}[htbp]
  \centering
  \subfigure[\textbf{${\bf z_0}=(4,1.3)^T$.}]{
    \includegraphics[width=1.5in,height=1.5in]{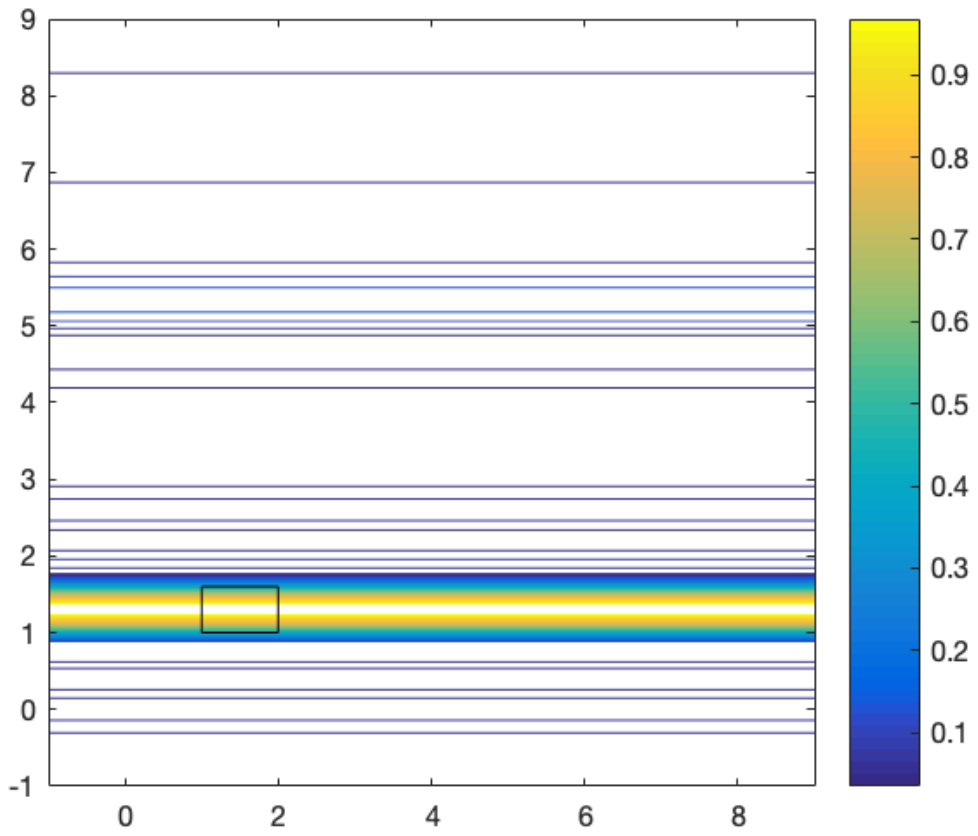}}
  \subfigure[\textbf{${\bf z_0}=(4,4)^T$.}]{
    \includegraphics[width=1.5in,height=1.5in]{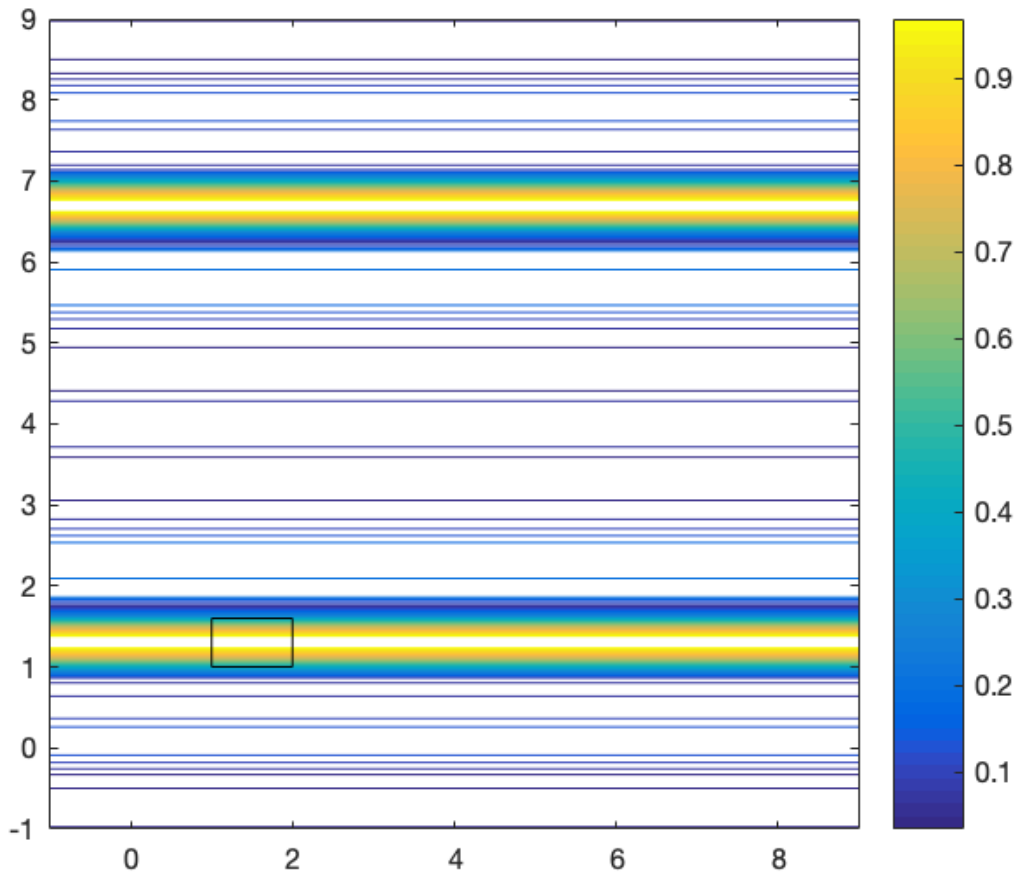}}
  \subfigure[\textbf{${\bf z_0}=(12,12)^T$.}]{
    \includegraphics[width=1.5in,height=1.5in]{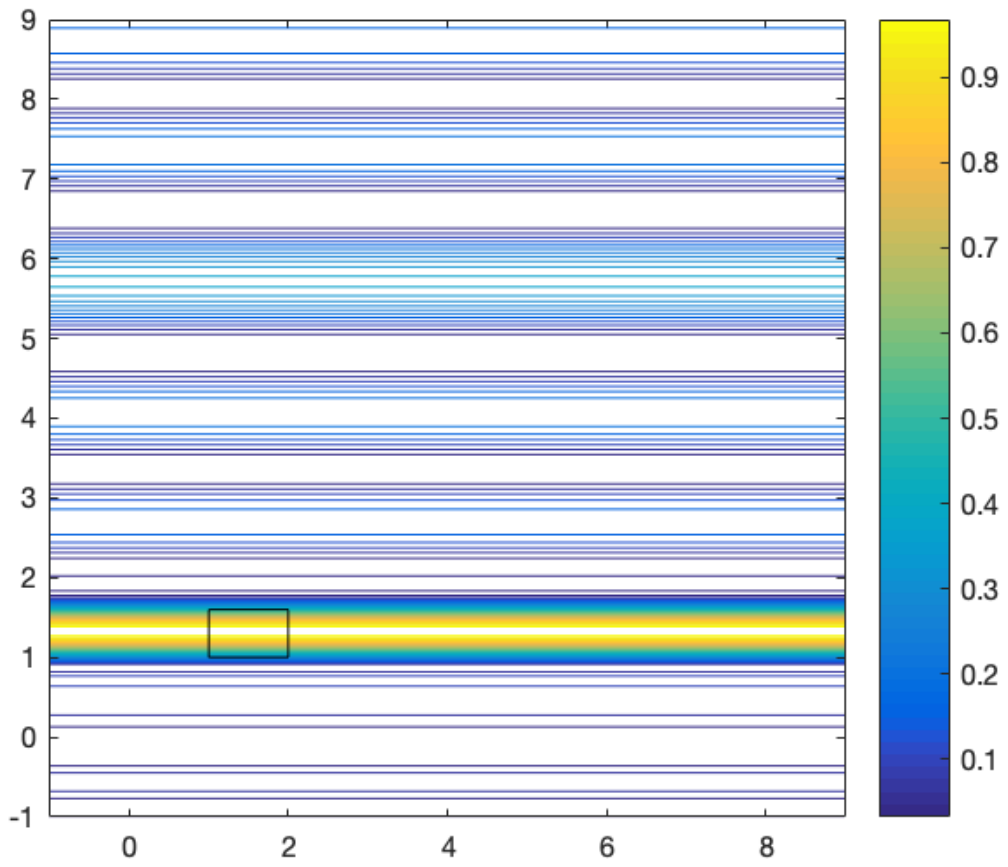}}
\caption{${\bf I^{\Theta}_{z_0,S}}$ with one observation direction for rectangle.}
\label{ObservationoneS}
\end{figure}

\begin{figure}[htbp]
  \centering
  \subfigure[\textbf{${\bf z_0}=(4,1.3)^T$.}]{
    \includegraphics[width=1.5in,height=1.5in]{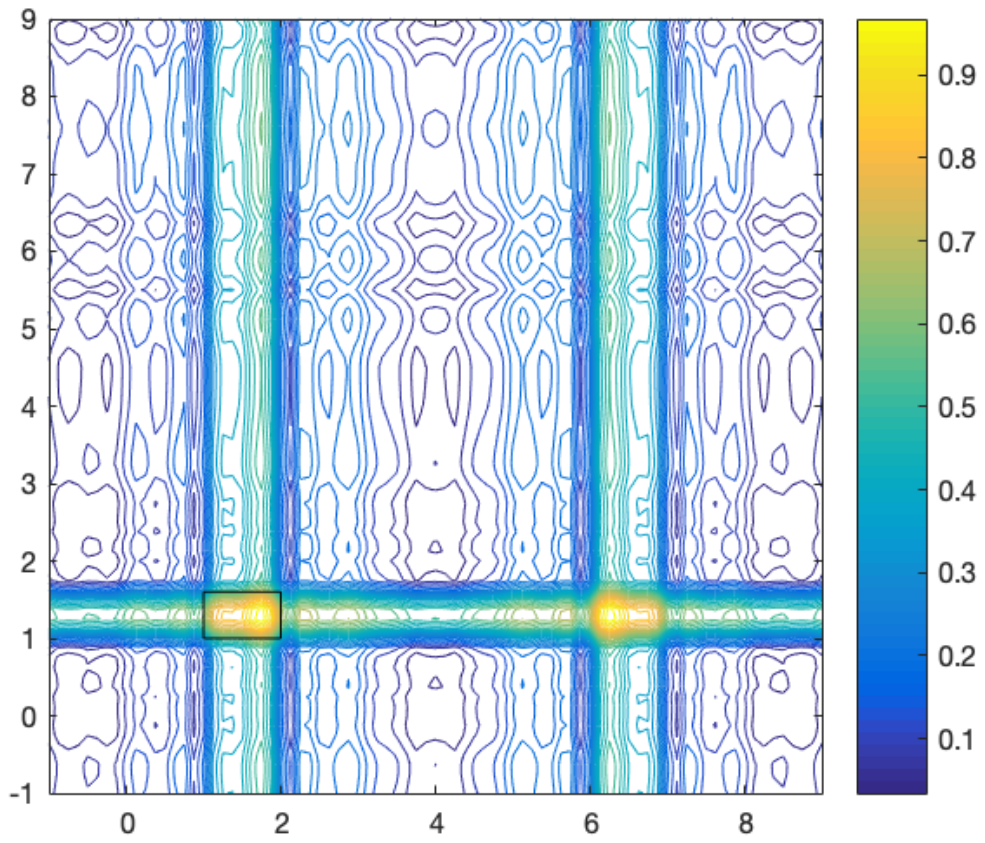}}
  \subfigure[\textbf{${\bf z_0}=(4,4)^T$.}]{
    \includegraphics[width=1.6in,height=1.5in]{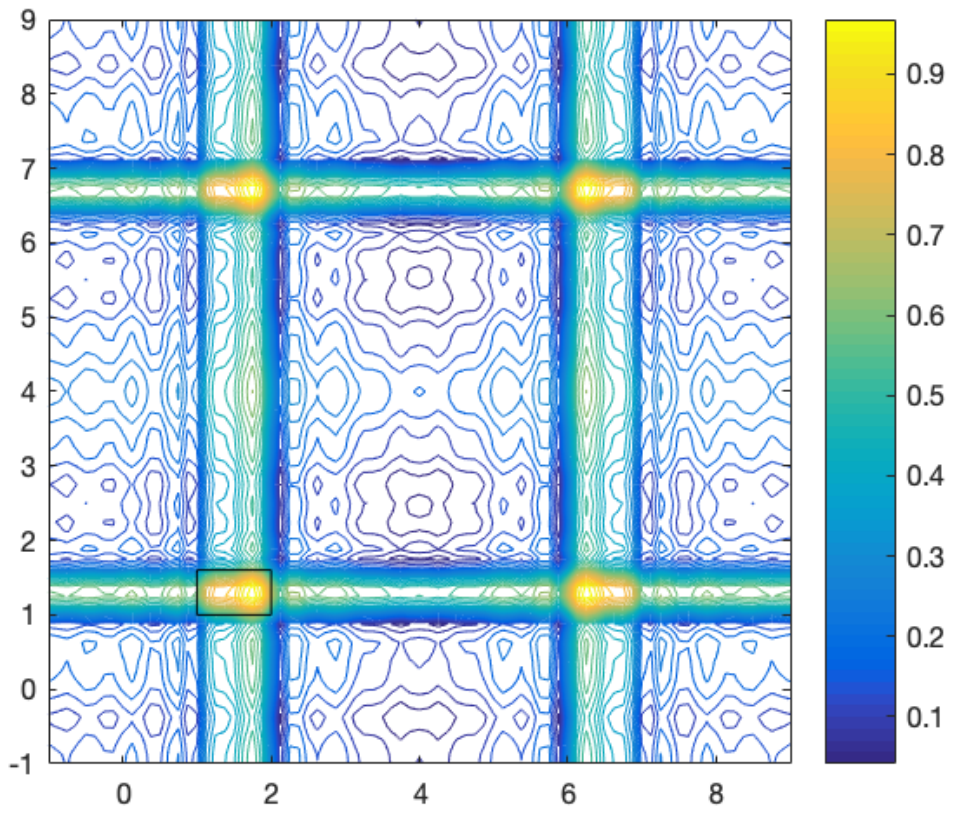}}
  \subfigure[\textbf{${\bf z_0}=(12,12)^T$.}]{
    \includegraphics[width=1.5in,height=1.5in]{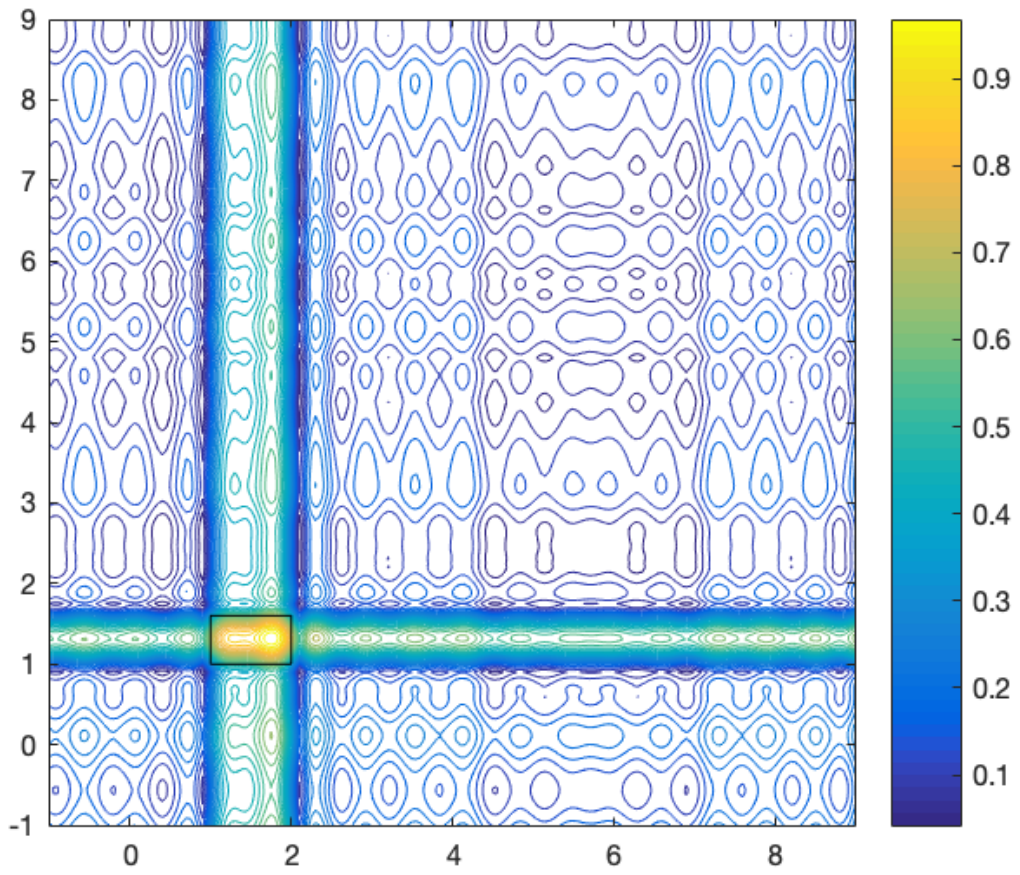}}
\caption{${\bf I^{\Theta}_{z_0,S}}$ with two observation directions for rectangle.}
\label{ObservationtwoS}
\end{figure}

\textbf{${\bf I^{\Theta}_{z_0,S}}$ with multiple observation directions}
We use the rectangle and the L-shaped domain in this example.
Now we use 20 observation angles $\theta_j, j=1,\cdots, 20$ such that  $\theta_j=-\pi/2+j\pi/20$. Note that $\theta_j\in(-\pi/2,\pi/2)$. Fig. \ref{ObservationmulS} gives the results for
rectangle with different ${\bf z_0}$. Fig. \ref{ObservationmulLS} gives the results for
the L-shaped domain. The locations and sizes
of support of ${\bf F}$ are reconstructed correctly.\\

\begin{figure}[htbp]
  \centering
  \subfigure[\textbf{${\bf z_0}=(4,1.3)^T$.}]{
    \includegraphics[width=1.5in,height=1.5in]{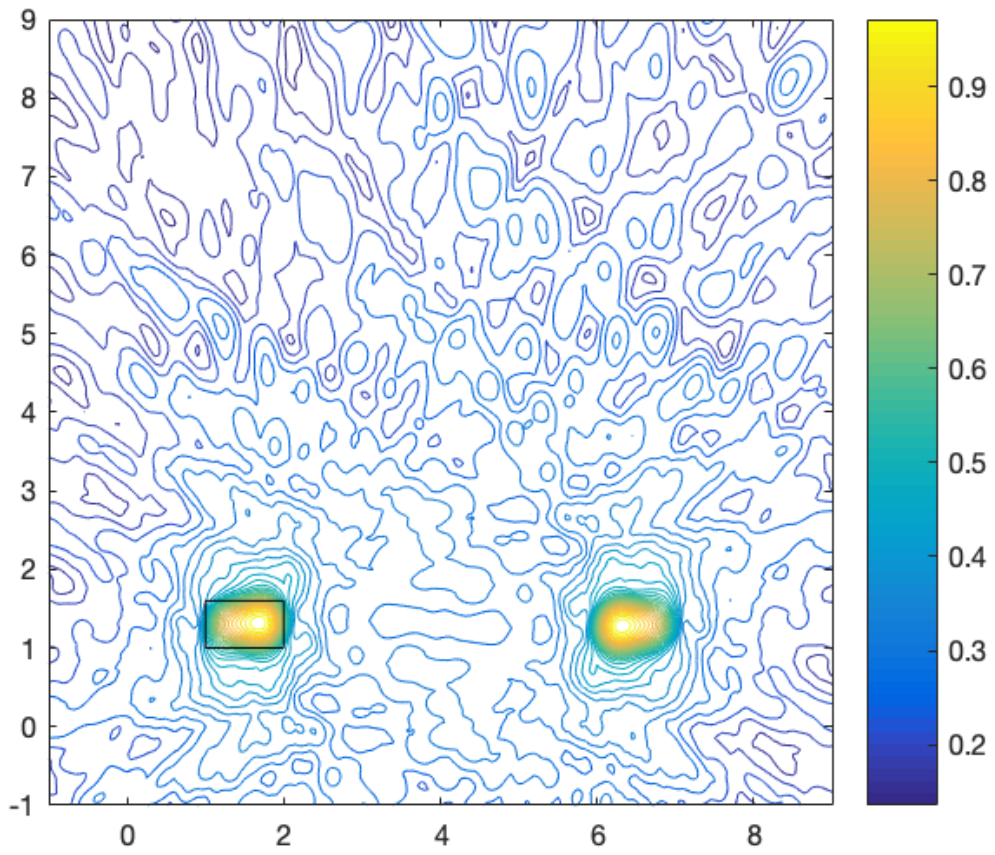}}
  \subfigure[\textbf{${\bf z_0}=(4,4)^T$.}]{
    \includegraphics[width=1.6in,height=1.5in]{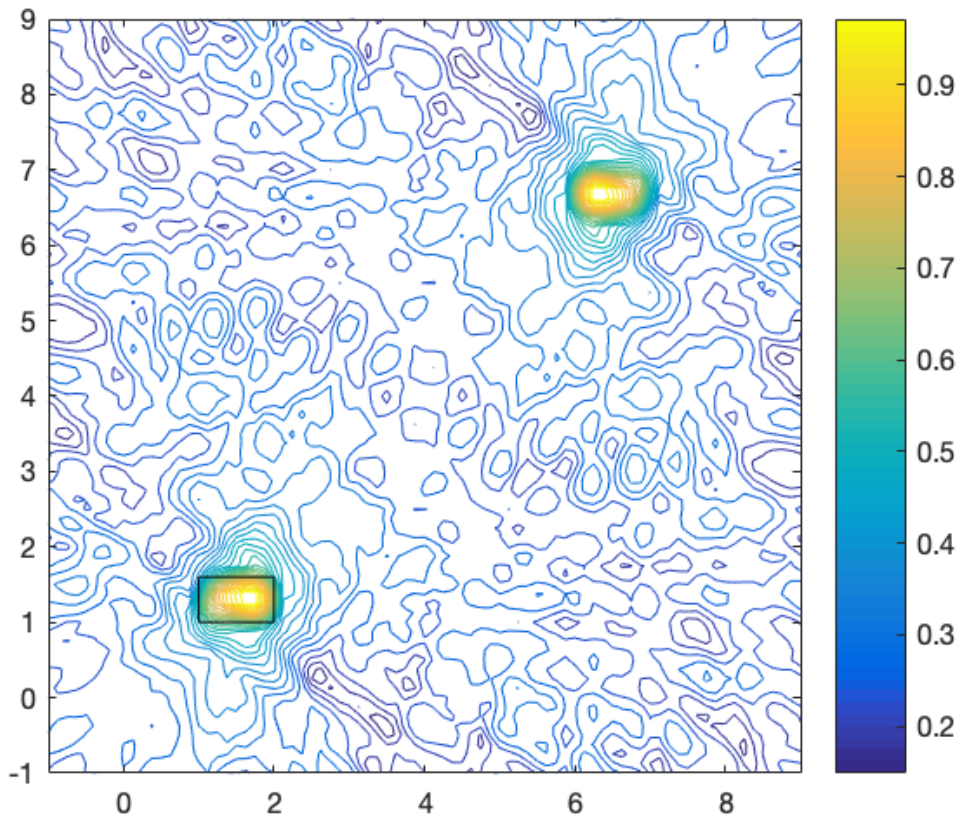}}
  \subfigure[\textbf{${\bf z_0}=(12,12)^T$.}]{
    \includegraphics[width=1.5in,height=1.5in]{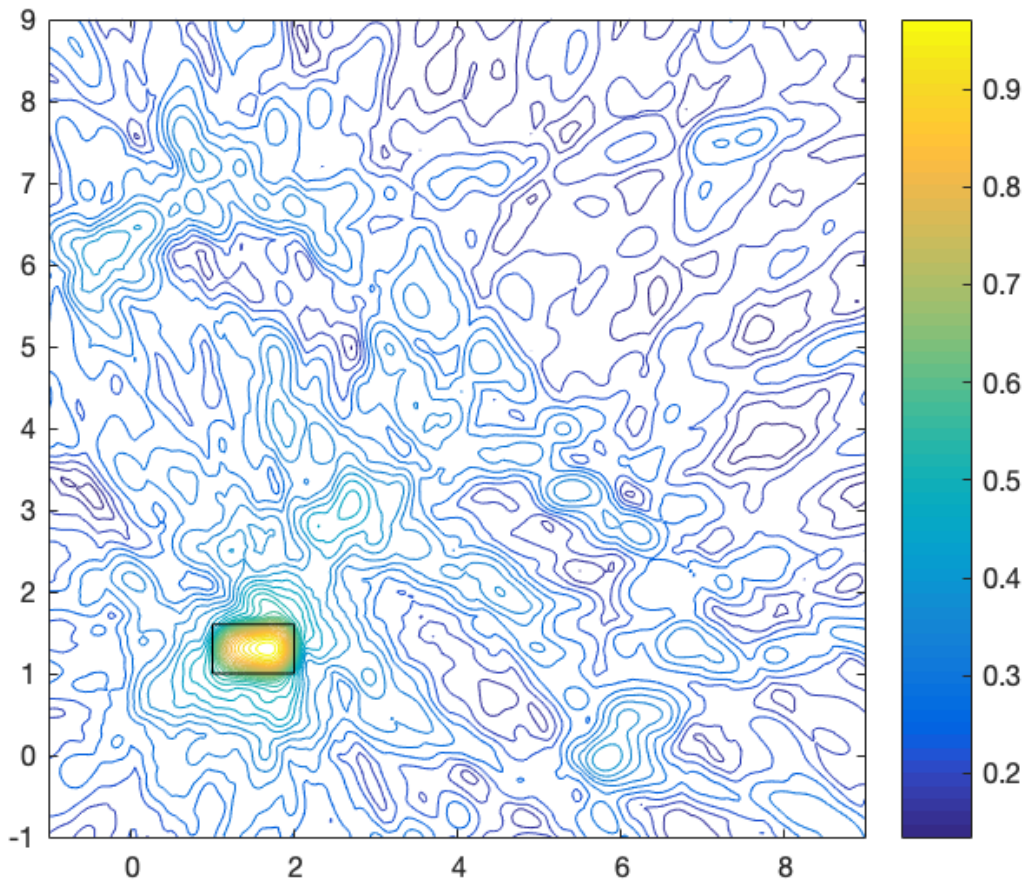}}
\caption{${\bf I^{\Theta}_{z_0,S}}$ with multiple observation directions for rectangle.}
\label{ObservationmulS}
\end{figure}

\begin{figure}[htbp]
  \centering
  \subfigure[\textbf{${\bf z_0}=(3,31/32)^T$.}]{
    \includegraphics[width=1.5in,height=1.5in]{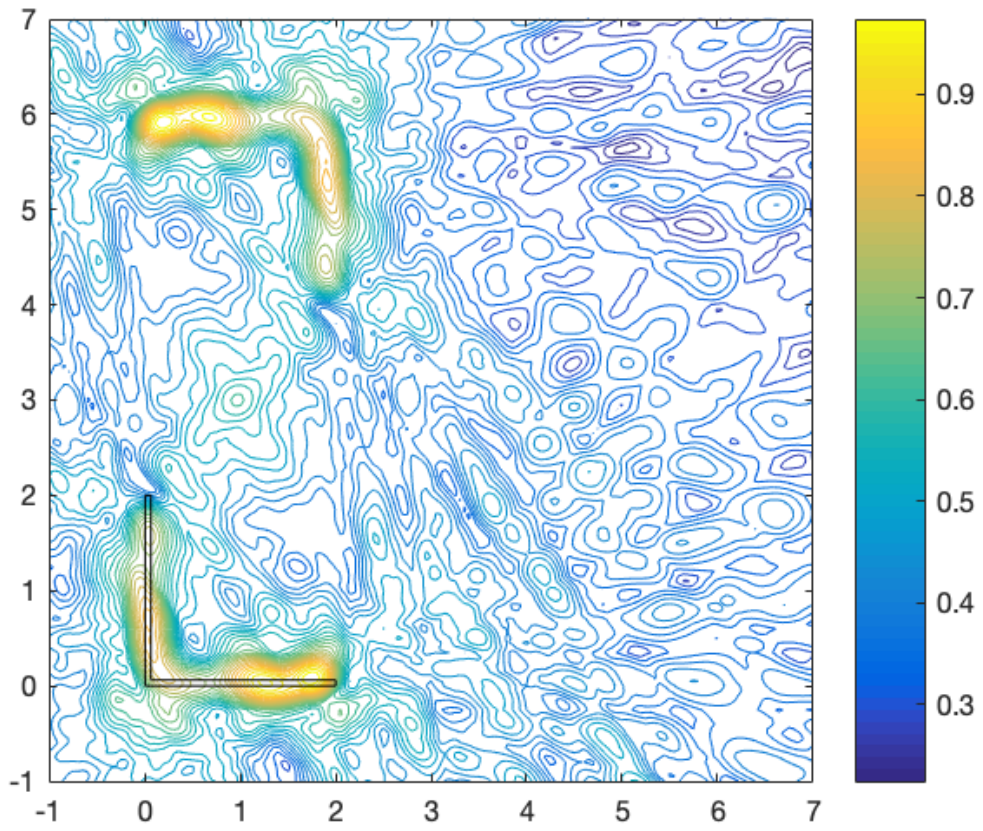}}
  \subfigure[\textbf{${\bf z_0}=(3,3)^T$.}]{
    \includegraphics[width=1.6in,height=1.5in]{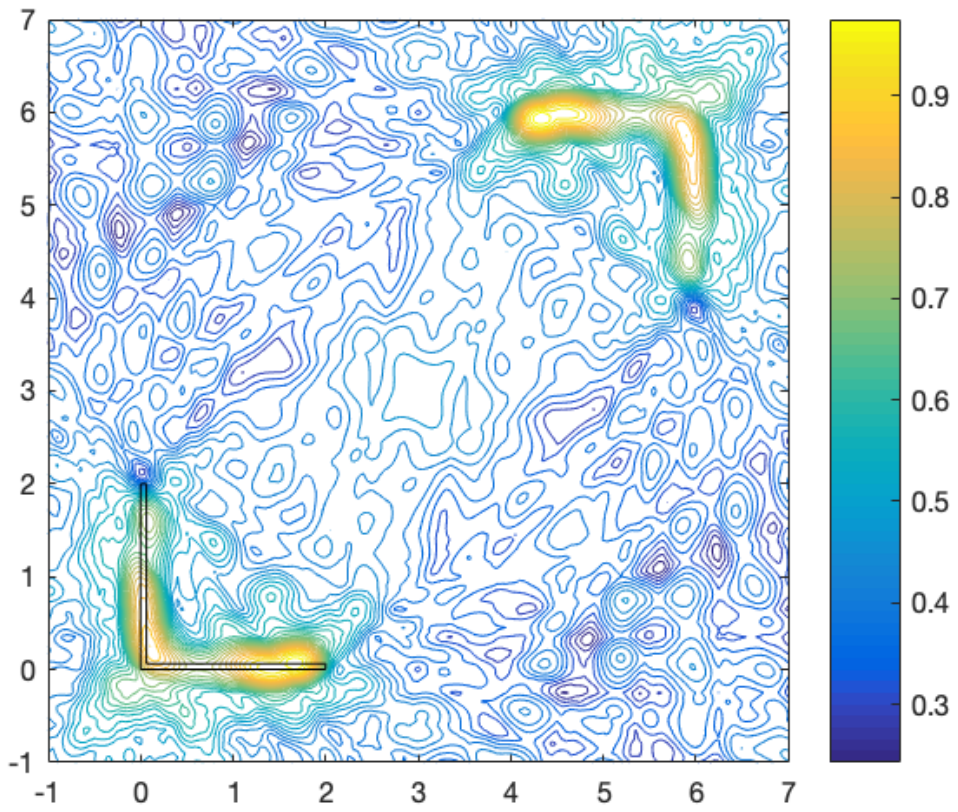}}
  \subfigure[\textbf{${\bf z_0}=(12,12)^T$.}]{
    \includegraphics[width=1.5in,height=1.5in]{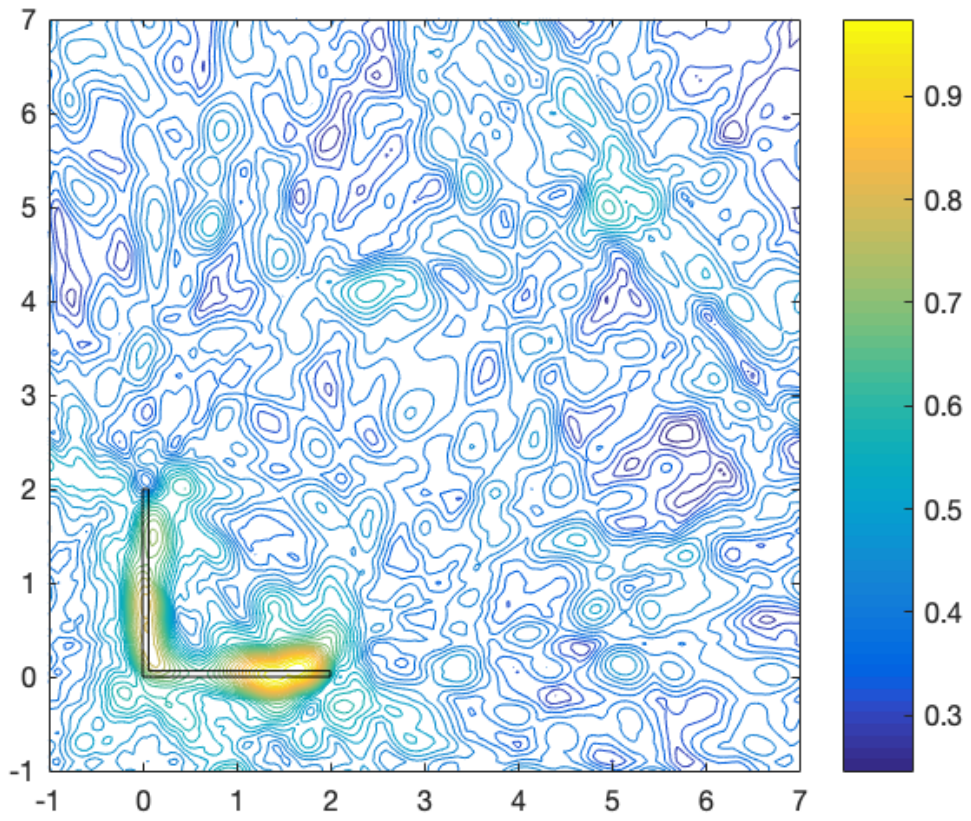}}
\caption{${\bf I^{\Theta}_{z_0,S}}$ with multiple observation directions for L-shaped domain.}
\label{ObservationmulLS}
\end{figure}

\textbf{The validation the phase retrieval scheme}
This example is designed to check the phase retrieval scheme
proposed before. The underlying scatterer is the rectangle.
In Fig. \ref{relativeS} and Fig. \ref{absoluteS} , we compare the phase retrieval data with the exact one, the real part of far field
pattern at a fixed direction $\mathbf{\hx}=(0,1)^T$ is given. We observe that the phaseless data
are reconstructed very well for small relative error level. These two figures also show that our phase
retrieval scheme is robust to noise.\\

\begin{figure}[htbp]
  \centering
  \subfigure[\textbf{$10\%$ noise.}]{
    \includegraphics[width=1.5in,height=1in]{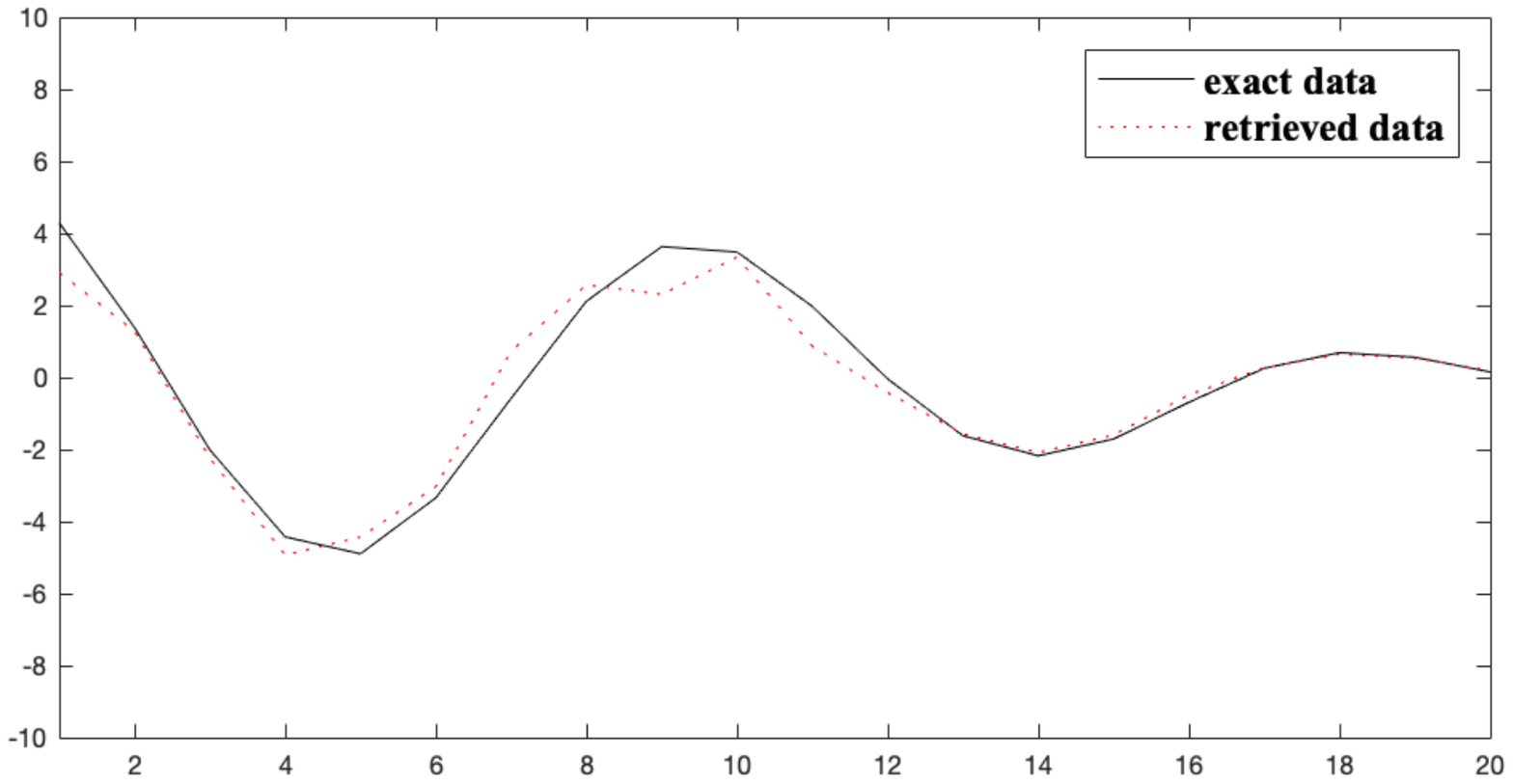}}
  \subfigure[\textbf{$30\%$ noise.}]{
    \includegraphics[width=1.5in,height=0.99in]{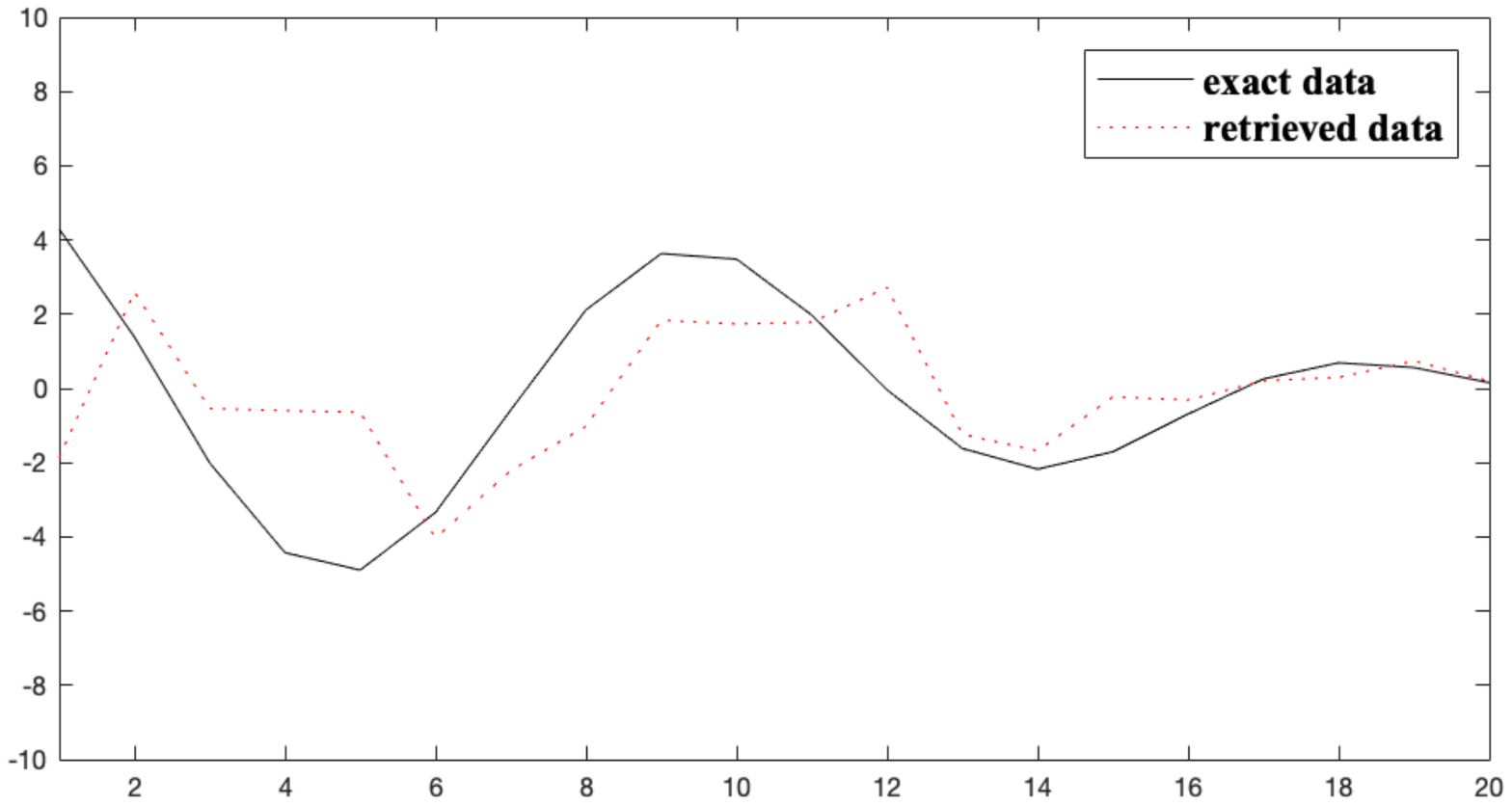}}
  \subfigure[\textbf{$50\%$ noise.}]{
    \includegraphics[width=1.5in,height=0.99in]{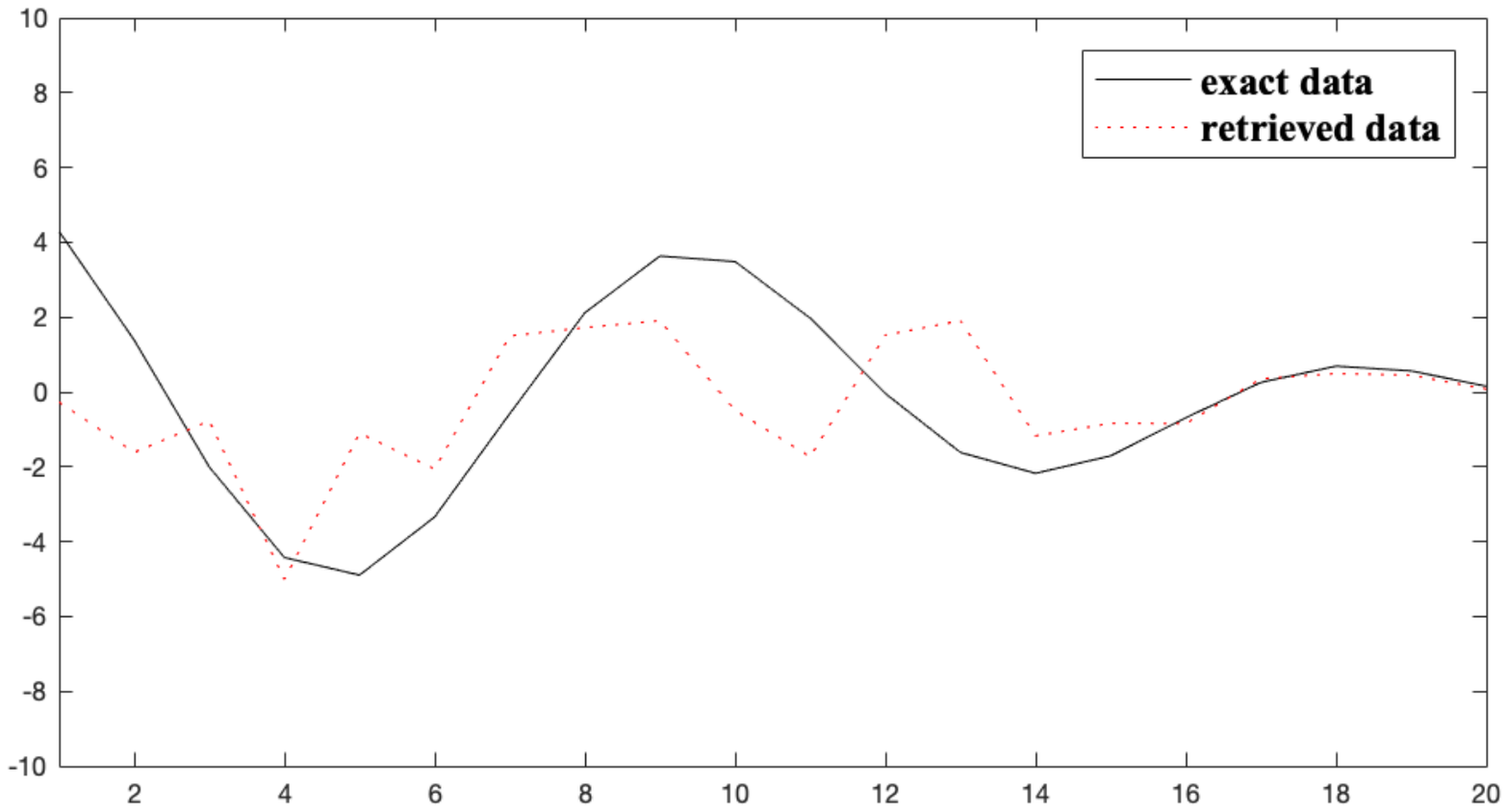}}
\caption{{\bf Example PhaseRetrieval.}\, Phase retrieval for the real part of the far field pattern with relative error at a fixed direction $\mathbf{\hx}=(0,1)^T$.}
\label{relativeS}
\end{figure}

\begin{figure}[htbp]
  \centering
  \subfigure[\textbf{0.1 noise.}]{
    \includegraphics[width=1.5in,height=1in]{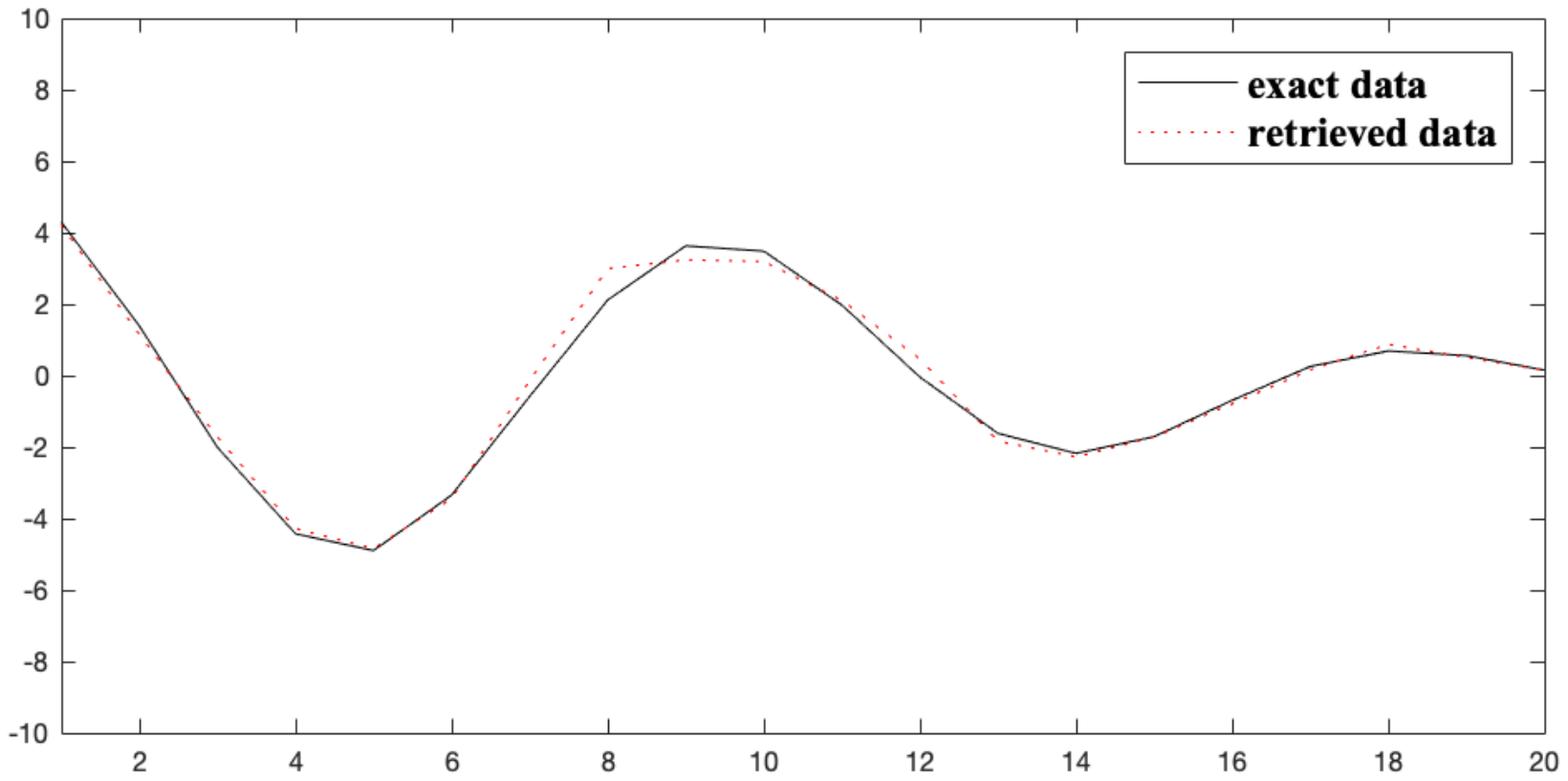}}
  \subfigure[\textbf{0.3 noise.}]{
    \includegraphics[width=1.5in,height=1in]{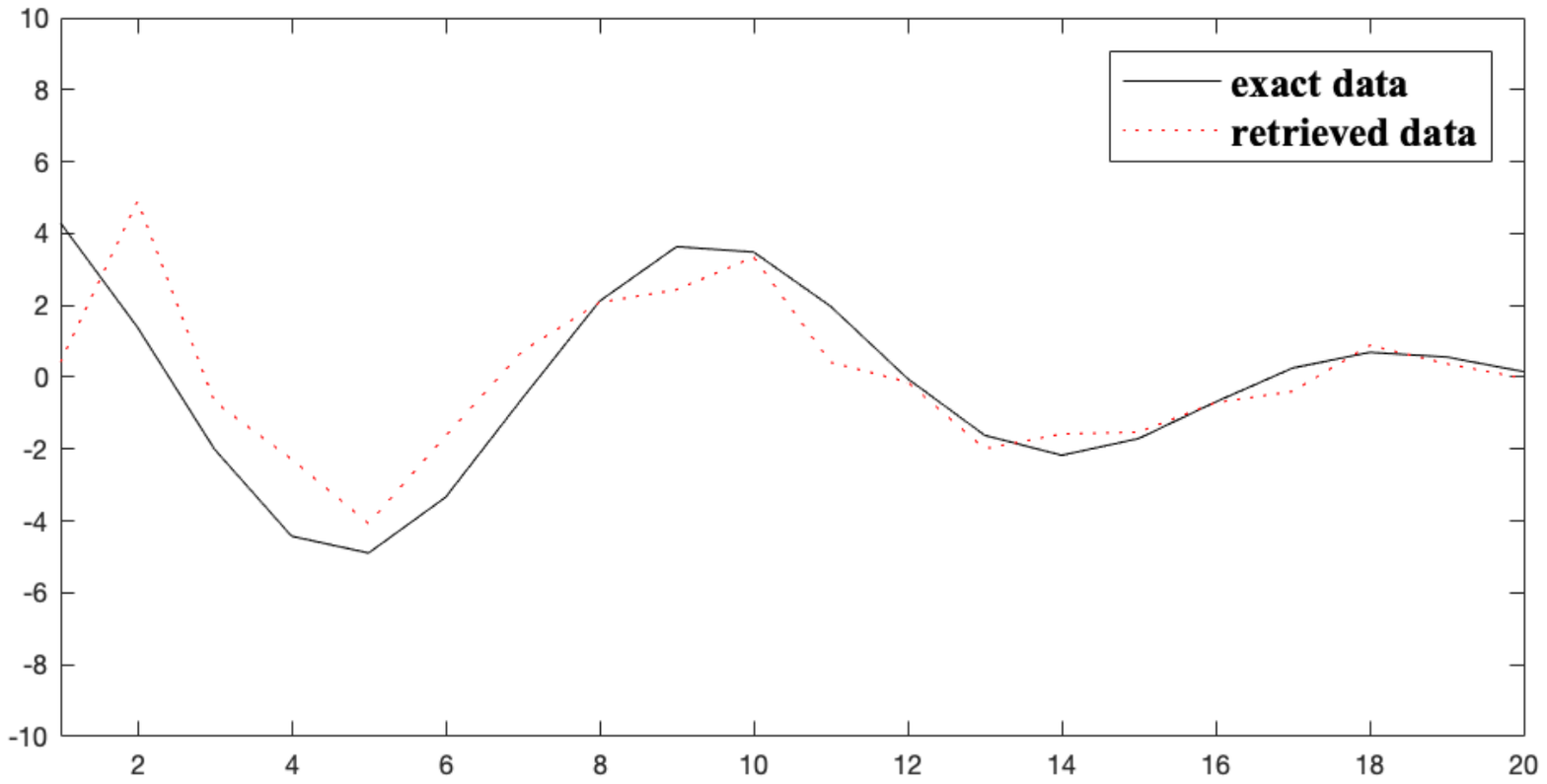}}
  \subfigure[\textbf{0.5 noise.}]{
    \includegraphics[width=1.5in,height=1in]{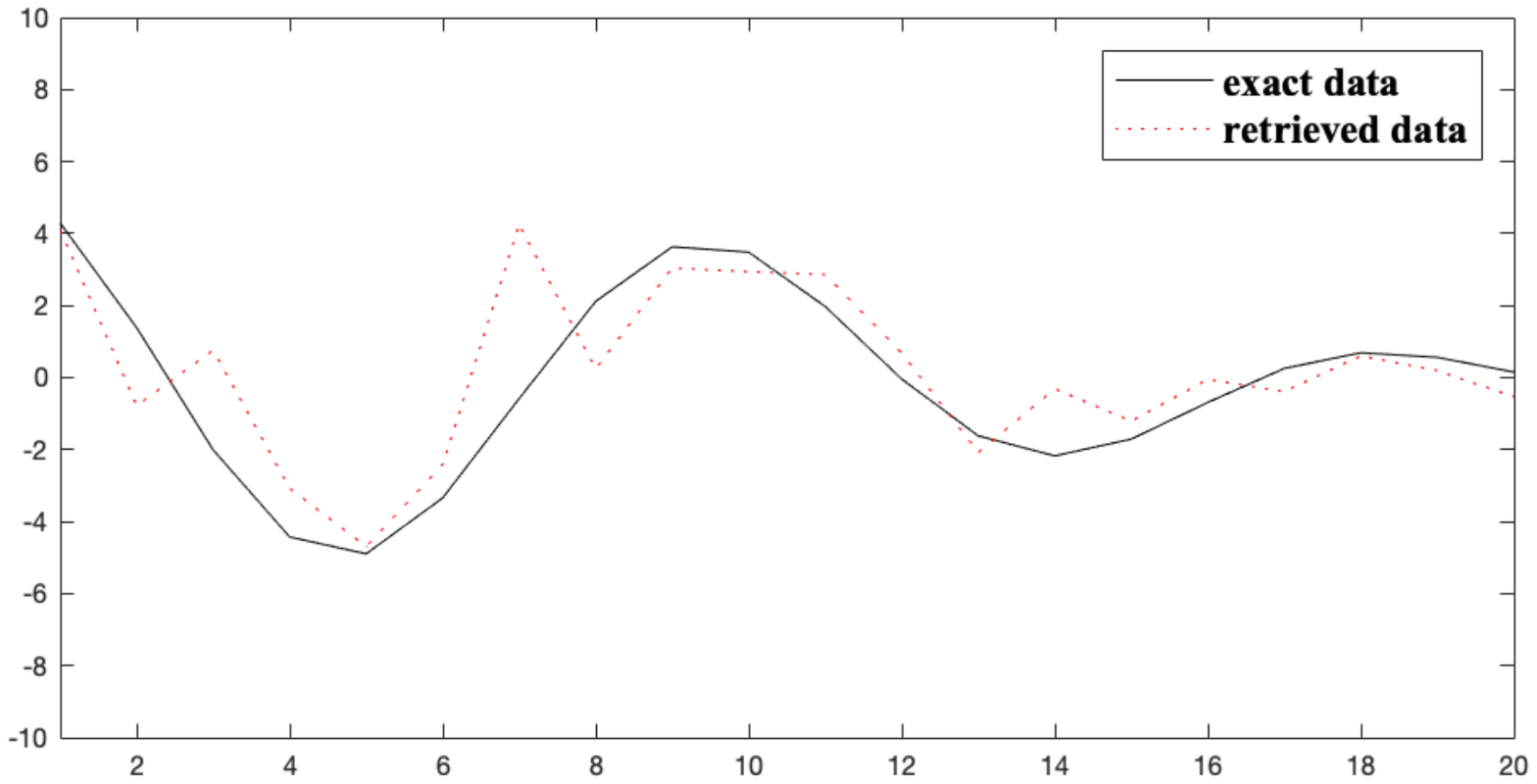}}
\caption{{\bf Example PhaseRetrieval.}\, Phase retrieval for the real part of the far field pattern with absolute error at a fixed direction $\mathbf{\hx}=(0,1)^T$.}
\label{absoluteS}
\end{figure}

\textbf{${\bf I_{\Theta}}(\mathbf p)$ for extended objects }
In this example, we take $\mathbf{z_0}=(4,4)^T$.  We show the reconstructions of extended objects with the same $20$ observation
directions. One is the L-shaped domain, the other is the triangle. Fig. \ref{PRexampleS} give the reconstructions.
\begin{figure}[htbp]
  \centering
  \subfigure[\textbf{L-shaped domain.}]{
    \includegraphics[width=2in,height=1.3in]{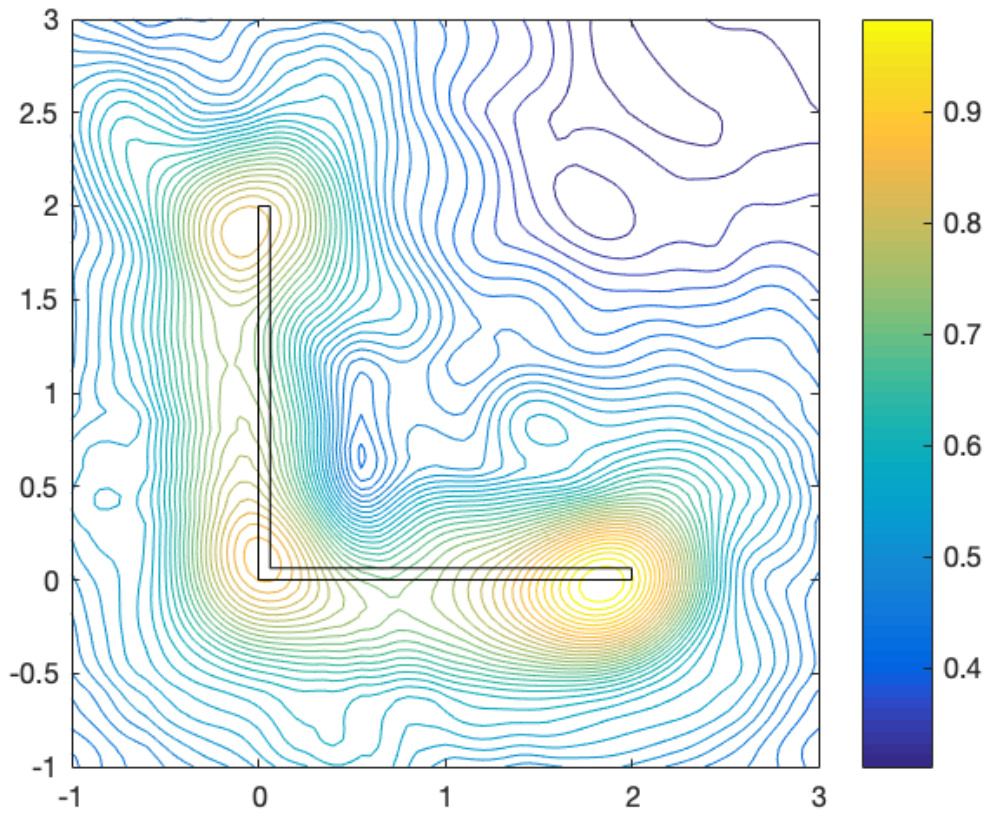}}
  \subfigure[\textbf{Triangle.}]{
    \includegraphics[width=2in,height=1.3in]{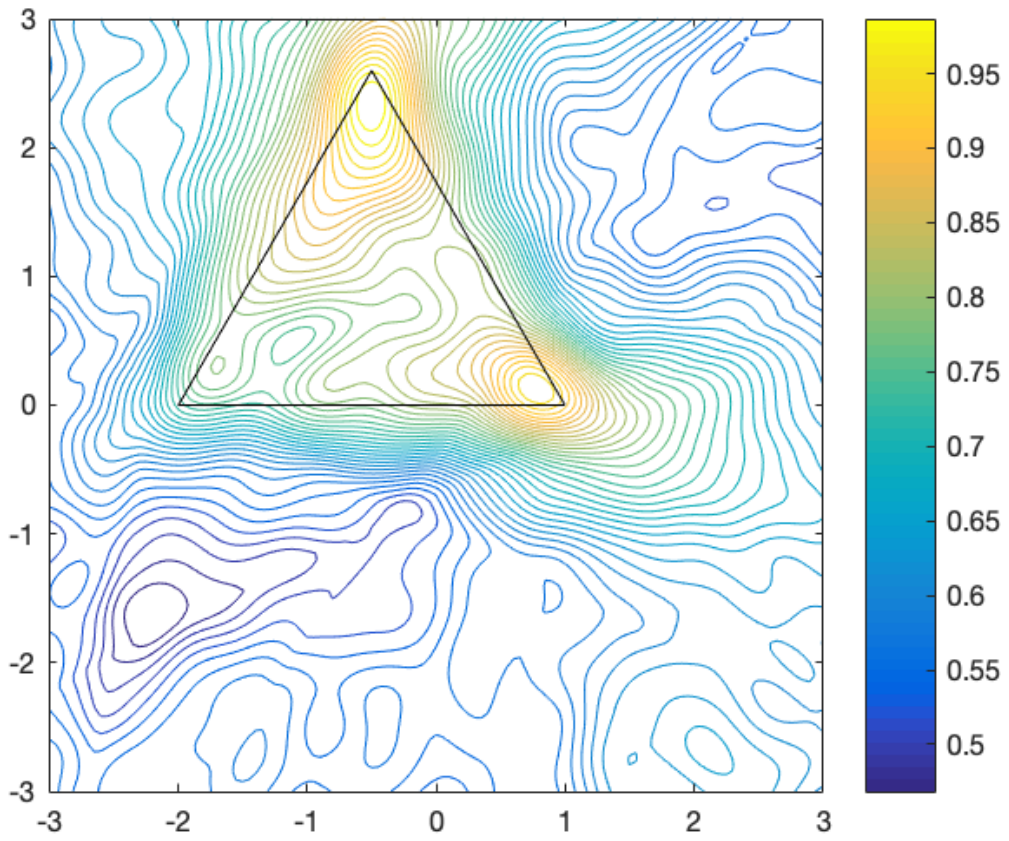}}
\caption{{\bf Example Extended Objects.}\, Reconstructions the phase retrieval scheme using multiple directions for different domains with $\mathbf{z_0}=(4,4)^T$.}
\label{PRexampleS}
\end{figure}

\section{Concluding remarks}
\label{sec:ConcludingRemarks}
\setcounter{equation}{0}
In this paper, we study systematically the inverse elastic scattering problems with phaseless far field data. By considering simultaneously the scattering of point sources, we establish some uniqueness results with phaseless far field data, propose a simple and stable phase retrieval technique and some direct sampling methods for shape reconstructions. The theoretical investigations are then complemented by numerical examples which exploit generated synthetic far-field
data for a variety of surfaces in two dimensions. The elaborated numerical reconstructions reveal that the phase retrieval technique is quite robust to noise and the proposed direct sampling methods are capable of identifying unknown objects  effectively, even only spare data are used.

Numerically, we observe that if the indicator ${\bf I_{\mathbf{z_0}}}\mathbf{(p,d)}$ given in \eqref{Indicator01} is replaced by
\be
&&{\bf \tilde{I}_{\mathbf{z_0}}}\mathbf{(p,d)}\cr
&:=&\sum_{\mathbf{q}\in\mathcal{Q}}\left|\int_{\mathbb{S}} \mathcal {F}_{\bf z_0}(\mathbf{\hx,d,q},\tau_1)\cos[k_s\mathbf{\hx\cdot(p-z_0)-k_s p\cdot d}]ds(\mathbf{\hx})\right|^2,\,\mathbf{d}\in\mathbb{S},\,\mathbf{p}\in\R^2,\qquad
\en
Then the false domain $\Om(\mathbf{z_0})$ disappear surprisingly. Unfortunately, we are not clear of the theory basis for this fact.

We are also interested in the phaseless total fields taken on some measurement surface containing the unknown objects. For the source scattering problems, the phase retrieval technique is also applicable. However, since the point sources are also radiating solutions to the Navier equation, to retrieve the phased data stably, we have to choose the source points close to the measurement surface. We will address this problem in a forthcoming paper.

\section*{Acknowledgement}
The research of X. Ji is partially supported by the NNSF of China with Grant Nos. 11271018 and 91630313,
and National Centre for Mathematics and Interdisciplinary Sciences, CAS.
The research of X. Liu is supported by the NNSF of China under grant 11571355 and the Youth Innovation Promotion Association, CAS.


\begin{thebibliography}{99}
\bibitem{AlaHuLiuSun}
A. Alzaalig, G. Hu, X. Liu and J. Sun,
{\it Fast acoustic source imaging using multi-frequency sparse data},
arXiv:1712.02654, 2017.

\bibitem{AmmariChowZou}H. Ammari, Y.T. Chow and J. Zou, Phased and phaseless domain reconstructions
in the inverse scattering problem via scattering coeffiecients,
{\em SIAM J. Appl. Math. \bf76} (2016), 1000-1030.

\bibitem{AlvesKress2002}
\textsc{C. Alves and R. Kress}, \emph{On the far-field operator in elastic obstacle scattering},
IMA Journal of Applied Mathematics, 67 (2002), 1--21.

\bibitem{Arens2001}
\textsc{T. Arens}, \emph{Linear sampling methods for 2D inverse elastic wave scattering},
Inverse Problems, 17 (2001), 1445--1464.

\bibitem{BaoChenLi}
\textsc{G. Bao, C. Chen and P. Li},
\emph{Inverse random source scattering for elastic waves},
SIAM J. Numer. Anal. 55 (2017), no. 6, 2616-2643.

\bibitem{BaoLiZhao}
\textsc{G. Bao, P. Li and Y. Zhao},
\emph{Stability in the inverse source problem for elastic and electromagnetic waves with multi-frequencies},
arXiv:submit/1829924, 2017

\bibitem{BaoHuSunYin}
\textsc{G. Bao, G. Hu, J. Sun and T. Yin},
\emph{Direct and inverse elastic scattering from anisotropic media},
J. Math. Pures Appl. (9) 117 (2018), 263-301.

\bibitem{BaoXuYin}
\textsc{G. Bao, L, Xu and T. Yin},
\emph{An accurate boundary element method for the exterior elastic scattering problem in two dimensions},
J. Comput. Phys., 348 (2017), 343-363.

\bibitem{BonnetConstantinescu}
\textsc{M.~Bonnet and A.~Constantinescu},
\emph{Inverse problems in elasticity},
Inverse Problems, 21 (2005), R1-R50.

\bibitem{TYG}
\textsc{A. Charalambopoulos, A. Kirsch, K. Anagnostopoulos, D. Gintides and K. Kiriaki},
\emph{The factorization method in inverse elastic scattering from penetrable bodies},
Inverse Problems, 23 (2007), 27--51.

\bibitem{ChenHuang}
\textsc{Z. Chen and G. Huang},
\emph{Reverse time migration for extended obstacles: elastic waves (in Chinese)},
Scientia Sinica Mathematica, 45 (2015), 1103--1114.

\bibitem{ChengHuang-phaseless}
\textsc{Z. Chen and G. Huang},
\emph{ Phaseless imaging by reverse time migration: Acoustic waves,}
Numer. Math. Theor. Meth. Appl. 10 (2017), 1-21.

\bibitem{CK}
\textsc{D. Colton and R. Kress},
\emph{Inverse Acoustic and Electromagnetic Scattering Theory}, (Third Edition) Springer, 2013.

\bibitem{DassiosKiriakiPolyzos}
\textsc{G. Dassios, K. Kiriaki and D. Polyzos},
\emph{On the scattering amplitudes for elastic waves},
Journal of Applied Mathematics and Physics, 38 (1987), 856--873.

\bibitem{DLL}
\textsc{H. Dong, J. Lai and P. Li, }
\emph{Inverse obstacle scattering problem for elastic waves with phased or phaseless far field data,}
arXiv£ºsubmit/2490379, 2018.

\bibitem{DZG}
\textsc{H. Dong, D. Zhang and Y. Guo, }
\emph{A reference ball based iterative algorithm for imaging acoustic obstacle from phaseless far-field data,}
arXiv:1804.05062v1, 2018.

\bibitem{GintidesSini}
\textsc{D. Gintides and M. Sini},
\emph{Identification of obstacles using only the scattered P-waves or the scattered S-waves},
Inverse Problems and Imaging, 6 (2012), 39--55.

\bibitem{GuzinaChikichev}
\textsc{B. Guzina and I. Chikichev},
\emph{From imaging to material identification: A generalized concept of topological sensitivity},
Journal of the Mechanics and Physics of Solids, 55 (2007), 245--279.

\bibitem{HahnerHsiao}
\textsc{P. H\"{a}hner and G. Hsiao},
\emph{Uniqueness theorems in inverse obstacle scattering of elasticwaves},
Inverse Problems, 9 (1993), 525--534.

\bibitem{Hahner98}
\textsc{P. H\"{a}hner},
\emph{On acoustic, electromagnetic and elastic scattering problems in inhomogeneous media},
Habilitation thesis, G\"{o}ttingen, 1998.

\bibitem{HuLiLiuSun}
\textsc{G.  Hu, J. Li, H. Liu and H. Sun},
\emph{Inverse elastic scattering for multiscale rigid bodies with a single far-field pattern},
SIAM J. Imaging Sci. 7 (2014), no. 3, 1799-1825.

\bibitem{HuLiLiu}
\textsc{G.  Hu, J. Li and H. Liu},
\emph{Recovering complex elastic scatterers by a single far-field pattern},
J. Differential Equations 257 (2014), no. 2, 469-489.

\bibitem{HuLiLiuWang}
\textsc{G.  Hu, J. Li, H. Liu and Q. Wang},
\emph{A numerical study of complex reconstruction in inverse elastic scattering},
Commun. Comput. Phys. 19 (2016), no. 5, 1265-1286.

\bibitem{HuKirschSini}
\textsc{G. Hu, A. Kirsch and M. Sini},
\emph{Some inverse problems arising from elastic scattering by rigid obstacles},
Inverse Problems, 29 (2013), 015009.

\bibitem{JiLiuXi}
\textsc{X. Ji, X. Liu and Y. Xi},
\emph{Direct sampling methods for inverse elastic scattering problems},
Inverse Problems, 34 (2018), 035008.

\bibitem{JiLiuZhang-obstacle}
\textsc{X. Ji, X. Liu and B. Zhang},
\emph{Target reconstruction with a reference point scatterer using phaseless far field patterns},
SIAM J. Imaging Sci., minor revision, (2018).

\bibitem{JiLiuZhang-source}
\textsc{X. Ji, X. Liu and B. Zhang},
\emph{Phaseless inverse source scattering problem: phase retrieval, uniqueness and direct sampling methods},
J. Comput. Phys.,minor revision, (2018).


\bibitem{KirschLiuIP14avoid}A. Kirsch and X. Liu.
A modification of the factorization method for the classical acoustic inverse scattering problems,
{\em Inverse Problems \bf30}, (2014), 035013.

\bibitem{Klibanov14}
\textsc{M.V. Klibanov},
\emph{Phaseless inverse scattering problems in three dimensions},
SIAM J. Appl. Math., 74 (2014), 392-410.

\bibitem{Klibanov17}
\textsc{M.V. Klibanov,}
\emph{ A phaseless inverse scattering problem for the 3-D Helmholtz equation, }
Inverse Probl. Imaging 11 (2017), 263-276.

\bibitem{KlibanovRomanov17}
\textsc{M.V. Klibanov and V.G. Romanov,}
\emph{Uniqueness of a 3-D coefficient inverse scattering problem without the phase information,}
Inverse Problems 33 (2017), 095007.

\bibitem{KlibanovRomanov17-SIAM}
\textsc{M.V. Klibanov and V.G. Romanov, }
\emph{Reconstruction procedures for two inverse scattering problems without the phase information,}
SIAM J. Appl. Math. 76 (2016), 178-196.

\bibitem{Kr1}
\textsc{R. Kress,}
\emph{Acoustic scattering: Specific theoretical tools. In: Scattering}
 (R. Pike, P. Sabatier, eds.), Academic Press, London, 2001, 37-51.

\bibitem{KR97}
\textsc{R. Kress  and W. Rundell,}
\emph{ Inverse obstacle scattering with modulus of the far field pattern
as data, In: {\em Inverse Problems in Medical Imaging and Nondestructive Testing},}
Springer-Verlag, New York, 1997, pp. 75-92.

\bibitem{KR99}
\textsc{R. Kress  and W. Rundell,}
\emph{ Inverse obstacle scattering using reduced data,}
SIAM J.Appl.Math. 59 (1999), 442-454.

\bibitem{Kupradze1965}
\textsc{V. Kupradze},
\emph{Potential Mehtods in the Theory of Elasticity},
Jerusalem: Israeli Program for Scientific Translations, 1965.

\bibitem{Kupradze1979}
\textsc{V. Kupradze},
\emph{Three-dimensional Problems of the Mathematical Theory of Elasticity and Thermoelasticity},
Amsterdam: North-Holland Series in Applied Mathematics \& Mechanics, 1979.

\bibitem{LLZ} J. Li, H. Liu and J. Zou, {Strengthened linear sampling method with a reference ball},
{\em SIAM J. Sci. Comput. \bf31} (2009), 4013-4040.

\bibitem{LiWangWangZhao}
\textsc{P. Li, Y. Wang, Z. Wang and Y. Zhao},
\emph{Inverse obstacle scattering for elastic waves},
Inverse Problems, 32 (2016), 115018.


\bibitem{LiuIP17}
\textsc{X. Liu},
\emph{A novel sampling method for multiple multiscale targets from scattering amplitudes at a fixed frequency},
Inverse Problems, 33 (2017), 085011.


\bibitem{LiuSun17}
\textsc{X. Liu and J. Sun},
\emph{Data recovery in inverse scattering problems: from limited-aperture to full-aperture}, J. Comput. Phys. (2018), http://doi.org/10.1016/j.jcp.2018.10.036.

\bibitem{LZball}X. Liu, B. Zhang, Unique determination of a sound soft ball by the modulus of a single
far field datum, {\em J. Math. Anal. Appl. \bf 365} (2009), 619-624.

\bibitem{Mclean}W. Mclean, {\em Strongly Elliptic Systems and Boundary Integral Equation},
Cambridge University Press, Cambridge, 2000.

\bibitem{QinLiu}H. Qin and X. Liu,
The interior inverse scattering problem for cavities with an artificial obstacle,
{\em Appl. Numer. Math. \bf{88}} (2015), 18-30.

\bibitem{Sevroglou}
\textsc{V. Sevroglou},
\emph{The far-field operator for penetrable and absorbing obstacles in 2D inverse elastic scattering},
Inverse Problems, 21 (2005), 717--738.

\bibitem{ZhangGuoLiLiu}
\textsc{D. Zhang, Y. Guo, J. Li and H. Liu,}
{\em Retrieval of acoustic sources from multi-frequency phaseless data},
Inverse Problems, 34 (2018), 094001.

\bibitem{ZG}
\textsc{D. Zhang and Y. Guo,}
{\em Uniqueness results on phaseless inverse scattering with a reference ball,}
Inverse Problems 34 (2018) 085002 (12pp).

\bibitem{XZZ-SIAM}
\textsc{X. Xu, B. Zhang and H. Zhang,}
{\em Uniqueness in inverse scattering problems with phaseless far-field data at a fixed frequency,}
SIAM J. Appl. Math. 78(3) (2018), 1737-1753.

\bibitem{XZZ18}
\textsc{X. Xu, B. Zhang and H. Zhang,}
{\em Uniqueness in inverse scattering problems with phaseless far-field data at a fixed frequency. II.}
SIAM J. Appl. Math. 78(6) (2018), 3024-3039.

\bibitem{ZZ-jcp17}
\textsc{B. Zhang and H. Zhang,}
{\em Recovering scattering obstacles by multi-frequency phaseless far-field data,}
J. Comput. Phys. 345 (2017), 58-73.

\bibitem{ZZ-ip17}
\textsc{B. Zhang and H. Zhang,}
{\em Imaging of locally rough surfaces from intensity only far-field or near-field data,}
Inverse Problems 33 (2017) 055001.

\bibitem{ZZ18}
\textsc{B. Zhang and H. Zhang,}
{\em Fast imaging of scattering obstacles from phaseless far-field measurements at a fixed frequency,}
 arXiv:1805.09046v1, 2018.

\end{thebibliography}
\end{document}